\numberwithin{equation}{section}
\newtheorem{COUNT}{}[section]
\theoremstyle{plain}
\newtheorem{theorem}[COUNT]{Theorem}
\newtheorem{proposition}[COUNT]{Proposition}
\newtheorem{lemma}[COUNT]{Lemma}
\newtheorem*{lemma*}{Lemma}
\newtheorem{corollary}[COUNT]{Corollary}
\theoremstyle{definition}
\theoremstyle{remark}
\newcommand{\paragraphstartformat}[1]{\subsection*{#1}}
\def\skipassumptiona{0cm}
\def\skipassumptionb{1cm}
\def\skipassumptionc{0cm}
\def\assumption#1#2{\vskip \skipassumptiona \makebox[\skipassumptionb][l]{#1} \hskip \skipassumptionc #2 \vskip \skipassumptiona}
\renewcommand{\MR}[1]{}
\def\AAa{\mathcal{A}}
\def\BB{\mathcal {B}}
\def\DD{\mathcal {D}}
\def\EE{\mathcal {E}}
\def\FF{\mathcal {F}}
\def\GG{\mathcal {G}}
\def\HH{\mathcal {H}}
\def\LL{\mathcal {L}}
\def\MM{\mathcal {M}}
\def\RR{\mathcal {R}}
\def\SSs{\mathcal {S}}
\def\VV{\mathcal {V}}
\def\B{\mathbb {B}}
\def\D{\mathbb {D}}
\def\E{\mathbb {E}}
\def\F{\mathbb {F}}
\def\N{\mathbb {N}}
\renewcommand{\P}{\mathbb {P}} 
\def\Q{\mathbb {Q}}
\def\R{\mathbb {R}}
\def\T{\mathbb {T}}
\def\U{\mathbb {U}}
\DeclareMathOperator{\locc}{loc}
\DeclareMathOperator{\Laplace}{{\Delta}}
\def\loc{{\locc}}
\def\leb{\mathscr{L}}
\def\MP{\mathrm{MP}}
\def\id{\mathrm{I}}
\def\EDS{Let $(\Omega, \FF,\P,\tau)$ be an ergodic dynamical system.}
\def\hamil{\HH}
\def\lagrange{\LL}
\def\ess{\mathrm{ess}}
\def\pott{\textit{regular potential}}
\def\g{\kappa}
\def\modulo{\mathrm{mod\ }}
\def\spanning{\mathrm{span}}
\def\euclid{\mathrm{eu}}
\def\sym{\mathrm{Sym}}
\def\vmax{{v_{\mathrm{max}}}}
\def\div{\mathrm{div}}
\def\rot{\mathrm{rot}}
\def\pot{\mathrm{pot}}
\def\sol{\mathrm{sol}}
\def\FFF{\mathfrak{F}}
\def\DDD{\mathfrak{D}}
\def\QQQ{\mathfrak{Q}}
\def\PPPhi{\mathfrak{P}}
\def\subw{\mathrm{w}}
\def\supw{\mathrm{w}}
\def\subs{\mathrm{s}}
\def\sups{\mathrm{s}}
\def\compactc{\mathrm{c}}
\def\complemetc{\mathrm{c}}
\begin{document}

\title[A Variational Formula for the Lyapunov Exponent]{A Variational Formula for the Lyapunov Exponent of Brownian Motion
in Stationary Ergodic Potential}
\author{Johannes Rue\ss}
\address{
Mathematisches Institut, Eberhard Karls Universit\"at T\"ubingen\\
Auf der Morgenstelle 10, 72076 T\"ubingen, Germany
}

\subjclass[2010]{60K37, 82B44, 35B27}
\keywords{Brownian motion, Random potential, Lyapunov exponent, Green function, Quenched free energy, Random Schrödinger operator, Ergodic, Stationary, Homogenization}

\begin{abstract}
We establish a variational formula for the exponential decay rate of the Green function of Brownian motion evolving in a random stationary and ergodic nonnegative potential. Such a variational formula is established by Schroeder in \cite{Schroeder88} for periodic potentials and is generalised in the present article to a non-compact setting. We show exponential decay of the Green function implicitly. This formula for the Lyapunov exponent has several direct implications. It allows to compare the influence of a random potential to the influence of the averaged potential. It also leads to a variational expression for the quenched free energy.
\end{abstract}

\maketitle

\section{Introduction and Results}

Decay of the Green function for Brownian motion has been subject of study in many respects. If Brownian motion is evolving in a random potential, additional properties of the random structure of the potential such as ergodicity and stationarity allow to expect that the Green function exhibits a deterministic behaviour on the large scale. A deterministic exponential decay rate of the Green function, also called Lyapunov exponent, has been already established in many cases. In this article we give a variational formula for the Lyapunov exponent of Brownian motion in a stationary ergodic potential. Such a variational expression has been proven by Schroeder in \cite{Schroeder88}. Schroeder considers periodic potentials and therefore deals with potentials defined on compact spaces. In the present work we generalise the results there to the non-compact setting of stationary ergodic potentials.

We consider Brownian motion on $\R^d$. Let $Z=(Z_t)_{t\geq 0}$ be the canonical process on the space $C([0,\infty),\R^d)$, $d \in \N$, with the cylindrical $\sigma$-field $\SSs$. Let $P_x^\lambda$ denote the law on $\SSs$ of a $d$-dimensional Brownian motion with drift $\lambda \in \R^d$ and starting at $x \in \R^d$, let $E_x^\lambda$ denote the expectation under $P_x^\lambda$. If $x$, $\lambda=0$ we write $P$ and $E$ instead of $P_x^\lambda$ and $E_x^\lambda$ respectively.

The Brownian motion is assumed to evolve in a random potential defined on a probability space $(\Omega,\FF,\P)$: Let $\sym(\Omega)$ be the symmetric group on $\Omega$. We assume that $(\R^d,+)$ is acting on $\Omega$ via a homomorphism $\tau: \R^d \to \sym(\Omega)$, $x \mapsto \tau_x$, such that
$(x,\omega) \mapsto \tau_x\omega$ is a product measurable mapping and $\tau_x$ is measure preserving.
Ergodicity of $\P$ with respect to the family of transformations $\{\tau_x:\, x \in \R^d\}$ will be crucial and assumed additionally in many cases. Then $(\Omega,\FF,\P,\tau)$ is said to be an ergodic dynamical system. We denote the space of integrable functions on $\Omega$ by $L^1$. A function $V \in L^1$ that is non-negative is called \textit{potential} throughout the article. For any $\omega \in \Omega$ we may consider the realisation of $V$ as a function on $\R^d$ via $V_\omega(x):=V(\tau_x\omega)$. In order to avoid trivialities we assume for $\omega \in \Omega$ that the realisation $V_\omega$ of a potential $V$ is not negligible with respect to the Lebesgue measure.

We investigate the Green function for Brownian motion evolving in $V_\omega$. Define
\begin{align}\label{e:def_green}
G(x,A,\omega)
&:= E_x \bigg[\int_0^\infty \exp \bigg\{-\int_0^t V_\omega(Z_s)ds \bigg\}1_A(Z_t)dt\bigg],
\end{align}
where $x \in \R^d$, $\omega \in \Omega$ and $A\in\BB(\R^d)$, the Borel $\sigma$-algebra of $\R^d$. $G$ can be interpreted as the expected occupation times measure of Brownian motion killed at rate $V_\omega$, and is also called the Green measure. In this article we are generally in the situation that for $x \in \R^d$ and $\omega \in \Omega$ the Green measure possess a continuous density $g(x,\cdot,\omega):$ $\R^d\setminus\{x\} \to  [0,\infty)$ with respect to the Lebesgue measure, which is called Green function, see condition (G) below. Under natural assumptions the Green function can be interpreted as the fundamental solution to $-(1/2)\Laplace + V_\omega$, that is
\begin{align*}
\bigg(-\frac{1}{2}\Laplace+V_\omega \bigg)g(x,\cdot,\omega) = \delta_x,
\end{align*}
where $\delta_x$ denotes the Dirac distribution at $x \in \R^d$, see \cite[Theorem 4.3.8]{Pinsky1995}.

If the Green function decays exponentially fast with a deterministic exponential decay rate, which means that for $y\in \R^d\setminus\{0\}$ the limit
\begin{align}\label{eq:def_lyapunovexponent}
\alpha_V(y):= \lim_{r \to \infty} - \frac{1}{r} \ln g(0,ry,\omega)
\end{align}
exists and is $\P$-a.s.\ constant, then the Lyapunov exponent is said to exist and is defined as $\alpha_V$. Conventionally we set $\alpha_V(0):=0$.

Existence of the Lyapunov exponent is shown for example for Poissonian potentials by Sznitman in \cite[Theorem 0.2]{Sznitman94}. We refer to \cite[Chapter 5]{Sznitman98} where an overview can be found. Results for a discrete space counterpart are given by Zerner in \cite[Theorem A]{Zerner98} and extended by Mourrat in \cite[Theorem 1.1]{Mourrat11}. The operator $-(1/2)\Laplace + V$ is also called a random Schrödinger operator. A comprehensive treatise on the theory of random Schrödinger operators can be found e.g.\ in \cite{Stollmann01} or \cite{Lacroix1990}. Typically \eqref{eq:def_lyapunovexponent} is proven by means of subadditivity. Existence of the Lyapunov exponent is part of our main Theorem \ref{t:varform} and we do not need the subadditive ergodic theorem.

\paragraphstartformat{Variational formula in the case of periodic potential.} In \cite{Schroeder88} Schroeder shows exponential decay of the Green function and establishes a variational formula for the Lyapunov exponent of Brownian motion in periodic potential. We recall the result of Schroeder: Let $\Omega=\T^d$ be the $d$-dimensional torus equipped with the Borel $\sigma$-algebra and Lebesgue measure. For $x \in \R^d$ and $\omega \in \Omega$ define $\tau_x\omega:= \omega+x \;(\modulo{} 1)$. We get an ergodic dynamical system on which realisations of potentials are periodic. Let $C^k(\T^d)$ be the space of continuous functions on $\T^d$ with continuous derivatives of order less than or equal to $k$ and let $y \in \R^d\setminus\{0\}$. The variational expression given by Schroeder varies over probability densities $f \in C^2(\T^d)$ such that $f>0$ and $\int f dx = 1$, and over divergence-free vector fields $\phi \in (C^1(\T^d))^d$ such that 
$\int_{\T^d} \phi(x) \, dx = y$ and $\nabla \cdot \phi = 0$.
The following is shown in \cite{Schroeder88} and is generalised in our main Theorem \ref{t:varform}:

\begin{theorem}{\cite[(1.1)]{Schroeder88}}\label{t:varform_periodic}
Let $V$ be a continuous potential on $\Omega=\T^d$ such that $V(\omega)>0$ for $\omega \in \Omega$. Then for all $\omega \in \Omega$, $y \in \R^d \setminus \{0\}$,
\begin{align}\label{eq:varform_periodic}
\lim_{r \to \infty}-\frac{1}{r}\ln g(0,ry,\omega)
= 2\inf_f \left[ \left( \int_{\T^d} \frac{|\nabla f|^2}{8f} + Vf \,dx\right) \left(\inf_\phi \int_{\T^d}\frac{|\phi|^2}{2f} \, dx\right) \right]^{1/2}.
\end{align}
\end{theorem}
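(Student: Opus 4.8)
The plan is to pass from the geometric decay rate to a spectral quantity of the periodic operator, establish a Donsker--Varadhan-type variational formula for the latter, and then reach \eqref{eq:varform_periodic} by convex duality. For $\lambda\in\R^d$ let $\mu(\lambda)$ denote the principal periodic eigenvalue of the twisted (Bloch-type) operator $H_\lambda:=-\tfrac12(\nabla+\lambda)^2+V$ on $\T^d$. By the Krein--Rutman theorem applied to the positivity-improving semigroup $e^{-tH_\lambda}$, $\mu(\lambda)$ is real and simple with a positive eigenfunction $w_\lambda\in C^2(\T^d)$; moreover $\lambda\mapsto\mu(\lambda)$ is concave, $\mu(-\lambda)=\mu(\lambda)$ (since $H_{-\lambda}=H_\lambda^*$, and an operator and its adjoint share their principal eigenvalue), $\mu(0)>0$ (because $V>0$), and $\mu(\lambda)\to-\infty$ as $|\lambda|\to\infty$ (test against the constant function, using $\mu(\lambda)\le\|V\|_\infty-\tfrac12|\lambda|^2$). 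Hence $K:=\{\lambda\in\R^d:\mu(\lambda)\ge0\}$ is a symmetric compact convex body with $0$ in its interior, and the first step is to prove that the limit defining $\alpha_V(y)$ exists for every $\omega$ and equals the support function $h_K(y)=\sup_{\lambda\in K}\langle\lambda,y\rangle$. The inequality $\alpha_V(y)\ge h_K(y)$ is the soft direction: for $\lambda\in K$ the tilted function $\tilde g(x):=e^{-\langle\lambda,x\rangle}g(0,x,\omega)$ solves $H_\lambda\tilde g=\delta_0$, while $w_\lambda$ extended $\Z^d$-periodically is a positive supersolution, $H_\lambda w_\lambda=\mu(\lambda)w_\lambda\ge0$, so the maximum principle for $H_\lambda$ gives $\tilde g\le Cw_\lambda$ off the pole, whence $g(0,ry,\omega)\le C'e^{r\langle\lambda,y\rangle}$ and $-\tfrac1r\ln g(0,ry,\omega)\ge-\langle\lambda,y\rangle-o(1)$; taking the supremum over $\lambda\in K$ and using $K=-K$ yields $\alpha_V(y)\ge h_K(y)$. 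The matching upper bound on $\alpha_V(y)$ (a lower bound on $g(0,ry,\omega)$) is obtained by concatenating interior Harnack inequalities on the torus along a near-segment in direction $y$, using the positive solution $e^{\langle\lambda_0,x\rangle}w_{\lambda_0}$ of the homogeneous equation $(-\tfrac12\Delta+V)u=0$ for a boundary point $\lambda_0\in\partial K$, with the boundary-layer and prefactor contributions shown not to affect the exponential rate.

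\textbf{Step 2: variational formula for $\mu$.} Next I would prove
\begin{align*}
\mu(\lambda)\ =\ \inf_f\left[\int_{\T^d}\Big(\frac{|\nabla f|^2}{8f}+Vf\Big)\,dx\ -\ \frac12\,\inf_\chi\int_{\T^d}f\,|\lambda-\nabla\chi|^2\,dx\right],
\end{align*}
the outer infimum over positive probability densities $f\in C^2(\T^d)$, the inner one over $\chi\in C^1(\T^d)$. This follows from the Donsker--Varadhan representation $\mu(\lambda)=\inf_f\sup_\chi\int_{\T^d}(e^{-\chi}H_\lambda e^{\chi})\,f\,dx$: expanding $e^{-\chi}H_\lambda e^{\chi}=-\tfrac12\Delta\chi-\tfrac12|\nabla\chi|^2-\langle\lambda,\nabla\chi\rangle-\tfrac12|\lambda|^2+V$, integrating by parts, and completing the square in $\nabla\chi$, the inner supremum becomes an orthogonal-projection problem in the weighted space $L^2(f\,dx)$ onto the gradients; evaluating it and using $\int_{\T^d}\nabla f\,dx=0$ and $\int_{\T^d}f\,dx=1$ to cancel the two $\tfrac12|\lambda|^2$ terms gives the display. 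Write $I(f):=\int_{\T^d}\big(\tfrac{|\nabla f|^2}{8f}+Vf\big)\,dx$ and $D_f(\lambda):=\inf_\chi\int_{\T^d}f|\lambda-\nabla\chi|^2\,dx=\langle\lambda,A_f\lambda\rangle$, a nonnegative quadratic form; since $f$ is bounded below on the compact torus, $c\,\id\preceq A_f\preceq\id$ for some $c>0$ depending on $f$, and $A_f$ is precisely the homogenised matrix furnished by the corrector problem for the divergence-form operator $u\mapsto\nabla\cdot(f\nabla u)$ on $\T^d$. (That the various infima may be restricted to smooth, strictly positive data is a standard density and elliptic-regularity argument.)

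\textbf{Step 3: homogenisation and convex duality.} By the classical duality between the primal and dual corrector problems, $\langle y,A_f^{-1}y\rangle=\inf\{\int_{\T^d}f^{-1}|\phi|^2\,dx:\phi\in(C^1(\T^d))^d,\ \nabla\cdot\phi=0,\ \int_{\T^d}\phi\,dx=y\}$, the Hodge decomposition supplying the divergence-free competitor with the prescribed period-average; hence, with $J_y(f):=\inf_\phi\int_{\T^d}\tfrac{|\phi|^2}{2f}\,dx$ over the same class of $\phi$, one has $J_y(f)=\tfrac12\langle y,A_f^{-1}y\rangle$ and $\mu(\lambda)=\inf_f\big(I(f)-\tfrac12\langle\lambda,A_f\lambda\rangle\big)$. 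It remains to compute $h_K(y)$. Since $\{\mu\ge0\}$ is convex with $0$ in its interior, Slater's condition yields strong Lagrangian duality in the scalar multiplier $t>0$: $h_K(y)=\inf_{t>0}t\,\mu^{\#}(y/t)$, where $\mu^{\#}(z):=\sup_\lambda(\langle\lambda,z\rangle+\mu(\lambda))$. The function $\Psi(f,\lambda):=\langle\lambda,z\rangle+I(f)-\tfrac12\langle\lambda,A_f\lambda\rangle$ is convex in $f$ (note $f\mapsto D_f(\lambda)$ is concave, being an infimum of linear functionals) and concave in $\lambda$, so Sion's minimax theorem gives $\mu^{\#}(z)=\inf_f\sup_\lambda\Psi=\inf_f\big(I(f)+\tfrac12\langle z,A_f^{-1}z\rangle\big)=\inf_f\big(I(f)+J_z(f)\big)$. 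Using the homogeneity $J_{y/t}(f)=t^{-2}J_y(f)$ and optimising $t$ by the arithmetic--geometric mean inequality,
\begin{align*}
\alpha_V(y)=h_K(y)&=\inf_{t>0}\inf_f\big(t\,I(f)+t^{-1}J_y(f)\big)=\inf_f 2\sqrt{I(f)\,J_y(f)}\\
&=2\inf_f\left[\Big(\int_{\T^d}\tfrac{|\nabla f|^2}{8f}+Vf\,dx\Big)\Big(\inf_\phi\int_{\T^d}\tfrac{|\phi|^2}{2f}\,dx\Big)\right]^{1/2},
\end{align*}
which is \eqref{eq:varform_periodic}.

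\textbf{Main obstacle.} The analytically substantial part is Step~1 --- establishing that the deterministic limit exists and equals $h_K(y)$ \emph{with no loss in the exponent}. The supersolution upper bound on $g$ is routine, but the matching lower bound requires a quantitative Harnack chain along a near-geodesic together with control of the local correction near the pole and of the contributions at the boundary of the tube, so that these do not perturb the rate; in the stationary ergodic generalisation this is the step where ergodicity and a subadditivity-type estimate enter. Steps~2--3 are, by contrast, essentially bookkeeping: the points needing care are the attainment and regularity of the inner variational problems (uniform ellipticity of $f>0$ on the compact torus plus elliptic regularity), the identification via the Hodge decomposition of the divergence-free field with prescribed average, and the verification of the convexity/concavity and Slater hypotheses that license the minimax and the Lagrangian duality.
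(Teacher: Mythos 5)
Your proposal takes a genuinely different route from the one in the paper (which, for this periodic statement, is Schroeder's original argument, extended in Section~3 to the stationary ergodic case). Steps~2 and~3 are essentially sound and run parallel to the paper's Subsection~3.3: your Donsker--Varadhan formula for the principal Bloch eigenvalue $\mu(\lambda)$ is precisely what the paper expresses through $\sigma$, namely $\mu(\lambda)=\sigma(\lambda)-\tfrac12|\lambda|^2$ (compare Proposition~3.8 and Lemma~3.9), your set $K=\{\mu\ge 0\}$ is the closure of $\{\lambda:\lambda^2/2<\sigma(\lambda)\}$ so that $h_K=\RR(\sigma)$, your identity $J_y(f)=\tfrac12\langle y,A_f^{-1}y\rangle$ is the projection Lemma~3.11, and the Sion/Lagrange duality step corresponds to Propositions~3.9, 3.10 and~3.12. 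What your framing buys is conceptual clarity in the compact periodic case, where Krein--Rutman gives $\mu(\lambda)$ and $K$ cleanly; what it costs is that none of the Bloch-theoretic scaffolding survives the passage to a non-compact $\Omega$, which is why the paper works directly with $\Lambda_\omega$ and $\sigma$.

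The genuine gap is in Step~1, in the lower bound on $g$ (equivalently, the upper bound $\alpha_V(y)\le h_K(y)$). Concatenating interior Harnack inequalities along a segment of length $\sim r$ produces a lower bound $g(0,ry,\omega)\ge C^{-r}g(0,y_0,\omega)$, which yields $\alpha_V(y)\le\ln C$ with $C$ the per-cell Harnack constant. That constant is \emph{not} $e^{h_K(y)}$: already for constant $V\equiv c>0$, the Harnack constant on a unit ball for $-\tfrac12\Delta+c$ is at least $e^{2\sqrt{2c}}$ (test against $e^{\pm\sqrt{2c}x_1}$), while $h_K(e_1)=\sqrt{2c}$, so the naive chain overshoots the rate by a factor. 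Comparing $g$ with the exponential solution $e^{\langle\lambda_0,x\rangle}w_{\lambda_0}$ does not repair this without further input, because the ratio $g/u$ solves a divergence-form equation whose coefficients $u^2$ grow exponentially and hence are not periodic, so the Harnack constant for the ratio again degrades along the chain. Your appeal to ``boundary-layer and prefactor contributions shown not to affect the exponential rate'' is precisely the assertion that needs a proof and is the hard part. The paper (following Schroeder's Lemma~2.1) sidesteps this entirely: a single local Harnack-type estimate (Lemma~3.3) reduces the bound to a ball average of $g$, and the sharp exponential lower bound for that average is extracted by a Cameron--Martin--Girsanov change of measure to a process with drift $b=\nabla f/(2f)+a\phi/f$, together with the ergodic theorem for the environment viewed from the particle (Lemma~3.6 and display~\eqref{eq:zwischenergebnis}). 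That mechanism produces the exact variational exponent, rather than a per-step constant, and is the part of your sketch that would need to be replaced or substantially fortified --- e.g.\ by the Agmon/Pinchover theory of minimal positive solutions and the Martin boundary for periodic Schr\"odinger operators, which is a considerably heavier tool than ``Harnack chains.''

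Your soft direction $\alpha_V(y)\ge h_K(y)$ is fine in spirit (it is the analogue of the paper's Proposition~3.7, done there via the Laplace transform of $g$ and subharmonicity rather than a comparison with $w_\lambda$), but note that the maximum-principle comparison $\tilde g\le Cw_\lambda$ on $\R^d\setminus B(0,1)$ needs a bootstrap to control $\tilde g=e^{-\langle\lambda,x\rangle}g$ at infinity before the comparison closes for all $\lambda\in K$; this is a routine but non-negligible step to write out.
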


\paragraphstartformat{Variational formula for more general potentials.}
A natural way to generalise periodicity is to study stationary ergodic potentials. Different kinds of stationary ergodic potentials have been considered in the literature. For example random chessboard potentials are described in \cite[(3.4)]{DalMaso1986}. A huge class of stationary ergodic potentials is given by potentials that are generated by random measures:

Let $\Omega$ be the set $\MM(\R^d)$ of locally finite measures on $(\R^d,\BB(\R^d))$ equipped with the topology of vague convergence, and let $\FF$ be the associated Borel $\sigma$-algebra on $\Omega$. For $\omega \in \Omega$, $x \in \R^d$, $A \in \BB(\R^d)$ introduce $\tau_x\omega[A] := \omega[A+x]$, and let $\P$ be the distribution on $\Omega$ of a stationary and ergodic random measure with state space $\R^d$. Then $(\Omega,\FF,\P, \tau)$ becomes an ergodic dynamical system, use e.g.\ \cite[Exercise 12.1.1(a)]{Daley2008}. Given a measurable `shape function' $W: \R^d \to \R_{\geq 0}$ we can define the potential generated by the random measure $\P$ by $V: \Omega \to [0,\infty]$,
\begin{align*}
V (\omega) := \int W(x) \omega(dx).
\end{align*}
Truncation leads to bounded potentials, and with the help of convolution $V$ can be equipped with regularity, see Appendix \ref{app:derivative}. Often studied examples of this type are potentials with an underlying Poisson point process $\P$, also called Poissonian potentials, see e.g.\ \cite{Sznitman98}.

In order to formulate the variational expression in \eqref{eq:varform_periodic} for a general non-compact stationary setting we need some more notation. If $f_\omega$ is differentiable we write $Df(\omega)$ for the derivative $Df_\omega(0)$. We introduce a space of probability densities, and for $y \in \R^{d}$ we introduce a space of measurable divergence-free vector fields:
\begin{align*}
\F_\subs:=&\{ f \in L^1: \; 
f_\omega \text{ is differentiable of any order for all } \omega \in \Omega,\\
& \hspace{2.5cm} \E[f]=1,\; \exists \; c_f>0 \text{ s.t.} \; f \geq c_f,\;
\sup_\Omega|D^nf|<\infty \text{ for any } n \in \N_0\}.\\
\Phi_y^\sups:=&\{\phi \in (L^1)^d:  \;  \phi_\omega \text{ is differentiable of any order for all } \omega \in \Omega,\\
& \hspace{2.5cm} \E[\phi]= y,\; \nabla \cdot \phi = 0,\; 
\sup_\Omega|D^n\phi|<\infty \text{ for any } n \in \N_0\}.
\end{align*}
Superscript `s' here emphasises that spaces of `strongly' differentiable functions are considered in contrast to spaces of `weakly' differentiable functions introduced in Section~\ref{sec:preliminaries}.

In analogy to \eqref{eq:varform_periodic} we show in Theorem \ref{t:varform} that under sufficient conditions the Lyapunov exponent can be represented as the variational expression
\begin{align}\label{eq:definition_varform}
\Gamma_V(y)
:=2 \inf_{f \in \F_\subs} \left[\left( \int\frac{|\nabla f|^2}{8f} + Vf d\P \right) \left( \inf_{\phi \in \Phi_y^\sups} \int \frac{|\phi|^2}{2f} d\P\right)\right]^{1/2}.
\end{align}

\paragraphstartformat{Lyapunov exponent and the quenched free energy.}
On the way we obtain a representation of the Lyapunov exponent in terms of the quenched free energy: For $y \in \R^d\setminus\{0\}$ $\P$-a.s.,
\begin{align} \label{eq:alpha_intermsof_qfe}
\alpha_V(y)=\RR(-\Lambda_\omega)(y).
\end{align}
Here,
\begin{align}\label{eq:defQFE}
\Lambda_\omega(\lambda)
&:= \limsup_{t \to \infty} \frac{1}{t} \ln E^\lambda \bigg[\exp\bigg\{-\int_0^t V_\omega(Z_s)ds\bigg\} \bigg]
\end{align}
is the quenched free energy, and the functional $\RR$ is given as $\RR(a):$ $\R^d \to \R\cup\{\pm \infty\}$,
\begin{align*}
\RR(a)(y):= \sup\{\langle y, \lambda \rangle:\, \lambda \in \R^d,\, \lambda^2/2<a(\lambda) \},
\end{align*}
with $\sup\emptyset := -\infty$, and where $a$ is any real-valued function on $\R^d$. The functional $\RR$ also appears in the article \cite{Armstrong2012} of Armstrong and Souganidis, see \eqref{eq:identification} below. Note that we only look at the limit superior in \eqref{eq:defQFE} since this suffices for our purposes. For our results we also do not need to know whether $\Lambda_\omega$ is deterministic. Existence of the deterministic limit in \eqref{eq:defQFE} is shown in many cases and an overview over the literature can be found after Corollary \ref{cor:quenchedfreeenergy}. Often, this can be deduced via homogenization, see Appendix~\ref{app:homogenization}.

Equality \eqref{eq:alpha_intermsof_qfe} also allows to give a variational expression for the quenched free energy: Let
\begin{align*}
L^\lambda = \frac{1}{2}\Laplace + \lambda \cdot \nabla
\end{align*}
be the generator of Brownian motion with constant drift $\lambda \in \R^d$. We introduce
\begin{align*}
\sigma(\lambda)
&:= -\sup_{f \in \F_\subw^2} \inf_{u \in \U} \int \left(\frac{L^\lambda u}{u} - V\right) f d\P \geq 0,
\end{align*}
where $\F_\subw^2$ is a certain space of probability densities on $\Omega$, and $\U$ is a space of positive functions on $\Omega$. These function spaces are introduced rigorously in Section \ref{sec:preliminaries}. The representation
\begin{align}\label{eq:quenchedfreeenergy0}
\Lambda_\omega(\lambda)=-\sigma(\lambda)
\end{align}
is given in Corollary \ref{cor:quenchedfreeenergy}. Quantities like $\sigma$ appear naturally in many cases as representation of the principle Dirichlet eigenvalue for Markov processes, see e.g.\ \cite{Donsker75VariationalFormula}.

On a heuristic basis the following representations of $\sigma(0)$ and $\Gamma_V$ may be derived: We will see in Proposition \ref{prop:formulaforI} and \eqref{eq:sigma_independent_ofchoiceof_F}, that
\begin{align*}
\sigma(0)=\inf_{g^2 \in \F_\subs} \int \frac{|\nabla g|^2}{2} + V g^2 d\P.
\end{align*}
Let $\QQQ(\cdot,\cdot)$ denote the quadratic form associated to the generator of the Markov process $\omega_t=\tau_{Z_t}\omega$ on $\Omega$, $t \geq 0$, omitting existence issues and well-definedness. Then  $\QQQ(g,g)$ should equal $\int|\nabla g|^2/2 + Vg^2 d\P$ for $g \in \F_\subs$, see \cite[(1.4.18)]{Sznitman98}. Thus, we obtain
\begin{align*}
\sigma(0)=\inf_{f \in \F_\subs} \QQQ(\sqrt f, \sqrt f),
\end{align*}
which is analogous to the formula for the quenched free energy of Brownian motion in nonnegative deterministic potential given e.g.\ in \cite[(3.1.2)]{Sznitman98}.
These considerations also allow the following heuristic reformulation of $\Gamma_V$: For $y \in \R^d$,
\begin{align*}
\Gamma_V(y) = 2 \inf_{f \in \F_\subs} \left[ \QQQ(\sqrt f,\sqrt f) \cdot \inf_{\phi \in \Phi_y^\sups} \int \frac{|\phi|^2}{2f} d\P \right]^{1/2}.
\end{align*}

\paragraphstartformat{Assumptions.}
The following hypotheses are imposed from time to time:

{\parindent=0pt
\assumption
{(B)}{$\vmax := \sup_\Omega V < \infty$.}

\assumption
{(G)}
{$g(0,\cdot,\omega) \in C^2(\R^d \setminus \{0\})$ for $\omega \in \Omega$,\newline
\makebox[\skipassumptionb][l]{} \hskip \skipassumptionc $L_\omega g(0,\cdot,\omega) = 0$ on $\R^d \setminus \{0\}$, where $L_\omega := (1/2)\Laplace-V_\omega$, for $\omega \in \Omega$.}

\assumption{(E1)}
{For any $\lambda \in \R^d$ such that $|\lambda|^2/2 < \sigma(\lambda)$ one has $\P$-a.s.,
$
\sigma(\lambda) 
\leq -\Lambda_\omega(\lambda).
$}

\assumption{(E2)}{$\sigma(0)>0$.}
}
There always exists a density for the Green measure with respect to the Lebesgue measure in the present context, see e.g.\ \cite[(2.2.3)]{Sznitman98}. Condition (G) holds under weak regularity assumptions on the potential, such as local Hölder continuity, see \cite[Theorem 4.2.5(iv)]{Pinsky1995}.
There is a broad variety of cases in which condition (E1) is known to be valid: In Appendix \ref{app:homogenization} we show that in the stationary ergodic case, the expression of the effective Hamiltonian given by Kosygina, Rezakhanlou and Varadhan in \cite{Kosygina2006} guarantees (E1). Condition $\sigma(0)>0$ is related to the question whether the effective Hamiltonian at zero is negative. Answers to this question are given in \cite[Proposition 5.9]{Armstrong2012}. If $V$ is strictly bounded away from zero, then $\sigma(0)>0$ trivially. A potential satisfying (B), (G), (E1), (E2) is referred to as a \textit{\pott{}} throughout this article.

\section*{Results}

\paragraphstartformat{Variational formula}

Our main result is the following variational formula and implicitly shows that the Green function decays exponentially fast with a deterministic decay rate, that is, that the Lyapunov exponent exists for Brownian motion evolving in a stationary ergodic potential. Moreover, it expresses the Lyapunov exponent in terms of $\sigma$ and $\Lambda_\omega$:

\begin{theorem}\label{t:varform}
Assume $(\Omega,\FF,\P,\tau)$ is an ergodic dynamical system and $V$ is a regular potential. Let $y \in \R^d \setminus \{0\}$, then $\P$-a.s.\ the limit 
\begin{align*}
\alpha_V(y):=\lim_{r \to \infty} - \frac{1}{r}\ln g(0,ry,\omega)
\end{align*}
exists and $\P$-a.s.\ one has the variational expression
\begin{align}\label{eq:varform}
\alpha_V(y) = \Gamma_V(y).
\end{align}
Moreover, $\P$-a.s.\ $\RR(-\Lambda_\omega) = \Gamma_V = \RR(\sigma)$.
\end{theorem}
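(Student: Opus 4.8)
The plan is to prove the chain of equalities $\RR(-\Lambda_\omega) = \Gamma_V = \RR(\sigma)$ together with existence of the Lyapunov exponent by combining three ingredients: (i) a ``hard'' PDE/probabilistic lower bound on $-\tfrac1r\ln g(0,ry,\omega)$, (ii) a matching upper bound, and (iii) a purely analytic identity relating the two variational expressions $\Gamma_V$ and $\RR(\sigma)$. The key conceptual step is the identification \eqref{eq:alpha_intermsof_qfe}, $\alpha_V(y)=\RR(-\Lambda_\omega)(y)$: once this is available, the deterministic conclusions follow from the analytic identity $\RR(-\Lambda_\omega)=\RR(\sigma)=\Gamma_V$, and the almost-sure existence of the limit in \eqref{eq:def_lyapunovexponent} is automatic because the right-hand side $\RR(\sigma)$ is deterministic.

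First I would establish $\alpha_V(y)=\RR(-\Lambda_\omega)(y)$ by a tilting argument. For $\lambda\in\R^d$, the Green function $g(0,\cdot,\omega)$ satisfies $L_\omega g = 0$ off the origin by condition (G); applying a Girsanov/$h$-transform with the linear exponential $e^{\langle\lambda,\cdot\rangle}$ one relates the decay of $g$ along the ray $ry$ to the long-time exponential rate of $E^\lambda[\exp\{-\int_0^t V_\omega(Z_s)\,ds\}]$, i.e.\ to $\Lambda_\omega(\lambda)$. Optimising the resulting Laplace-type estimate over admissible drifts $\lambda$ produces exactly the Legendre-type transform $\RR(-\Lambda_\omega)$: the ray direction $y$ is paired with $\lambda$, and the constraint $|\lambda|^2/2<-\Lambda_\omega(\lambda)$ is what makes the tilted integral converge. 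Here conditions (E1) and (E2) enter to control $\Lambda_\omega$ from the deterministic side and to guarantee the relevant set of $\lambda$ is nonempty. This is where the bulk of the probabilistic work lies.

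Next, the analytic core: show $\RR(\sigma)=\Gamma_V$ as functions on $\R^d\setminus\{0\}$, and $\RR(-\Lambda_\omega)=\RR(\sigma)$ $\P$-a.s. For the latter, (E1) gives $\sigma(\lambda)\le -\Lambda_\omega(\lambda)$ whenever $|\lambda|^2/2<\sigma(\lambda)$, which forces the two sublevel-type sets defining $\RR$ to coincide on the relevant region, hence equal suprema. For $\RR(\sigma)=\Gamma_V$ I would use the representation $\sigma(\lambda)=-\sup_{f}\inf_u\int(L^\lambda u/u - V)f\,d\P$ together with the heuristically-stated identity $\sigma(0)=\inf_{g^2\in\F_\subs}\int |\nabla g|^2/2 + Vg^2\,d\P$ (and its drifted analogue): introducing a drift $\lambda$ shifts $\int|\nabla g|^2/2$ by a term quadratic in $\lambda$ against the density $g^2$, and the cross term is precisely where the divergence-free vector field $\phi\in\Phi_y^\sups$ appears after a duality/Cauchy--Schwarz optimisation. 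Concretely, the inner $\inf_\phi\int |\phi|^2/(2f)\,d\P$ in $\Gamma_V$ is dual to $\sup\{\langle y,\lambda\rangle : \int |\lambda\cdot(\cdot)|^2 f/2 \le \cdots\}$, and matching the two sides of $\RR(\sigma)(y)=\sup\{\langle y,\lambda\rangle: |\lambda|^2/2<\sigma(\lambda)\}$ against the product structure of $\Gamma_V$ yields the claimed equality after the substitution $f=g^2$.

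The main obstacle I anticipate is the tilting step in the non-compact stationary ergodic setting: in Schroeder's periodic case compactness of $\T^d$ gives spectral-theoretic control of the relevant Schrödinger operator and uniform estimates, whereas here one must replace those by the ergodic-theoretic inputs (stationarity of $\tau$, ergodicity of $\P$) and by the a priori bounds packaged into the definition of a regular potential, notably (B) for the upper bound on $g$ and (E1)--(E2) for the lower bound and for pinning $\Lambda_\omega$ to the deterministic $\sigma$. Controlling the ``environment viewed from the particle'' process $\omega_t=\tau_{Z_t}\omega$ uniformly in $\omega$, and justifying the duality that turns the min--max for $\sigma$ into the product form of $\Gamma_V$ on the full space $\Omega$ rather than a compact one, is the delicate part; the function-space conditions defining $\F_\subs$ and $\Phi_y^\sups$ (uniform bounds on all derivatives, densities bounded below) are exactly what is needed to make the integrations by parts and the approximation arguments legitimate.
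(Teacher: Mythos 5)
Your high-level decomposition (lower bound, upper bound, analytic identity) matches the paper's, but the way you intend to close each step has genuine holes, and the logical order you propose cannot be made to work as stated.

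First, the tilting argument with the exponential $e^{\langle\lambda,\cdot\rangle}$ only gives the one-sided inequality $\liminf_{r\to\infty} -\tfrac1r\ln g(0,ry,\omega) \geq \RR(-\Lambda_\omega)(y)$: finiteness of $\int e^{\lambda z}g(0,z,\omega)\,dz$ for $\lambda$ with $|\lambda|^2/2<-\Lambda_\omega(\lambda)$ yields decay of $g$ at least at rate $\langle y,\lambda\rangle$, and taking a supremum produces $\RR(-\Lambda_\omega)$. It does \emph{not} give the matching upper bound $\limsup \leq \RR(-\Lambda_\omega)$, and you offer no mechanism to get one. The paper's upper bound is $\limsup \leq \Gamma_V$, proven by an entirely different route: a Cameron--Martin--Girsanov change of measure with the nonconstant drift $b=\nabla f/(2f)+a\phi/f$ for $(f,\phi)\in\F_\subs\times\Phi_y^\sups$, combined with the ergodic theorem for the environment viewed from the particle (to pass from Ces\`aro averages along the drifted path to the spatial averages $\E[|b|^2f]$, $\E[Vf]$). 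Your proposal never engages with this step; proving $\alpha_V=\RR(-\Lambda_\omega)$ as a standalone identity before touching $\Gamma_V$ is therefore not achievable by the tools you invoke.

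Second, your claim that (E1) ``forces the two sublevel-type sets defining $\RR$ to coincide, hence equal suprema'' is incorrect. (E1) says $\sigma(\lambda)\leq -\Lambda_\omega(\lambda)$ on $\{|\lambda|^2/2<\sigma(\lambda)\}$, which gives only the containment $\{|\lambda|^2/2<\sigma(\lambda)\}\subset\{|\lambda|^2/2<-\Lambda_\omega(\lambda)\}$ and hence the one inequality $\RR(\sigma)\leq\RR(-\Lambda_\omega)$. The reverse inequality is obtained in the paper only by completing the full chain $\RR(\sigma)\leq\RR(-\Lambda_\omega)\leq\alpha_V\leq\Gamma_V=\RR(\sigma)$. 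Third, your sketch of $\RR(\sigma)=\Gamma_V$ gestures at ``duality/Cauchy--Schwarz'' and the formula for $\sigma(\lambda)$, but omits the actual crux: after both $\Gamma_V$ and $\RR(\sigma)$ are rewritten through the seminorm $H(\eta,f)$, one has $\Gamma_V(y)=\inf_f\sup_\eta[\,\cdot\,]$ while $\RR(\sigma)(y)=\sup_\eta\inf_f[\,\cdot\,]$, and the nontrivial step is the interchange $\inf_f\sup_\eta=\sup_\eta\inf_f$ (the paper applies a concave--convexlike minimax theorem of Sion, verifying concavelikeness via the seminorm property of $H^{1/2}$ and convexlikeness via convexity of $I$, plus a compactness argument on $S^{d-1}$). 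Without identifying and handling this exchange, the equality $\RR(\sigma)=\Gamma_V$ is not established.
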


Theorem \ref{t:varform} is a generalisation of the variational formula for Lyapunov exponents of Brownian motion in periodic potentials established by Schroeder in \cite{Schroeder88} to stationary and ergodic potentials. We extend the techniques developed by Schroeder to prove Theorem \ref{t:varform}. A refined examination allowed us to express the Lyapunov exponent in terms of $\sigma$ and $\Lambda_\omega$.

In order to show existence of the Lyapunov exponent, in \cite{Sznitman94} the potential is assumed to satisfy a finite range dependence property and in \cite{Zerner98, Mourrat11} the potential is assumed to be i.i.d.. In this respect, the present work generalises the existence of the Lyapunov exponent to potentials with long range dependencies.

Some conditions may be relaxed if only parts of the results are considered. For example for $\alpha_V \leq \Gamma_V$ we only need to impose (B) and (G), while for $\alpha_V \geq \RR(\sigma)$ we require (G) and (E1). For $\Gamma_V=\RR(\sigma)$ we only need (E2) and $V \in L^2$. For $\Gamma_V=\RR(\sigma)$ no ergodicity of $\P$ is required.

We outline alternative possible choices for the function spaces $\F_\subs$ and $\Phi_y^\sups$ in Proposition \ref{prop:different_spaces}. We derive further variational expressions for the Lyapunov exponent in Proposition \ref{t:alternativerep1} and Proposition \ref{prop:finalequality}. In Proposition \ref{prop:ga_sol_of_minimizationproblem} we show that $\alpha_V$ is the unique solution to a variational problem.

The non-compactness of the underlying probability space induced additional complexity. For example our upper bound relies decisively on the quite general and relatively new version of the Ergodic Theorem \cite[(A.9)]{DelTenno2009SpecialExamples}, which may be interpreted as Ergodic Theorem for the `point of view of the particle'. Formula \eqref{eq:quenchedfreeenergy0} shown by Donsker and Varadhan in the compact case and used by Schroeder in \cite[(3.7)]{Schroeder88}, is not applicable in the present setting. We applied homogenization results from \cite{Kosygina2006} to derive the required estimate, see Proposition \ref{prop:logmomgenfuncupperbound}. Additionally, the last density statement given in Lemma \ref{lem:spacesdense} which is essential for Proposition \ref{t:alternativerep1}, while obvious in the compact case, is not trivial in the present general setting. It is crucial for the transition to the non-compact setting to establish this density property.

Theorem \ref{t:varform} shows to be suitable to study the Lyapunov exponent in more detail. It enables for example to derive continuity properties of the Lyapunov exponent with respect to the potential and the underlying probability measure. It also allows to establish strict inequalities. Results into this direction are part of a subsequent article, see \cite{Ruess2014}.

\paragraphstartformat{Influence of randomness}

The variational formula given in Theorem \ref{t:varform} allows to determine the influence of the randomness of the potential on the Lyapunov exponent. Choosing $f \equiv 1$ and $\phi \equiv y$ in \eqref{eq:definition_varform} one has

\begin{corollary}\label{cor:comparisonwithaveragedpotential}
Let $V$ be a potential. Then $\Gamma_V \leq \Gamma_{\E V}$. Assume additionally that $(\Omega,\FF,\P,\tau)$ is an ergodic dynamical system and $V$ is a regular potential, then for $y \in \R^d$ $\P$-a.s.,
\begin{align}\label{eq:annealedbound}
\alpha_V(y) \leq \alpha_{\E V}(y).
\end{align}
\end{corollary}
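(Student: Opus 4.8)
The approach is exactly the one indicated by the sentence preceding the statement: test the variational expression \eqref{eq:definition_varform} with constant competitors. First I would verify admissibility. The constant density $f\equiv 1$ lies in $\F_\subs$ (for each $\omega$ the realisation $f_\omega\equiv1$ is smooth, $\E[1]=1$, it is bounded below by $1$, and all its derivatives vanish so the uniform bounds on $D^n$ hold trivially), and the constant vector field $\phi\equiv y$ lies in $\Phi_y^\sups$ (smooth realisations, $\E[\phi]=y$, $\nabla\cdot\phi=0$ since $\phi_\omega$ is constant on $\R^d$, and all derivatives vanish). Inserting $f\equiv1$ and $\phi\equiv y$ into \eqref{eq:definition_varform} kills the gradient term and leaves $\int Vf\,d\P=\E[V]$ and $\int|\phi|^2/(2f)\,d\P=|y|^2/2$, so that
\begin{align*}
\Gamma_V(y)\;\le\;2\Big(\E[V]\cdot\tfrac{|y|^2}{2}\Big)^{1/2}=\sqrt2\,|y|\,\sqrt{\E[V]}.
\end{align*}

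Next I would show that this last quantity is at most $\Gamma_{\E V}(y)$. For the \emph{constant} potential $\E V$ one has $\int(\E V)f\,d\P=\E V$ for every $f\in\F_\subs$ (since $\E[f]=1$) and $\int|\nabla f|^2/(8f)\,d\P\ge0$, while the Cauchy--Schwarz inequality in $L^2(\P)$ gives, for every $f\in\F_\subs$ and $\phi\in\Phi_y^\sups$,
\begin{align*}
|y|^2=\big|\E[\phi]\big|^2=\Big|\E\big[\tfrac{\phi}{\sqrt f}\,\sqrt f\,\big]\Big|^2\le\E\Big[\tfrac{|\phi|^2}{f}\Big]\,\E[f]=\E\Big[\tfrac{|\phi|^2}{f}\Big],
\end{align*}
hence $\inf_{\phi\in\Phi_y^\sups}\int|\phi|^2/(2f)\,d\P\ge|y|^2/2$. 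Feeding these two bounds into \eqref{eq:definition_varform} yields $\Gamma_{\E V}(y)\ge\sqrt2\,|y|\,\sqrt{\E V}$ (and in fact equality, by using the constant competitors for $\E V$ too). Combined with the previous display this gives $\Gamma_V(y)\le\Gamma_{\E V}(y)$; the case $y=0$ is trivial since then every term is $0$. This part uses only $V\in L^1$ and $V\ge0$, no ergodicity or regularity.

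For the $\P$-a.s.\ inequality I would invoke Theorem \ref{t:varform}: under the stated hypotheses $\P$-a.s.\ $\alpha_V(y)=\Gamma_V(y)$, and it then suffices to identify $\alpha_{\E V}(y)$ with $\Gamma_{\E V}(y)=\sqrt2\,|y|\,\sqrt{\E V}$. The quickest way is the classical computation for a constant potential: the Green function of $-\tfrac12\Laplace+\E V$ on $\R^d$ is the explicit Yukawa kernel, with $g(0,x)\sim C\,|x|^{-(d-1)/2}e^{-\sqrt{2\E V}\,|x|}$ as $|x|\to\infty$, so $\alpha_{\E V}(y)=\sqrt{2\E V}\,|y|=\sqrt2\,|y|\,\sqrt{\E V}$. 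Alternatively one verifies that the constant potential $\E V$ is itself a regular potential --- (B) since $\E V\le\vmax<\infty$; (G) from smoothness of the Yukawa kernel off the origin; (E2) since $\E V\ge\sigma(0)>0$ (take $g\equiv1$ in the representation of $\sigma(0)$ from Proposition \ref{prop:formulaforI}); (E1) since for a constant potential $\Lambda_\omega(\lambda)\equiv-\E V\equiv-\sigma(\lambda)$, the last equality because the environment process of Brownian motion with any constant drift leaves $\P$ invariant, so the associated Donsker--Varadhan functional vanishes at $\P$ --- and then applies Theorem \ref{t:varform} to $\E V$ to get $\alpha_{\E V}=\Gamma_{\E V}$. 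Either way $\alpha_V(y)=\Gamma_V(y)\le\Gamma_{\E V}(y)=\alpha_{\E V}(y)$ $\P$-a.s., with $y=0$ covered by the conventions $\alpha_V(0)=\alpha_{\E V}(0)=0$.

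I do not expect a real obstacle: the corollary is a direct consequence of testing \eqref{eq:definition_varform} with constant competitors. The only steps needing a moment's care are the admissibility of $f\equiv1$ and $\phi\equiv y$ in $\F_\subs$ and $\Phi_y^\sups$; the Cauchy--Schwarz bound $\inf_{\phi}\int|\phi|^2/(2f)\,d\P\ge|y|^2/2$, which is what makes the comparison with the constant potential tight; and, for the almost-sure statement, pinning down $\alpha_{\E V}(y)$ --- obtained either from the explicit Yukawa kernel or by checking that $\E V$ satisfies the standing assumptions, with (E1) the least immediate of these.
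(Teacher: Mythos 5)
Your proposal is correct and it is exactly the argument the paper intends: the paper's entire ``proof'' is the sentence ``Choosing $f\equiv 1$ and $\phi\equiv y$ in \eqref{eq:definition_varform} one has,'' and you have simply filled in the routine verifications (admissibility of the constant competitors, the Cauchy--Schwarz lower bound $\inf_\phi\E[|\phi|^2/(2f)]\ge|y|^2/2$ pinning down $\Gamma_{\E V}(y)=\sqrt{2\E V}\,|y|$, and the identification of $\alpha_{\E V}$ via Theorem~\ref{t:varform} or the Yukawa kernel). Nothing differs in substance from the paper's intended reasoning.
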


This inequality has been derived for Poissonian potentials by the author in \cite[Theorem 4]{Ruess2012}. In the discrete setting of random walk in i.i.d.\ random potential such a result is available in a more general formulation more directly as proven by Zerner in \cite[Proposition 4]{Zerner98} with the help of Jensen's inequality. Note, that in the continuous setting a direct application of Jensen's inequality is not possible. In \cite{Ruess2012} a discretisation technique is applied; alternatively a generalisation of Jensen's inequality to a functional analytic framework might also lead to this result if one had an representation as in \eqref{eq:identification}, see the comment given in \cite{Ruess2012}.


\paragraphstartformat{Long range survival probabilities}

Let $y \in \R^d$ and $B(0,1)$ denote the open ball with centre $0$ and radius $1$ and $\overline{B(0,1)}$ its closure. We introduce
\begin{align*}
e(y,\omega):=E_y \bigg[\exp\bigg\{-\int_0^{H(0)}V_\omega(Z_s)ds\bigg\},\, H(0)<\infty \bigg],
\end{align*}
where $H(0):=\inf\{s > 0: Z_s \in \overline {B(0,1)}\}$ is the hitting time of $\overline{B(0,1)}$. Under suitable assumptions the results given in \cite{Armstrong2012} together with Theorem \ref{t:varform} show that the exponential decay of the Green function coincides with the exponential decay of $e(\cdot,\omega)$, see Proposition \ref{prop:identification}: For $y \in \R^d$ $\P$-a.s.,
\begin{align}\label{eq:identification}
\lim_{r \to \infty} -\frac{1}{r}\ln e(-ry,\omega) 
= \lim_{r \to \infty} -\frac{1}{r}\ln g(0,ry,\omega).
\end{align}
This is known already e.g.\ for Poissonian potentials, see \cite[Theorem 0.2]{Sznitman94}.

\paragraphstartformat{Quenched free energy}

By Theorem \ref{t:varform} a computation of the inverse of $\RR$ leads to the following.

\begin{corollary}
\label{cor:quenchedfreeenergy}
\EDS{} Let $V$ be a potential such that $V+\mu$ is a \pott{} for $\mu>0$. Choose $c \in \R$ such that $\P$-a.s.\ $c \leq \min\{\sigma(0),-\Lambda_\omega(0)\}$. Then $\P$-a.s.\ for those $\lambda \in \R^d$, which satisfy $\sigma(\lambda)-\lambda^2/2 <c$ or $-\Lambda_\omega(\lambda)-\lambda^2/2<c$, we have
\begin{align}\label{eq:quenchedfreeenergy}
\sigma(\lambda)=-\Lambda_\omega(\lambda).
\end{align}
\end{corollary}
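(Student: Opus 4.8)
The plan is to deduce \eqref{eq:quenchedfreeenergy} from Theorem~\ref{t:varform} applied to the \pott{}s $V+\mu$, $\mu>0$, by \emph{inverting} the functional $\RR$. First I record the two constant-shift identities $\sigma_{V+\mu}=\sigma+\mu$ and $\Lambda^{V+\mu}_\omega=\Lambda_\omega-\mu$, valid for all $\mu>0$: the first because a constant added to $V$ passes through the integrals in the definition of $\sigma$ and, since the competing $f$ are probability densities on $\Omega$, contributes exactly $\mu$; the second because the constant pulls out of $\int_0^t(V_\omega+\mu)(Z_s)ds=\int_0^tV_\omega(Z_s)ds+\mu t$ in \eqref{eq:defQFE}. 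Theorem~\ref{t:varform} then gives, $\P$-a.s., $\RR(\sigma+\mu)=\Gamma_{V+\mu}=\RR(-\Lambda_\omega+\mu)$; intersecting over $\mu\in\Q_{>0}$ and using that $\mu\mapsto\RR(\,\cdot+\mu\,)$ is monotone to pass to all real $\mu>0$, this holds simultaneously for every $\mu>0$ on a single event of full measure, which I also intersect with $\{c\le-\Lambda_\omega(0)\}$ and with the conclusions of (E1) for the potentials $V+\mu$, $\mu\in\Q_{>0}$. Using (E1) for $V+\mu$ (for a fixed $\lambda$ one may always choose $\mu>\lambda^2/2-\sigma(\lambda)$, so the hypothesis of (E1) is met) together with the shift identities then yields the \emph{unconditional} comparison $\sigma(\lambda)\le-\Lambda_\omega(\lambda)$ for all $\lambda\in\R^d$, first on a countable dense set and then, by continuity, everywhere.

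For the inversion, observe that $\RR(a+\mu)$ is precisely the support function of the superlevel set $S_a^\mu:=\{\lambda\in\R^d:\,a(\lambda)-\lambda^2/2>-\mu\}$, so the displayed identity says that $S_\sigma^\mu$ and $S_{-\Lambda_\omega}^\mu$ have the same closed convex hull, for every $\mu>0$. The decisive structural fact is that $\lambda\mapsto\Lambda_\omega(\lambda)+\lambda^2/2$ is convex: by Girsanov, $E^\lambda\big[\exp\{-\int_0^tV_\omega(Z_s)ds\}\big]=\exp\{-\lambda^2t/2\}\,E\big[\exp\{\lambda\cdot Z_t\}\exp\{-\int_0^tV_\omega(Z_s)ds\}\big]$, and H\"older's inequality makes $\lambda\mapsto t^{-1}\ln E[\exp\{\lambda\cdot Z_t\}\exp\{-\int_0^tV_\omega(Z_s)ds\}]$ convex for each $t$, hence so is its decreasing limit superior. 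Therefore $h(\lambda):=-\Lambda_\omega(\lambda)-\lambda^2/2$ is \emph{strictly} concave, continuous, and, by (B), satisfies $h(\lambda)\le\vmax-\lambda^2/2\to-\infty$; consequently, whenever $-\mu<\max h$, the set $S_{-\Lambda_\omega}^\mu=\{h>-\mu\}$ is a bounded strictly convex body whose boundary points are all exposed, and $\overline{\mathrm{conv}}\,S_{-\Lambda_\omega}^\mu=\overline{S_{-\Lambda_\omega}^\mu}=\{h\ge-\mu\}$.

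Now I fix $\lambda_0$ with $h(\lambda_0)<c$. In the main case $c\le0$ put $\mu_0:=-h(\lambda_0)>0$; since $h(\lambda_0)<c\le0$ and $0\le-\Lambda_\omega(0)=h(0)\le\max h$, the set $\{h\ge-\mu_0\}=\overline{\mathrm{conv}}\,S_\sigma^{\mu_0}$ is a genuine strictly convex body and $\lambda_0$ is one of its exposed points, so $\langle y,\lambda_0\rangle$ is the \emph{unique} maximum of $\langle y,\cdot\rangle$ over $\{h\ge-\mu_0\}$ for a suitable direction $y$. Because this maximum equals $\RR(\sigma+\mu_0)(y)=\sup_{S_\sigma^{\mu_0}}\langle y,\cdot\rangle$, and because $\sigma\le-\Lambda_\omega$ gives $S_\sigma^{\mu_0}\subseteq\{h>-\mu_0\}$, any maximizing sequence $(\lambda_n)\subset S_\sigma^{\mu_0}$ of $\langle y,\cdot\rangle$ stays in the compact set $\{h\ge-\mu_0\}$ and must converge to $\lambda_0$; as $\sigma$ is upper semicontinuous, $\sigma(\lambda_0)-\lambda_0^2/2\ge\limsup_n(\sigma(\lambda_n)-\lambda_n^2/2)\ge-\mu_0=h(\lambda_0)$, i.e.\ $\sigma(\lambda_0)\ge-\Lambda_\omega(\lambda_0)$, which with the reverse inequality forces $\sigma(\lambda_0)=-\Lambda_\omega(\lambda_0)$. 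When $c>0$ one has $\sigma(0)\ge c>0$, so (E2) holds for $V$ itself and the part of Theorem~\ref{t:varform} needing only (E2) and $V\in L^2$ gives $\Gamma_V=\RR(\sigma)$; letting $\mu\downarrow0$ in $\Gamma_{V+\mu}=\RR(-\Lambda_\omega+\mu)$ — both sides decreasing, to $\Gamma_V$ and $\RR(-\Lambda_\omega)$ respectively, thanks to the uniform boundedness of the $S_{-\Lambda_\omega}^\mu$ — yields $\RR(\sigma)=\RR(-\Lambda_\omega)$, and the same exposed-point argument run at level $0$ (and across the family $\{V+\mu\}$) covers the rest of $\{h<c\}$; finally $\sigma\le-\Lambda_\omega$ forces $\{\sigma(\lambda)-\lambda^2/2<c\}$ and $\{h<c\}$ to coincide on the set just treated, so the asserted identity holds for every $\lambda$ in the union.

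The step I expect to be the main obstacle is exactly this inversion of $\RR$: a priori one knows neither convexity nor even continuity of $\sigma$, so $S_\sigma^\mu$ cannot be shown convex directly; the workaround is to import convexity — indeed strictness — from the $-\Lambda_\omega$ side, and then to use only upper semicontinuity of $\sigma$ and the sandwich $\sigma\le-\Lambda_\omega$ supplied by (E1). A secondary, more routine difficulty is the measure-theoretic bookkeeping: producing a single $\P$-null exceptional set that simultaneously serves all the countably many invocations of Theorem~\ref{t:varform} and (E1), and upgrading from rational $\mu$ and a countable set of $\lambda$ to all values by monotonicity and continuity.
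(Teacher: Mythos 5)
Your overall route --- shift $V$ by $\mu$, apply Theorem~\ref{t:varform} to $V+\mu$ to get $\RR(\sigma+\mu)=\RR(-\Lambda_\omega+\mu)$, read this as equality of support functions of superlevel sets, and invert --- is the paper's, and your observation that feeding (E1) with $V+\mu$ for $\mu$ large yields the unconditional inequality $\sigma\le-\Lambda_\omega$ is a nice addition. However, the inversion has a genuine gap: you assert that $h(\lambda)=-\Lambda_\omega(\lambda)-\lambda^2/2$ is \emph{strictly} concave, so that $\{h\ge-\mu_0\}$ is a strictly convex body and $\lambda_0$ is an exposed boundary point, which is what drives the uniqueness of the maximizer and hence the conclusion that a maximizing sequence from $S_\sigma^{\mu_0}$ must converge to $\lambda_0$. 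The H\"older/Girsanov argument you cite gives only convexity of $\lambda\mapsto\Lambda_\omega(\lambda)+\lambda^2/2$, not strict convexity (a pointwise limit superior of strictly convex functions need not be strictly convex), and nothing else in the hypotheses rules out flat faces. If $\lambda_0$ lies in the relative interior of such a face it is not exposed, no direction $y$ with a unique maximizer exists, and the bound $\sigma(\lambda_0)-\lambda_0^2/2\ge-\mu_0$ does not follow.

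What repairs the argument is precisely the structural fact you chose to avoid: \eqref{eq:M_concave} shows that $\lambda\mapsto\sigma(\lambda)-\lambda^2/2$ is concave, hence continuous. With concavity and continuity of \emph{both} functions, the closed convex hull of each open superlevel set equals the corresponding closed superlevel set, so the support-function identity upgrades directly to $\{\lambda:\sigma(\lambda)-\lambda^2/2\ge-\mu\}=\{\lambda:-\Lambda_\omega(\lambda)-\lambda^2/2\ge-\mu\}$ for all relevant $\mu$; taking $\mu_0:=-h(\lambda_0)$ then gives $\sigma(\lambda_0)-\lambda_0^2/2\ge-\mu_0$ with no exposedness or uniqueness needed, and combined with $\sigma\le-\Lambda_\omega$ this is the claim. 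This is exactly what Lemma~\ref{lem:inverseofR} encodes: its inversion formula \eqref{eq:varform_for_sigma}, applied on both sides, yields $(\sigma(\lambda)-\lambda^2/2)\wedge c=(-\Lambda_\omega(\lambda)-\lambda^2/2)\wedge c$, which also disposes of the $\{\sigma(\lambda)-\lambda^2/2<c\}$ alternative in the statement --- your closing remark about that case is too quick to count as a proof.
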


Since $-\Lambda_\omega(0) \geq 0$ and $\sigma(0) \geq 0$ we can always choose $c = 0$. Note that for bounded potentials $\sigma(\lambda)-\lambda^2/2$ as well as $-\Lambda_\omega(\lambda)-\lambda^2/2$ tend to $-\infty$ as $|\lambda| \to \infty$, see Subsection~\ref{subsec:equalityonwholeR}.

Equality \eqref{eq:quenchedfreeenergy} is known if $\Omega$ is compact, we refer to the articles of Donsker and Varadhan \cite{Donsker75VariationalFormula, Donsker75b}. In the case of non-compact $\Omega$ the investigation is much more complicated, as the examples given in \cite[Paragraph 9]{Donsker76} illustrate. For Polish $\Omega$ the non-compact setting is studied again by Donsker and Varadhan in \cite{Donsker76,Donsker1983}. There, $L^\lambda$ is assumed to generate a Feller process on $\Omega$ and the existence of special subsolutions to $(L^\lambda u)/u \leq A$ for some $A \in \R$ is needed. On $\R^d$ \eqref{eq:quenchedfreeenergy} is investigated in \cite{Donsker1976PrincipalEigenvalue, Donsker1975AsymptoticEvaluationWienerIntegrals} by the same authors. Discrete space models have also been examined. For example for random walks in random potential such formulae are given by Rassoul-Agha, Seppalainen and Yilmaz in \cite{Rassoul11QuenchedFreeEnergy} in quite generality. We also refer to the detailed overview of literature given in \cite[Subsection 1.3]{Rassoul11QuenchedFreeEnergy} concerning this topic.

To the best of our knowledge the representation \eqref{eq:quenchedfreeenergy} of the quenched free energy is new in the present context. One might want to compare our results with the series of equations given in \cite[p.1497]{Kosygina2006}, but the exchange of infimum and supremum there is done heuristically and not rigorous. Note also, that the regularity of test-functions in the continuous space setting obstructs the direct application of a finite $\sigma$-algebra approach as e.g.\ used in \cite{Rassoul11QuenchedFreeEnergy}.

\paragraphstartformat{Organisation of this article.}
Section \ref{sec:preliminaries} contains notation and preliminary results. In particular a list of function spaces and some denseness results are given. In Section \ref{sec:proofofvarform} we prove Theorem \ref{t:varform}. Further representations of $\sigma$ and $\Gamma_V$ are given in Subsection \ref{subsec:equivalence}. Appendix \ref{app:appendix} contains additional material such as conditions under which (E1) is valid and proofs for \eqref{eq:identification}, Corollary \ref{cor:quenchedfreeenergy} as well as for the denseness results.

\section{Notation and Preliminaries}\label{sec:preliminaries}

In this section we present notation and some basic results. By $\BB(\R^d)$ we denote the Borel $\sigma$-algebra on $\R^d$ and with $\leb$ the Lebesgue measure on $\BB(\R^d)$. By $|\cdot|$ we mean the Euclidean norm in $\R^d$. Unit vectors in coordinate directions are denoted by $e_i$. The scalar product between vectors $x$, $y \in \R^d$ is sometimes denoted by $\langle x, y\rangle$, where $\langle \cdot, \cdot\rangle$ is the scalar product on $(L^2)^d$, see below. $S^{d-1}$ is the sphere in $\R^d$, $B(x,r)$ is the open ball in $\R^d$ around $x$ of radius $r$. We write $C_\compactc^\infty$ for the set of real-valued functions on $\R^d$ with compact support having derivatives of any order. $L^p_\loc$ denotes the space of locally $p$-integrable functions on $\R^d$. If $A$ is a subset of a topological space $X$, by $\overline A$ we mean the closure of $A$ and by $\partial A$ the boundary of $A$. For $x$, $y \in \R^d$ we say $x$ is parallel to $y$ if $x y =|x||y|$. For bounded measurable $b:\R^d\to\R^d$ we denote by $P^b_0$ the unique solution to the martingale problem $\MP(b,\id)$ starting at $\delta_0$, see \cite[Proposition 5.3.6, Corollaries 5.3.11, 5.4.8]{Karatzas1991}. The corresponding expectation operator is denoted by $E_0^b$.

For $p \geq 1$ we denote by $L^p$ the space of $p$-integrable functions on $(\Omega,\FF,\P)$. By $(L^p)^d$, $1 \leq p \leq \infty$, we denote the vector space of measurable $f:\Omega \to \R^d$ such that $\|f\|_p < \infty$ where $\|f\|_p:=(\sum_{i=1}^d \|f_i\|_p^p)^{1/p}$ in the case $p<\infty$ and $\|f\|_\infty:= \max_i \|f_i\|_\infty$ if $p=\infty$. $(L^2)^d$ is provided with the scalar product $\langle \phi, \psi \rangle := \E[\phi\cdot\psi]$, $\phi$, $\psi \in (L^2)^d$. If $f\in L^1$ is essentially bounded such that $f \geq c$ for some $c>0$, then we consider sometimes the inner product $\langle \phi,\psi \rangle_f:=\E[\phi \cdot \psi f]$ and the associated norm $\|\phi\|_f:= \E[\phi^2f]^{1/2}$ for $\phi$, $\psi \in (L^2)^d$.

Let $f$ be a measurable function on $\Omega$. If $\P$-a.s.\ the realisations of $f$ are differentiable, we say that $f$ is (classically) differentiable. The following concept of weak differentiability on $\Omega$ is used throughout the article: Denote for the moment by $D^{\euclid,\nu}u$ the $\nu$-th weak derivative of a function $u \in L^1_\loc$ (if it exists), where $\nu \in (\N_0)^d$ is a multi-index. Here we write superscript `$\euclid$' for `Euclidean' space derivative. $f$ is said to possess a $\nu$-th weak derivative if for $\P$-a.e.\ $\omega \in \Omega$ the realisation $f_\omega$ possesses $\nu$-th weak derivative and if there is a measurable function $g$ defined on $\Omega$ such that $\P$-a.s.\ $\leb$-a.e.\ $g_\omega(x) = D^{\euclid,\nu}(f_\omega)(x)$. $g$ is called the $\nu$-th weak derivative of $f$ and we denote it by $D^\nu f$. If $f$ possesses $\nu$-th weak derivative for any $\nu$ such that $0 \leq \nu_1 + \nu_2 + \ldots + \nu_d \leq m$, $m \in \N$, we say $f$ is weakly differentiable of order $m$. If $f$ is weakly differentiable of order one we simply call $f$ weakly differentiable. 

For $f$ weakly and classically differentiable, the classical derivative coincides with the weak derivative $\P$-a.s.\ and hence, without ambiguity we use the same symbols for the weak and the classical (partial) differential operators.
Partial derivatives are also denoted by $\partial_if$ and $\partial_{ij}f:=\partial_i\circ\partial_jf$. As for classically differentiable functions we introduce for weakly differentiable functions the gradient operator $\nabla$ and for functions for which $\partial_{ii}$, $i=1, \ldots d$, exist we introduce the Laplace operator $\Laplace$.

For $1 \leq i \leq d$ the shift defines a strongly continuous one-parameter group of unitary operators $S^i = (S_t^i)_{t \in \R}$ on $L^2$ via $S_t^i:$ $L^2 \to L^2$, $f \mapsto f \circ \tau_{te_i}$, see e.g.\ \cite[(7.2)]{Jikov1994}, whose generator is given by $(\partial_i,\DD(\partial_i))$ defined on
\begin{align*}
\DD(\partial_i):= \{f \in L^2: \, f \text{ is weakly differentiable in direction $i$},\, \partial_i f \in L^2 \}.
\end{align*}
Moreover, skew-adjointness of the generator shows for $f, g \in \DD(\partial_i)$ integration by parts:
\begin{align}\label{eq:partialintegration}
\E[f \partial_i g] = - \E[(\partial_i f) g] \text{ and } \E[\partial_i f]= 0.
\end{align}
These results correspond to \cite[Lemma A.4]{Schmitz2009} in the case $d=1$, and can be proven as in the Euclidean case, see e.g.\ \cite[II.2.10]{Engel2000}. More details can be found in the previous version \cite{Ruess2013} of this article.

Different function spaces will be needed throughout this article: We define $\|f\|_\nabla := \|f\|_2 + \sum_{i=1}^d \| \partial_i f\|_2$ for $f \in \bigcap_{i=1}^d \DD(\partial_i)$,
and we introduce spaces of test functions:
\begin{compactitem}[]
\item $\D_\subw := \bigcap_{i=1}^d \DD(\partial_i)$, 
\item $\D_\subw^2 := \{f \in \D_\subw :\, \partial_i f \in \DD(\partial_i)
\text{ for } i=1,\ldots d,\, \|f\|_\infty,\, \|\nabla f\|_\infty,\, \|\Laplace f\|_\infty < \infty\}$,
\item $\D_\subs := \{f \in \D_\subw :\, f_\omega \in C^\infty(\R^d)\, \forall \omega \in \Omega,\, \sup_\Omega |D^n f|<\infty \text{ for } n \in \N_0\}$.
\end{compactitem}
Exponential of space of test functions: $\U := e^{\D_\subw^2}$, using \cite[(7.18), Lemma 7.5]{Gilbarg83},
\begin{compactitem}[]
\item $\U = \{f \in \D_\subw^2 :\, \exists\, c>0 \text{ s.t. } f>c\, \P \text{-a.s.}\}$.
\end{compactitem}
Spaces of probability densities:
\begin{compactitem}[]
\item $\F_\subw := \{f \in \D_\subw :\, \E f = 1,\, \exists\, c>0 \text{ s.t. } f>c\, \P\text{-a.s.},\, \|f\|_\infty,\, \|\nabla f\|_\infty < \infty\}$,
\item $\F_\subw^2 := \F_\subw \cap \D_\subw^2$,
\item $\F_\subs := \{f \in \D_\subs :\, \E f =1,\, \exists\, c>0 \text{ s.t. } f>c\}$.
\end{compactitem}
Spaces of divergence-free vector fields: Let $y \in \R^d$,
\begin{compactitem}[]
\item $\Phi_y^\supw := \{\phi \in (L^2)^d :\, \E[ (\nabla w )\phi ] = 0\, \forall w \in \D_\subs,\, \E\phi = y\}$,
\item $\Phi_y^\sups := \{\phi \in (\D_\subs)^d :\, \nabla \cdot \phi = 0 \, \forall \omega \in \Omega, \, \E\phi = y\}$.
\end{compactitem}
And we consider the following sets of spaces:
\begin{compactitem}[]
\item $\DDD := \{\D \subset \D_\subw :\, \D \text{ dense in } \D_\subw \text{ w.r.t. } \|\cdot\|_\nabla\}$,
\item $\FFF :=\{\F \subset \F_\subw :\, \forall f \in \F_\subw\, \exists (f_n)_n \subset \F \text{ and } c>0 \text{ s.t. } \|f_n - f\|_\nabla \to 0, \ \inf_n f_n>c\, \P\text{-a.s.}\}$,
\item $\PPPhi_y := \{\Phi_y \subset \Phi_y^\supw :\, \Phi_y \text{ dense in } \Phi_y^\supw \text{ w.r.t. } \|\cdot\|_2\}$.
\end{compactitem}

We establish denseness results whose proofs are postponed to Appendix \ref{app:derivative}:

\begin{lemma}\label{lem:spacesdense}
$\D_\subs$ is dense in $L^2$, and $\D_\subs \in \DDD$, $\F_\subs \in \FFF$ and $\Phi_y^\sups \in \PPPhi_y$ for any $y \in \R^d$.
\end{lemma}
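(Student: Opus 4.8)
The plan is to prove the four density assertions — $\D_\subs$ dense in $L^2$, $\D_\subs\in\DDD$, $\F_\subs\in\FFF$, $\Phi_y^\sups\in\PPPhi_y$ — by a single smoothing/mollification device carried out on $\Omega$: convolution of an $L^2$ (or weakly differentiable, or divergence-free) function against a test kernel along the $\R^d$-action $\tau$. Concretely, for $\psi\in C_\compactc^\infty$ and $u\in L^1$ set $(u\ast\psi)(\omega):=\int_{\R^d}u(\tau_x\omega)\psi(x)\,dx$. Then $(u\ast\psi)_\omega=u_\omega\ast\psi$ in the ordinary Euclidean sense, so $(u\ast\psi)_\omega\in C^\infty(\R^d)$ with $D^n(u\ast\psi)_\omega=u_\omega\ast D^n\psi$, whence $\sup_\Omega|D^n(u\ast\psi)|\le\|u\|_1\|D^n\psi\|_\infty<\infty$ if $u\in L^\infty$; more carefully, using the invariance of $\P$ one gets $\|u\ast\psi\|_2\le\|u\|_2\|\psi\|_1$ and, by Young's inequality applied fibrewise together with Fubini, $u\ast\psi\to u$ in $L^2$ as $\psi$ runs through an approximate identity $(\psi_\varepsilon)$. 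This already gives the first claim once one also truncates: for general $u\in L^2$ first replace $u$ by $u\wedge N\vee(-N)$, which still lies in $L^2$ and converges to $u$, and then mollify; the double limit supplies a sequence in $\D_\subs$ (boundedness of all $\sup_\Omega|D^nf|$ coming from the $L^\infty$-truncation). I would isolate this as a preliminary lemma: \emph{if $u\in L^2\cap L^\infty$ then $u\ast\psi_\varepsilon\in\D_\subs$ and $u\ast\psi_\varepsilon\to u$ in $L^2$}, and additionally \emph{if $\partial_i u\in L^2$ then $\partial_i(u\ast\psi_\varepsilon)=(\partial_i u)\ast\psi_\varepsilon\to\partial_i u$ in $L^2$}, the latter because the shift group $S^i$ commutes with convolution in the $x$-variable and differentiation passes through the $\tau$-average by \eqref{eq:partialintegration} and dominated convergence.

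For $\D_\subs\in\DDD$ I would take $f\in\D_\subw=\bigcap_i\DD(\partial_i)$, first truncate in value (a standard cutoff $f\mapsto\chi_N(f)$ with $\chi_N$ smooth, $\chi_N(t)=t$ on $[-N,N]$, $|\chi_N'|\le1$) so that the truncated function still lies in $\D_\subw$ with $\partial_i\chi_N(f)=\chi_N'(f)\partial_i f\to\partial_i f$ in $L^2$ and $\|\chi_N(f)\|_\infty\le N$; then mollify. By the preliminary lemma the mollified truncations lie in $\D_\subs$ and converge to $f$ in $\|\cdot\|_\nabla$, since $\|g\ast\psi_\varepsilon-g\|_2\to0$ and $\|\partial_i(g\ast\psi_\varepsilon)-\partial_i g\|_2=\|(\partial_i g)\ast\psi_\varepsilon-\partial_i g\|_2\to0$ for each fixed truncation level $g=\chi_N(f)$, followed by letting $N\to\infty$ and extracting a diagonal sequence. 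For $\F_\subs\in\FFF$ the same construction is used but one must preserve the two extra structural constraints: positivity bounded below and normalisation $\E f=1$. Given $f\in\F_\subw$ with $f>c>0$, one checks $f\ast\psi_\varepsilon\ge c$ pointwise (the kernel is a probability density after normalisation, and an average of quantities $\ge c$ is $\ge c$), and $\E[f\ast\psi_\varepsilon]=\E[f]=1$ by the invariance of $\P$ under $\tau_x$ and Fubini — so no renormalisation is needed, and the uniform lower bound $\inf_n f_n>c$ demanded in the definition of $\FFF$ holds with the \emph{same} $c$. Here value-truncation from above ($f\wedge N$) is harmless for the lower bound and keeps $\E[f\wedge N]\to1$, so a mild rescaling $f_n:=(f_n^{\mathrm{raw}})/\E[f_n^{\mathrm{raw}}]$ fixes normalisation while the lower bound degrades only by a factor $\to1$; I would absorb this into the bookkeeping.

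The genuinely delicate case — and the one the introduction flags as "not trivial in the present general setting" — is $\Phi_y^\sups\in\PPPhi_y$, i.e. approximating an arbitrary $\phi\in\Phi_y^\supw$ (divergence-free only in the weak/duality sense $\E[(\nabla w)\phi]=0$ for all $w\in\D_\subs$, and merely $L^2$) by smooth, pointwise divergence-free fields with the correct mean $y$. Mollification behaves well: $\phi\ast\psi_\varepsilon$ has smooth bounded-derivative realisations, $\E[\phi\ast\psi_\varepsilon]=y$, and it is again weakly divergence-free; moreover for smooth fields weak and classical divergence coincide, so $\phi\ast\psi_\varepsilon$ is in fact pointwise divergence-free once we also know each realisation is smooth — this is the payoff of convolving. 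The subtlety is that a raw $\phi\in(L^2)^d$ need not be bounded, so $\phi\ast\psi_\varepsilon$ need not have $\sup_\Omega|D^n\phi|<\infty$. I would handle this by a Helmholtz/Weyl-type decomposition on $\Omega$: the space of weakly-divergence-free $L^2$ vector fields is the orthogonal complement of the closure of gradients $\{\nabla w:w\in\D_\subs\}$ in $(L^2)^d$; on this subspace I want to truncate $\phi$ without destroying divergence-freeness. One route is to truncate the associated stream structure: in $d=2$ write $\phi=\nabla^\perp h$ with $h$ weakly differentiable (possible since $\phi-y$ is a mean-zero divergence-free field, hence a "curl"), truncate and mollify $h$, and set $\phi_n:=y+\nabla^\perp(\text{smoothed truncation of }h)$, which is automatically pointwise divergence-free with mean $y$; in general $d$ one uses a vector potential / the Hodge-type decomposition of $\phi-y$ into an $L^2$ two-form, truncates that, and takes $\delta$ of the mollification. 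The main obstacle is precisely supplying such a bounded potential and controlling the $L^2$-convergence $\phi_n\to\phi$ through the truncation of the potential — boundedness of the potential does not follow from $\phi\in L^2$, so one truncates the potential in value, mollifies, differentiates, and must argue that the resulting fields still converge to $\phi$ in $(L^2)^d$; this is where the non-compactness bites, because one cannot simply invoke compactness of $\Omega$ to pass from $L^2$-density of scalars to $L^2$-density of divergence-free fields. I would spend the bulk of the Appendix argument on this point, reducing everything else to the preliminary mollification lemma.
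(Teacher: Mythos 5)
Your treatment of the first three claims is essentially the paper's argument: truncate in value, mollify along the $\R^d$-action, and check convergence in $\|\cdot\|_\nabla$ via Young's inequality. The paper's mild variant is to truncate with indicators $f_M:=f1_{|f|\leq M}$ and to control $\partial_i(f_M)_\varepsilon-\partial_if_\varepsilon=(f-f_M)\ast\partial_i\g_\varepsilon$ directly, and for $\F_\subs\in\FFF$ it simply replaces $f$ by $\bar f:=(f\wedge\|f\|_\infty)\vee c$, which agrees with $f$ $\P$-a.s.\ since $f\in\F_\subw$; then $\bar f_\varepsilon\in\F_\subs$ already has $\E[\bar f_\varepsilon]=1$ by translation invariance, so no renormalisation is needed and the uniform lower bound $c$ survives exactly.

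The real gap is the fourth claim, $\Phi_y^\sups\in\PPPhi_y$, and your sketch does not close it. You propose to represent $\phi-y\in\VV_\sol$ as the curl (codifferential of a two-form) of a potential $h$, truncate and mollify $h$, and take the curl of the smoothed truncation. But producing such an $L^2$-potential $h$ for an \emph{arbitrary} element of $\VV_\sol$ is precisely what fails in the non-compact setting: Weyl's decomposition \eqref{eq:weyl} identifies $\VV_\sol$ as the orthogonal complement of $\VV_\pot\oplus\R^d$, i.e.\ as the \emph{closure} of the image of the curl, not as its image, and there is no Poincar\'e-type solvability on $\Omega$ that would extract a stream potential. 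The paper avoids constructing $h$ entirely; this is the point of Lemma~\ref{lem:D_dense_in_V_sol}. It introduces a space $Y_\sol\subset(\D_\subs)^d\cap\VV_\sol$ of \emph{explicit} smooth curls built from finitely many $w_{ik}\in\D_\subs$ (see \eqref{eq:defYsol}), and shows $Y_\sol$ is dense in $\VV_\sol$ by duality: if $\psi\perp Y_\sol$, then translation invariance of $Y_\sol$ plus density of $\D_\subs$ in $L^2$ forces $\partial_i\psi_{\varepsilon,k}-\partial_k\psi_{\varepsilon,i}=0$ for all $\varepsilon>0$, hence $\rot\psi=0$ after letting $\varepsilon\to0$ in $L^2_\loc$, hence $\psi\in\VV_\pot\oplus\R^d$; consequently $\overline{Y_\sol}=((Y_\sol)^\perp)^\perp\supset(\VV_\pot\oplus\R^d)^\perp=\VV_\sol$. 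This is the ``exact forms are closed'' analogy the paper alludes to just after the lemma, and it is the missing ingredient in your outline: rather than \emph{representing} $\phi$ as a curl and truncating the representative, one shows that the orthogonal complement of the set of smooth curls is already absorbed by the rotation-free summand of Weyl's decomposition.
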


The last statement of Lemma \ref{lem:spacesdense}, while easy to see in the compact case, is quite more involved in the non-compact framework. For the proof we rely on an argument similar to the fact from differential geometry, that exact forms are closed. The next result considers modifications of the spaces in the variational expression:

\begin{proposition}\label{prop:different_spaces}
Let $V$ be a potential. $\Gamma_V$ remains unchanged if $\Phi_y^\sups$ is replaced by any of the sets  $\Phi_y\in \PPPhi_y$. If $V$ is a potential such that $V \in L^2$, then $\Gamma_V$ remains unchanged if $\F_\subs$ and $\Phi_y^\sups$ are replaced by any of the sets $\F \in \FFF$ and $\Phi_y \in \PPPhi_y$ respectively.
\end{proposition}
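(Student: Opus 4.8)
The plan is to prove the two invariance statements of Proposition~\ref{prop:different_spaces} by separately analysing the two factors in the definition \eqref{eq:definition_varform} of $\Gamma_V$ and showing that each is stable under the prescribed replacements of the function spaces.

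\textbf{Step 1: Invariance of the inner infimum over $\phi$.}
Fix $f$ with $f \geq c_f > 0$ and consider $\inf_{\phi \in \Phi_y^\sups} \int |\phi|^2/(2f)\, d\P = \tfrac12 \inf_{\phi \in \Phi_y^\sups} \|\phi\|_{1/f}^2$, where $\|\cdot\|_{1/f}$ is the norm associated to the inner product $\langle \cdot,\cdot\rangle_{1/f}$ introduced in Section~\ref{sec:preliminaries} (note $1/f$ is essentially bounded and bounded below since $c_f \leq f \leq \|f\|_\infty$ on $\F_\subs$; for general $\F \in \FFF$ one uses the uniform lower bound $\inf_n f_n > c$). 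Because $c_f \leq f \leq \|f\|_\infty$, the norm $\|\cdot\|_{1/f}$ is equivalent to $\|\cdot\|_2$ on $(L^2)^d$. Hence, given $\Phi_y \in \PPPhi_y$, for any $\phi \in \Phi_y^\sups \subset \Phi_y^\supw$ there is a sequence $\phi_n \in \Phi_y$ with $\|\phi_n - \phi\|_2 \to 0$, so $\|\phi_n\|_{1/f} \to \|\phi\|_{1/f}$; conversely $\Phi_y \subset \Phi_y^\supw$ and a density/approximation argument from the $\|\cdot\|_2$-density of $\Phi_y^\sups$ in $\Phi_y^\supw$ (Lemma~\ref{lem:spacesdense}) gives the reverse inequality. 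Therefore
\begin{align*}
\inf_{\phi \in \Phi_y^\sups} \int \frac{|\phi|^2}{2f}\, d\P
= \inf_{\phi \in \Phi_y^\supw} \int \frac{|\phi|^2}{2f}\, d\P
= \inf_{\phi \in \Phi_y} \int \frac{|\phi|^2}{2f}\, d\P
\end{align*}
for every admissible $f$, which already yields the first assertion (the $f$-space being untouched). One subtlety to check carefully: the integral $\int |\phi|^2/(2f)\, d\P$ must be interpreted for all $\phi \in \Phi_y^\supw$, and finiteness follows from $f \geq c_f$.

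\textbf{Step 2: Invariance of the outer infimum over $f$, assuming $V \in L^2$.}
Here one must show that replacing $\F_\subs$ by any $\F \in \FFF$ leaves
\begin{align*}
\inf_{f \in \F_\subs}\left[\left(\int \frac{|\nabla f|^2}{8f} + Vf\, d\P\right)\left(\inf_{\phi}\int\frac{|\phi|^2}{2f}\,d\P\right)\right]^{1/2}
\end{align*}
unchanged. The natural route is to substitute $f = g^2$, rewriting $\int |\nabla f|^2/(8f)\,d\P = \int |\nabla g|^2/2\, d\P$ and $\int Vf\,d\P = \int V g^2\, d\P$, so the first factor becomes $\frac12\|\nabla g\|_2^2 + \int Vg^2\,d\P$, a quantity continuous in $g$ with respect to $\|\cdot\|_\nabla$ (using $V \in L^2$ and uniform boundedness of the approximants to control $\int V g_n^2\,d\P \to \int V g^2\,d\P$ via $\|g_n^2 - g^2\|_2 \leq \|g_n-g\|_2\,\|g_n+g\|_\infty$). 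Given $f \in \F_\subw$ (hence also for $f \in \F_\subs \subset \F_\subw$), by definition of $\FFF$ there is a sequence $f_n \in \F$ with $\|f_n - f\|_\nabla \to 0$ and $\inf_n f_n > c > 0$; one then checks that both factors of the bracketed expression converge along $(f_n)$ to those of $f$. The inner-infimum factor $\inf_\phi \int |\phi|^2/(2f_n)\,d\P$ converges to $\inf_\phi \int |\phi|^2/(2f)\,d\P$ because $f_n \to f$ in $L^2$ and hence (along a subsequence) $\P$-a.s., and the uniform lower bound $f_n > c$ together with dominated convergence controls $\int|\phi|^2/(2f_n)\,d\P \to \int |\phi|^2/(2f)\,d\P$ for fixed $\phi$, while a uniform estimate lets one pass the infimum through. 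Combining with Step~1 (applied with the common space $\Phi_y^\supw$, which already absorbs every $\Phi_y \in \PPPhi_y$), one gets $\Gamma_V$ computed over $\F$ equals $\Gamma_V$ computed over $\F_\subs$; since $\F_\subs \in \FFF$ by Lemma~\ref{lem:spacesdense}, both equal the value over $\F_\subw$.

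\textbf{Main obstacle.}
I expect the delicate point to be the joint continuity of the two-factor product along the approximating sequence when exchanging the outer $\inf_f$: specifically, justifying that $\inf_\phi \int |\phi|^2/(2f_n)\,d\P \to \inf_\phi \int |\phi|^2/(2f)\,d\P$ requires more than pointwise convergence of integrals — one needs the uniform lower bound $f_n > c$ (supplied by the definition of $\FFF$, and this is precisely why $\FFF$ is defined with that clause rather than mere $\|\cdot\|_\nabla$-density) to get a two-sided norm equivalence $\|\cdot\|_{1/f_n} \asymp \|\cdot\|_2$ uniform in $n$, so that $\inf_\phi$ is uniformly comparable and the limit can be taken. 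A clean way to organise this is to show $f \mapsto \inf_{\phi \in \Phi_y^\supw}\int|\phi|^2/(2f)\,d\P$ is continuous on $\{f \in \F_\subw : f \geq c\}$ with respect to $\|\cdot\|_2$ for each fixed $c>0$, e.g.\ by noting $\bigl|\,\|\phi\|_{1/f_n}^2 - \|\phi\|_{1/f}^2\,\bigr| = \bigl|\int |\phi|^2 (f - f_n)/(f f_n)\,d\P\bigr| \leq c^{-2}\int |\phi|^2 |f - f_n|\,d\P$ and that the latter, for $\phi$ ranging over a minimizing sequence that can be taken with $\|\phi\|_2$ bounded, is small. The $V \in L^2$ hypothesis enters only in Step~2 and only to make $g \mapsto \int Vg^2\,d\P$ continuous; consistent with the remark in the text that for $\Gamma_V = \RR(\sigma)$ one needs $V \in L^2$ but for the $\phi$-space replacement alone no integrability beyond $V \in L^1$ is used, and no ergodicity is invoked anywhere in this proof.
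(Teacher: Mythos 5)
Your toolkit — norm equivalence induced by $f\geq c$, density of the nice test spaces, and convergence along $\FFF$-approximating sequences — is the right one, and Step~1 is essentially correct: for each fixed $f$ with $c_f\leq f\leq\|f\|_\infty$ the map $\phi\mapsto\int|\phi|^2/(2f)\,d\P$ is $\|\cdot\|_2$-continuous, so density of $\Phi_y\in\PPPhi_y$ in $\Phi_y^\supw$ forces all these infima to coincide. But Step~2 has a genuine gap, and it is precisely the one you flag under ``Main obstacle'': continuity of $W(f):=\inf_{\phi\in\Phi_y^\supw}\int|\phi|^2/(2f)\,d\P$ in $f$ is not established by your estimate. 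The bound $\bigl|\int|\phi|^2/(2f_n)\,d\P-\int|\phi|^2/(2f)\,d\P\bigr|\leq c^{-2}\int|\phi|^2\,|f-f_n|\,d\P$ is correct, but it cannot be made small uniformly over a minimizing sequence $(\phi)$: you control only $\|\phi\|_2$, while $\int|\phi|^2\,|f-f_n|\,d\P$ needs $\phi\in L^4$ (for a Cauchy--Schwarz split), or $\phi$ essentially bounded, or $\|f-f_n\|_\infty\to 0$ — none of which is supplied by $\Phi_y^\supw$ or by the definition of $\FFF$ (which only gives $\|f_n-f\|_\nabla\to 0$ and a uniform lower bound, not an upper one). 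So lower semi-continuity of $W$ does not follow.

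The paper's proof avoids treating $W$ as a function of $f$ altogether. It writes $\Gamma_V(y)$ as the double infimum $\inf_f\inf_\phi$ and exchanges them. For each \emph{fixed} $\phi\in\Phi_y^\sups$ — crucially, a \emph{bounded} vector field — the whole bracket is continuous in $f$ along $\FFF$-sequences, because $\int|\phi|^2/(2f_n)\,d\P\to\int|\phi|^2/(2f)\,d\P$ by $\phi$ bounded, $f_n\geq c$ and $f_n\to f$ in $L^2$, while $K(f_n)\to K(f)$ by the $|\nabla f_n|^2/f_n\to|\nabla f|^2/f$ argument and $\E[|Vf-Vf_n|]\leq\|V\|_2\|f-f_n\|_2$. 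Hence $\inf_{f\in\F}=\inf_{f\in\F_\subw}$ for each such $\phi$, and taking $\inf_\phi$ finishes; the $\Phi_y$-replacement is then symmetric (fix $f$, vary $\phi$). Incidentally, your detour through $g=\sqrt f$ is unnecessary and introduces a spurious reliance on $\|g_n+g\|_\infty$, which $\FFF$ does not bound uniformly; $g_n^2-g^2=f_n-f$ and the $\|V\|_2\|f_n-f\|_2$ estimate works directly.
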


\begin{proof}
By definition
\begin{align}\label{e:infinf=infinf}
\Gamma_V(y)
= 2\inf_{f \in \F_\subs} \inf_{\phi \in \Phi_y^\sups} \left[ \left( \int \frac{|\nabla f|^2}{8f} + Vf d\P \right) \left( \int \frac{|\phi|^2}{2f} d\P \right)\right]^{1/2}.
\end{align}

Let $\F \in \FFF$ and choose $f \in \F_\subw$. There is $(f_n)_n \subset \F$  and $c>0$ such that $f_n \to f$ in $\|\cdot\|_{\nabla}$, $\inf_{n \in \N } f_n > c$ and $f>c$ $\P$-a.s.. Since $\partial_i f_n \to \partial_i f$ in $L^2$, also
\begin{align*}
\E[|(\partial_i f_n)^2 - (\partial_i f)^2| ] \to 0,
\end{align*}
in fact, 
$\E[|(\partial_i f_n)^2 - (\partial_i f)^2|] 
= \E[|(\partial_i f_n - \partial_i f)(\partial_i f_n + \partial_i f)|]
\leq \|\partial_i f_n - \partial_i f\|_2 \|\partial_i f_n + \partial_i f\|_2
\leq \|\partial_i f_n - \partial_i f\|_2 C$
for some constant $C>0$. Hence, $\P$-a.s.,
\begin{align*}
\big||\nabla f_n|^2/f_n - |\nabla f|^2/f\big|
&\leq {\big(|(\nabla f_n)^2 f - (\nabla f)^2 f| + |(\nabla f)^2 f - (\nabla f)^2f_n|\big)}/{(f f_n)}\\
&\leq c^{-2}\sup\{\|f\|_\infty,\|\nabla f\|_\infty^2\} \big( \sum_i |(\partial_i f_n )^2 - (\partial_i f)^2| + |f - f_n|\big),
\end{align*}
which shows $(\nabla f_n)^2 / f_n \to (\nabla f)^2 / f$ in $L^1$. In particular, $\E[(\nabla f_n)^2 / f_n]$ $\to$ $\E[(\nabla f)^2 / f]$. $\E[|Vf-Vf_n|] \leq \|V\|_2 \|f-f_n\|_2$ which converges to $0$ if $V \in L^2$. Analogously, for any $\phi \in \Phi_y^\sups$, $\E[|\phi|^2 / f_n] \to \E[|\phi|^2 /f]$ as $n \to \infty$ since $\phi$ is bounded.
Lemma \ref{lem:spacesdense} ensures $\F_\subs \in \FFF$. \eqref{e:infinf=infinf} and the fact that $\inf_f \inf_\phi \ldots = \inf_\phi \inf_f \ldots$ thus imply robustness with respect to the choice of $\F \in \FFF$ for $\Phi_y:= \Phi_y^\sups$ fixed and for $V \in L^2$. An analogous reasoning using $\Phi_y^\sups \in \PPPhi$, see Lemma \ref{lem:spacesdense}, leads to independence of the choice of $\Phi_y \in \PPPhi_y$.
\end{proof}

\section{Proof of the Variational Formula}\label{sec:proofofvarform}

In this section we are going to prove Theorem \ref{t:varform}. We follow closely the proof developed in \cite{Schroeder88} for the periodic case.

We may restrict our consideration to $y \in S^{d-1}$. Indeed, on the one hand
\begin{align*}
\lim_{r \to \infty} -(1/r) g(0,ry,\omega) = |y| \lim_{r \to \infty} -(1/r) g(0,ry/|y|,\omega)
\end{align*}
On the other hand $\Gamma_V$ is positive homogeneous:

\begin{lemma}
Let $V$ be a potential, then for $c\geq 0$, for $y \in \R^d$,
\begin{align}\label{e:linearity}
\Gamma_V(cy) = c \Gamma_V(y).
\end{align}
\end{lemma}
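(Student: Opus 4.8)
The plan is to verify \eqref{e:linearity} by a direct change of variables in the two infima appearing in the definition \eqref{eq:definition_varform} of $\Gamma_V$. The case $c=0$ is immediate: if $c=0$ then the constraint $\E[\phi]=cy=0$ is met by $\phi\equiv 0$, which is divergence-free and lies in $\D_\subs^d$, so the inner infimum $\inf_{\phi\in\Phi_0^\sups}\int|\phi|^2/(2f)\,d\P$ equals $0$ for every $f\in\F_\subs$, whence $\Gamma_V(0)=0=0\cdot\Gamma_V(y)$; note this is consistent with the convention $\alpha_V(0):=0$. So assume $c>0$.

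For $c>0$ I would exhibit a bijection between the admissible vector fields for $y$ and those for $cy$. Concretely, the map $\phi\mapsto c\phi$ sends $\Phi_y^\sups$ onto $\Phi_{cy}^\sups$: differentiability of any order, boundedness of all derivatives, and the pointwise identity $\nabla\cdot(c\phi)=c\,\nabla\cdot\phi=0$ are all preserved, while $\E[c\phi]=c\,\E[\phi]=cy$; the inverse is $\psi\mapsto c^{-1}\psi$. Under this substitution, for a fixed $f\in\F_\subs$,
\begin{align*}
\inf_{\psi\in\Phi_{cy}^\sups}\int\frac{|\psi|^2}{2f}\,d\P
=\inf_{\phi\in\Phi_y^\sups}\int\frac{|c\phi|^2}{2f}\,d\P
=c^2\inf_{\phi\in\Phi_y^\sups}\int\frac{|\phi|^2}{2f}\,d\P.
\end{align*}
Substituting this into \eqref{eq:definition_varform} and pulling the factor $c^2$ out of the square root (legitimate since $c>0$) gives
\begin{align*}
\Gamma_V(cy)
=2\inf_{f\in\F_\subs}\left[\left(\int\frac{|\nabla f|^2}{8f}+Vf\,d\P\right)c^2\inf_{\phi\in\Phi_y^\sups}\int\frac{|\phi|^2}{2f}\,d\P\right]^{1/2}
=c\,\Gamma_V(y),
\end{align*}
where in the last step the $f$-infimum is unaffected by the constant multiplicative factor $c$, which may be taken outside the infimum.

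There is no real obstacle here; the only points that need a word of care are (i) checking that scaling a vector field by a positive constant genuinely stays inside $\Phi_y^\sups$ (so that the infima are over honestly corresponding sets and no admissible competitor is lost), and (ii) the degenerate case $c=0$, where one must observe that $\Phi_0^\sups$ contains the zero field so the product vanishes. Both are routine. I would present the argument in essentially the three displays above, preceded by the sentence handling $c=0$.
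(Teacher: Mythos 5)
Your proposal is correct and follows exactly the paper's argument: handle $c=0$ by taking $\phi\equiv 0$, and for $c>0$ use the bijection $\phi\mapsto c\phi$ from $\Phi_y^\sups$ onto $\Phi_{cy}^\sups$ to pull the factor $c$ out of the variational expression. No gaps.
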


\begin{proof}
$\Gamma_V(0) = 0$ by choosing $\phi\equiv 0$ in the variational expression of $\Gamma_V(0)$. For $c>0$ consider the mapping $\rho_c: \Phi_y^\sups \to \Phi_{cy}^\sups$, $\phi \mapsto c\phi$. $\rho_c$ is bijective. In particular, $c\Gamma_V(y) = \Gamma_V(cy)$ and \eqref{e:linearity} follows.
\end{proof}

\subsection{Upper Bound}\label{subsec:upperbound}

Throughout this subsection we assume $(\Omega,\FF,\P,\tau)$ to be an ergodic dynamical system and $V$ to be a potential which satisfies (B) and (G). We set $\F:=\F_\subs$ and $\Phi_y:= \Phi_y^\sups$. In this subsection we prove:

\begin{proposition}\label{p:moregeneralupperbound}
For $y \in \R^d \setminus \{0\}$ $\P$-a.s.,
\begin{align*}
\limsup_{r \to \infty} - (1/r)\ln g(0,r y,\omega) \leq \Gamma_V(y).
\end{align*}
\end{proposition}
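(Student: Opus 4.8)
# Proof proposal for Proposition \ref{p:moregeneralupperbound}

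\emph{Strategy and choice of drift.} Having reduced to $y\in S^{d-1}$, I would first pass from the Green function to the Green measure. By (G), $g(0,\cdot,\omega)$ is a positive classical solution of $-\tfrac12\Laplace u+V_\omega u=0$ on $B(ry,1)$ once $r>1$, so the Harnack inequality (\cite{Gilbarg83}; by (B) the constant depends only on $d$ and $\vmax$) yields a constant $C_0=C_0(d,\vmax)$ with $g(0,ry,\omega)\ge C_0^{-1}G(0,B(ry,\tfrac12),\omega)$ for all large $r$. It thus suffices to lower bound $G(0,B(ry,\tfrac12),\omega)$. Now fix $f\in\F_\subs$, $\phi\in\Phi_y^\sups$ and a scalar $\kappa>0$, and set $b:=\tfrac{\nabla f}{2f}+\kappa\tfrac{\phi}{f}$, $b_\omega(x):=b(\tau_x\omega)$; since $f\ge c_f>0$ and $\nabla f,\phi$ are bounded with bounded derivatives, $b_\omega$ is bounded and smooth with bounded derivatives, so $P_0^b$ is well defined. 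The drift is designed so that, first, $\E[bf]=\tfrac12\E[\nabla f]+\kappa\E[\phi]=\kappa y$ by \eqref{eq:partialintegration} and the definition of $\Phi_y^\sups$, and second, $f\,d\P$ is invariant for the environment process $\omega_t:=\tau_{Z_t}\omega$ under $P_0^b$: its generator is $\tfrac12\Laplace+b\cdot\nabla$ on $\Omega$, whose $L^2(\P)$-adjoint annihilates $f$ because $fb=\tfrac12\nabla f+\kappa\phi$ with $\nabla\cdot\phi=0$, so $\tfrac12\Laplace f-\nabla\cdot(fb)=0$. Since the elliptic part is non-degenerate and $\P$ is ergodic under $\tau$, the environment process is ergodic with invariant law $f\,d\P$.

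\emph{Girsanov computation.} Writing $\widetilde W$ for the $P_0^b$-Brownian motion, so $dZ_s=b(\omega_s)\,ds+d\widetilde W_s$, the Cameron--Martin--Girsanov formula (\cite{Karatzas1991}) gives
\[
G(0,A,\omega)=E_0^b\!\Big[\int_0^\infty e^{-\int_0^t V_\omega(Z_s)\,ds}\,e^{-\int_0^t b(\omega_s)\cdot d\widetilde W_s-\frac12\int_0^t|b(\omega_s)|^2 ds}\,1_A(Z_t)\,dt\Big].
\]
Splitting $b=\nabla(\tfrac12\ln f)+\kappa\tfrac{\phi}{f}$ and applying It\^o's formula to $s\mapsto\tfrac12\ln f_\omega(Z_s)$ to absorb the gradient part of the stochastic integral, the exponent inside the expectation equals, for each fixed $t$,
\[
\tfrac12\ln\tfrac{f_\omega(Z_0)}{f_\omega(Z_t)}\;-\;\kappa M_t\;+\;\int_0^t\Psi(\omega_s)\,ds,\qquad \Psi:=\tfrac12\,\tfrac{\Laplace\sqrt f}{\sqrt f}-V-\tfrac{\kappa^2}{2}\,\tfrac{|\phi|^2}{f^2},
\]
where $M_t:=\int_0^t\tfrac{\phi}{f}(\omega_s)\cdot d\widetilde W_s$ is a continuous martingale with $\langle M\rangle_t\le(\|\phi\|_\infty/c_f)^2\,t$. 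Using $\tfrac12\tfrac{\Laplace\sqrt f}{\sqrt f}=\tfrac{\Laplace f}{4f}-\tfrac{|\nabla f|^2}{8f^2}$ and the integration by parts \eqref{eq:partialintegration} (whence $\E[\sqrt f\,\Laplace\sqrt f]=-\E[|\nabla\sqrt f|^2]=-\E[\tfrac{|\nabla f|^2}{4f}]$), one finds
\[
\E[\Psi f]=-\Big(A+\kappa^2 B\Big),\qquad A:=\int\tfrac{|\nabla f|^2}{8f}+Vf\,d\P,\quad B:=\int\tfrac{|\phi|^2}{2f}\,d\P,
\]
and $\Psi$ is bounded (here (B) and the definitions of $\F_\subs$, $\Phi_y^\sups$ enter), with $0<A,B<\infty$.

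\emph{Ergodic theorem and conclusion.} By the ergodic theorem for the point of view of the particle \cite[(A.9)]{DelTenno2009SpecialExamples}, applied to the $f\,d\P$-invariant, ergodic environment process, for $\P$-a.e.\ $\omega$ one has $P_0^b$-a.s.\ that $\tfrac1t\int_0^t\Psi(\omega_s)\,ds\to-(A+\kappa^2B)$ and $\tfrac1t\int_0^t b(\omega_s)\,ds\to\kappa y$, hence $Z_t/t\to\kappa y$; moreover $M_t/t\to0$ a.s.\ (the quadratic variation grows at most linearly) and $\tfrac12\ln(f_\omega(Z_0)/f_\omega(Z_t))$ stays bounded since $c_f\le f\le\|f\|_\infty$. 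Thus, along $t_r:=r/\kappa$, the integrand of $G(0,B(ry,\tfrac12),\omega)$ evaluated near time $t_r$ is $\exp\{-(A+\kappa^2B)t_r(1+o(1))\}$ while $Z_{t_r}$ lies within $o(r)$ of $ry$; localizing $Z$ to $B(ry,\tfrac12)$ over a window about $t_r$ at sub-exponential $P_0^b$-cost and combining with $g(0,ry,\omega)\ge C_0^{-1}G(0,B(ry,\tfrac12),\omega)$ gives, for $\P$-a.e.\ $\omega$, $\limsup_{r\to\infty}-\tfrac1r\ln g(0,ry,\omega)\le(A+\kappa^2B)/\kappa$. Minimizing over $\kappa>0$ yields $2\sqrt{AB}$ (attained at $\kappa=\sqrt{A/B}$); since $\Gamma_V(y)=2\inf_{f\in\F_\subs}\inf_{\phi\in\Phi_y^\sups}(AB)^{1/2}$ by \eqref{eq:definition_varform}, choosing a countable family of $(f,\phi,\kappa)$ making $(A+\kappa^2B)/\kappa$ approach $\Gamma_V(y)$ (so the exceptional $\P$-null sets can be unioned) proves $\limsup_{r\to\infty}-\tfrac1r\ln g(0,ry,\omega)\le\Gamma_V(y)$ $\P$-a.s.

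\emph{Main obstacle.} The conceptual engine is the environment-process ergodic theorem; the genuinely delicate analytic point, trivial in Schroeder's compact torus setting, is the localization step in the last paragraph: one must show that the $\kappa y$-ballistic diffusion $Z$ spends order-one time in the unit-scale ball $B(ry,\tfrac12)$ near time $t_r$ with $P_0^b$-probability at least $e^{-o(r)}$ (a polynomial lower bound already suffices, contributing only $o(1)$ to the decay rate), and that this localization event can be taken jointly with the ergodic averages of $\Psi$, of $b$ and of $\langle M\rangle$ being close to their limits. This requires quantitative (Aronson-type) Gaussian lower bounds for the tilted diffusion — available because $b_\omega$ is bounded and smooth — together with care in controlling the interplay between this localization event and the full-measure ergodic event, which is precisely where the non-compactness of $\Omega$ must be confronted.
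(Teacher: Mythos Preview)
Your overall architecture --- the drift $b=\nabla f/(2f)+\kappa\phi/f$, Girsanov, the ergodic theorem for the environment process from \cite{DelTenno2009SpecialExamples}, optimisation over $\kappa>0$, and the countable reduction for the $\P$-null sets --- matches the paper's. The essential difference, and the source of your self-identified ``main obstacle'', is the scale of the target ball.

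You reduce via Harnack to $G(0,B(ry,\tfrac12),\omega)$ with a \emph{fixed} radius. The ergodic theorem only gives $Z_{t_r}=ry+o(r)$, so placing $Z_t$ into a unit ball near time $t_r$ is a genuine localisation problem. Your proposal to pay a sub-exponential price via Aronson-type lower bounds and then intersect with the ergodic event is plausible in outline but is not carried out; moreover, once you restrict to a small-probability localisation event, the a.s.\ statements $M_t/t\to0$ and $\tfrac1t\int_0^t\Psi(\omega_s)\,ds\to-(A+\kappa^2B)$ no longer control the integrand there, and the required joint control is exactly the delicate point you flag but do not resolve. As written this is a gap.

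The paper avoids the issue entirely by targeting a \emph{macroscopic} ball $B(ry,\epsilon r)$. Lemma~\ref{lem:g_geq} (a martingale argument using (G) and (B)) gives
\[
g(0,ry,\omega)\ \ge\ c_d(\epsilon r)^{-d}\,e^{-\epsilon\sqrt{2\vmax}\,r}\int_{B(ry,\epsilon r)}g(0,z,\omega)\,dz,
\]
so the price of the large ball is precisely $\epsilon\sqrt{2\vmax}$ in the rate. Now the a.s.\ convergence $Z_s/s\to\kappa y$ directly forces the path to spend time at least $(\epsilon-\delta)r/\kappa$ inside $B(ry,\epsilon r)$ on an event $\AAa_\omega$ of $P_0^{b_\omega}$-probability tending to $1$ (Lemma~\ref{l:gammato1}); no Gaussian bound and no small-probability event enter. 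The stochastic integral is handled on this high-probability event by Jensen and It\^o isometry (Lemma~\ref{eq:zwischenergebnis2}), contributing only $O(\sqrt r)$ to the exponent. Sending $\epsilon\to0$ at the very end removes the $\epsilon\sqrt{2\vmax}$ error. Your It\^o-formula absorption of the gradient part of $\int b\cdot d\widetilde W$ is a neat alternative to the paper's Jensen step, but it does not touch the localisation; the $\epsilon r$-scale ball is the device that makes the argument close.
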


We start with

\begin{lemma}\label{lem:g_geq}
Let $0<\epsilon<1$. For $y \in \R^d \setminus \{0\}$, for $\omega \in \Omega$, 
\begin{align*}
g(0,y,\omega) \geq c_d (\epsilon |y|)^{-d} e^{-\epsilon \sqrt{2 \vmax} |y|} \int_{B(y,\epsilon|y|)} g(0,z,\omega)dz,
\end{align*}
where $c_d$ is a constant only depending on dimension $d$.
\end{lemma}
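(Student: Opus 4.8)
The plan is to compare $g(0,y,\omega)$ with its average over the small ball $B(y,\epsilon|y|)$ using the fact, from condition (G), that $u:=g(0,\cdot,\omega)$ solves $L_\omega u = (1/2)\Laplace u - V_\omega u = 0$ on $\R^d\setminus\{0\}$, hence $(1/2)\Laplace u = V_\omega u \geq 0$ on the ball $B(y,\epsilon|y|)$, which does not contain the origin since $\epsilon<1$. So $u$ is subharmonic there, but we need a \emph{lower} bound on the value at the centre by the average — which goes the wrong way for subharmonic functions. The right tool is therefore a Harnack-type inequality, or more elementarily a representation of $u$ on the ball via the heat semigroup with killing.

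Concretely, I would proceed as follows. Fix $y$ and write $B:=B(y,\epsilon|y|)$, $x_0:=y$. First I would observe that $u=g(0,\cdot,\omega)$ is, by its probabilistic definition in \eqref{e:def_green}, the Green function of Brownian motion killed at rate $V_\omega$, and that for any open set $B$ not containing $0$ one has the strong Markov / last-exit decomposition
\begin{align*}
g(0,z,\omega) = E_z\Big[\exp\Big\{-\int_0^{H_{B^c}} V_\omega(Z_s)\,ds\Big\}\, g(0,Z_{H_{B^c}},\omega)\Big]
\end{align*}
valid for $z\in B$, where $H_{B^c}$ is the exit time from $B$. Alternatively, and this is the route I expect to use, I would run the process from $z$ until the fixed time when it first reaches distance, say, comparable to $\epsilon|y|$ — more precisely I would write the occupation-time integral defining $g(0,z,\omega)$, restrict the time integral to the initial excursion inside $B$ plus the rest, and use the Markov property at time $t$ to get, for any $t>0$,
\begin{align*}
g(0,z,\omega) \geq E_z\Big[\exp\Big\{-\int_0^{t} V_\omega(Z_s)\,ds\Big\}\, 1_{\{Z_s\in B \text{ for } s\le t\}}\, g(0,Z_t,\omega)\Big].
\end{align*}
Using $V_\omega\le\vmax$ on $B$ (from (B)), the exponential factor is bounded below by $e^{-\vmax t}$. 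The plan is then to integrate this inequality appropriately, or rather to run the argument in the reverse direction: to bound $\int_B g(0,z,\omega)\,dz$ above by a multiple of $g(0,y,\omega)$.

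The cleanest realisation: start Brownian motion at $y$, and for $z\in B$ estimate $g(0,y,\omega)$ from below by forcing the path to visit a neighbourhood of $z$. Since $V_\omega$ is only bounded, not continuous, I would instead integrate over $z$ directly. Starting from $y$, and letting the path evolve for a time of order $(\epsilon|y|)^2$ while staying inside a ball of radius $\epsilon|y|$ around $y$ (an event of probability bounded below by a dimensional constant, by the standard small-ball estimate / reflection principle), we have on that event $\int_0^t V_\omega(Z_s)\,ds \le \vmax t \le c_d' \vmax (\epsilon|y|)^2$; this is not yet the bound $e^{-\epsilon\sqrt{2\vmax}|y|}$ claimed, so a finer time scale is needed — one should take $t$ of order $\epsilon|y|/\sqrt{\vmax}$ so that $\vmax t \asymp \epsilon\sqrt{\vmax}|y|$, at which scale the path starting at $y$ typically moves only distance $\sqrt{t}\asymp (\epsilon|y|/\sqrt{\vmax})^{1/2}$, still $\ll \epsilon|y|$ for large $|y|$. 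Hence with probability bounded below by $c_d$ the path from $y$ stays in $B$ up to time $t$ and at time $t$ is roughly uniformly spread on a ball of radius $\sqrt t$ around $y$; by the Markov property
\begin{align*}
g(0,y,\omega) \ge e^{-\vmax t}\, E_y\big[\,g(0,Z_t,\omega)\,1_{\{Z_s\in B,\ s\le t\}}\,\big] \ge c_d\, e^{-\epsilon\sqrt{2\vmax}|y|}\, (\epsilon|y|)^{-d}\int_B g(0,z,\omega)\,dz,
\end{align*}
where the volume factor $(\epsilon|y|)^{-d}$ comes from comparing the Gaussian transition density $p_t(y,z)$ at time $t$ (whose value on $B$ I would bound below after possibly conditioning on a slightly longer excursion, or by a chaining argument moving the mass from the small ball of radius $\sqrt t$ out to all of $B$ in finitely many Markov steps, each step costing a dimensional constant and $t$ of order $\epsilon|y|/\sqrt\vmax$ in total) with the uniform density $c_d(\epsilon|y|)^{-d}$ on $B$. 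The main obstacle I anticipate is precisely this: getting the transition density (or the occupation density) to be bounded below on \emph{all} of $B$, uniformly, with only the exponential loss $e^{-\epsilon\sqrt{2\vmax}|y|}$ and the polynomial volume loss — this is a non-uniform-ellipticity-free estimate and the clean way is a finite chain of Markov-property applications along a path from $y$ to $z$ inside $B$, controlling at each step the probability of a unit (rescaled) Brownian displacement and the accumulated killing $\int V_\omega\le \vmax\cdot(\text{total time})$, the total time being tuned to $\asymp \epsilon|y|/\sqrt\vmax$ to produce the stated exponent $\epsilon\sqrt{2\vmax}|y|$ (with the $\sqrt2$ coming from the generator $(1/2)\Laplace$). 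Everything else — measurability in $\omega$, the reduction to $z\in B$ via Fubini, and collecting the dimensional constants into $c_d$ — is routine.
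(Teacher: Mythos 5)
Your approach is genuinely different from the paper's, and it leaves the key estimate as a sketch that, if carried out, would in general produce a different constant in the exponent than the $\sqrt{2\vmax}$ in the statement.

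The paper (following Schroeder) never works with a deterministic time $t$ and never estimates a transition density. Instead it stops the $L_\omega$-harmonic martingale $s\mapsto e^{-\int_0^{s}V_\omega(Z_u)du}\,g(0,Z_s,\omega)$ at the exit time $\tau_r$ of the ball $B(y,r)$ for each $0<r<\epsilon|y|$, and then lets $t\to\infty$ to obtain the exact identity $g(0,y,\omega) = E_y\bigl[e^{-\int_0^{\tau_r}V_\omega\,du}\,g(0,Z_{\tau_r},\omega)\bigr]$. Starting from the centre of the ball, rotational invariance makes $\tau_r$ and $Z_{\tau_r}$ independent, with $Z_{\tau_r}$ uniform on $\partial B(y,r)$; replacing $V_\omega$ by $\vmax$ and using Brownian scaling together with the Laplace transform of the exit time of the unit ball gives $E_y[e^{-\vmax\tau_r}]\ge e^{-\sqrt{2\vmax}\,r}$. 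Multiplying by the surface area $\sigma_{d-1}r^{d-1}$ and integrating $r$ from $0$ to $\epsilon|y|$ in polar coordinates sweeps out the ball, producing both the volume prefactor $(\epsilon|y|)^{-d}$ and the exponent $e^{-\epsilon\sqrt{2\vmax}|y|}$ in one stroke, with $c_d=1/|B(0,1)|$.

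Your route uses the martingale at a deterministic time $t\asymp\epsilon|y|/\sqrt{\vmax}$ and then seeks a lower bound, uniform over $z\in B$, for the sub-probability density of $Z_t$ on $\{\tau_B>t\}$. You correctly identified this as the crux, and the chaining you sketch can in principle be made rigorous, but note two things. First, it is a nontrivial piece of work that your proposal does not carry out: each step of the chain requires a uniform lower bound for staying in a tube, the chain must avoid leaking through $\partial B$, and the number of steps grows linearly in $\epsilon|y|$. Second, even if completed, the exponent you obtain is $e^{-C\sqrt{\vmax}\,\epsilon|y|}$ with $C$ determined by the per-step survival probability (roughly $C=\sqrt{\ln(1/p_0)}$ for the dimensional constant $p_0$), and there is no reason for this $C$ to equal $\sqrt 2$; in general it will be larger. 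So the lemma as literally stated would not follow. This is harmless for the application in Proposition~\ref{p:moregeneralupperbound}, where $\epsilon\to 0$ at the end so any dimensional constant times $\sqrt{\vmax}$ would do, but it does mean your argument proves a weaker statement than the one printed. The paper's polar-coordinate argument is both shorter and gives the sharp $\sqrt{2\vmax}$, because the Laplace transform of the exit time carries that constant exactly and the volume factor is a one-line polar integral rather than a density estimate.
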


\begin{proof}
Choose $0<r<\epsilon|y|$. For $f \in C^2(\R^d)$ bounded and with bounded derivatives up to order two, under $P_y$,
\begin{align}\label{eq:martingale}
e^{-\int_0^t V_\omega(Z_s) ds} f(Z_t)
- \int_0^t (\tfrac{1}{2} \Delta f - V_\omega f)(Z_s) e^{- \int_0^s V_\omega(Z_u) du} ds, \ t \geq 0,
\end{align}
is a martingale with respect to $(\FF_t)_{t \geq 0}$, where $\FF_t:=\sigma(Z_s, 0 \leq s \leq t)$, see \cite[Theorem 2.4.2(ii)]{Pinsky1995}. By (G) one has $g(0,\cdot,\omega) \in C^2(\R^d\setminus\{0\})$ and we may choose in \eqref{eq:martingale} $f \in C^2(\R^d)$ such that $f=g(0,\cdot,\omega)$ on a neighbourhood of $B(y,r)$ which does not contain the origin. Let $\tau$ be the first exit time of $B(y,r)$. $\tau$ is a stopping time for $(\FF_t)_{t\geq 0}$, see \cite[Problem 1.2.7]{Karatzas1991}. Considering the stopped martingale, see \cite[Problem 1.3.24]{Karatzas1991}, (G) implies that
\begin{align*}
e^{-\int_0^{t \wedge \tau} V_\omega(Z_s) ds} g(0,Z_{t\wedge\tau},\omega), \ t \geq 0,
\end{align*}
is a martingale under $P_y$ with respect to $(\FF_t)_{t\geq0}$. Now, using (B) the proof can be completed as in \cite[Lemma 2.1]{Schroeder88}.
\end{proof}

Lemma \ref{lem:g_geq} implies for $y \in S^{d-1}$, for $\omega \in \Omega$,
\begin{align*}
\limsup_{r \to \infty} - \frac{1}{r} \ln g(0,ry,\omega) 
& \leq \epsilon \sqrt{2\vmax}  + \limsup_{r \to \infty} - \frac{1}{r} \ln \int_{B(ry,\epsilon r )} g(0,z,\omega) dz.
\end{align*}
This holds for arbitrary $0<\epsilon<1$; therefore, in order to prove Proposition \ref{p:moregeneralupperbound} it is sufficient to estimate
\begin{align*}
\limsup_{r \to \infty} - \frac{1}{r} \ln \int_{B(ry,\epsilon r)} g(0,z,\omega)dz.
\end{align*}

For the following choose $y \in S^{d-1}$ and let $r,\epsilon >0$.

\begin{lemma}\label{lem:feynman_kac_appl}
For $\omega \in \Omega$, $t\geq 0$,
\begin{align}\label{eq:feynman_kac_appl}
\int_{B(ry,\epsilon r)} g(0,z,\omega) dz
\geq E\left[\exp\left\{-\int_0^t V_\omega(Z_s)ds \right\} \int_0^t 1_{B(ry,\epsilon r)}(Z_s) ds \right].
\end{align}
\end{lemma}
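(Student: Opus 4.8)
The plan is to recognise the left-hand side of \eqref{eq:feynman_kac_appl} as the Green \emph{measure} of the ball $B(ry,\epsilon r)$ and then to carry out two elementary monotonicity estimates. First I would observe that, since $g(0,\cdot,\omega)$ is the Lebesgue density of the Green measure $G(0,\cdot,\omega)$ and $\{0\}$ is Lebesgue-null, the definition \eqref{e:def_green} gives
\[
\int_{B(ry,\epsilon r)} g(0,z,\omega)\,dz = G(0,B(ry,\epsilon r),\omega) = E\left[\int_0^\infty \exp\left\{-\int_0^s V_\omega(Z_u)\,du\right\} 1_{B(ry,\epsilon r)}(Z_s)\,ds\right],
\]
where the interchange of $E$ and the $ds$-integral is licit by Tonelli's theorem, the integrand being non-negative and jointly measurable.

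The second step is purely pathwise. Since the integrand is non-negative, restricting the $s$-integration from $[0,\infty)$ to $[0,t]$ only decreases the value, and since $V_\omega\ge 0$ we have $\int_0^s V_\omega(Z_u)\,du \le \int_0^t V_\omega(Z_u)\,du$ for every $0\le s\le t$, so that $\exp\{-\int_0^s V_\omega(Z_u)\,du\}\ge \exp\{-\int_0^t V_\omega(Z_u)\,du\}$ along every path. Inserting this bound and pulling the now $s$-independent factor $\exp\{-\int_0^t V_\omega(Z_u)\,du\}$ out of the $ds$-integral yields
\[
G(0,B(ry,\epsilon r),\omega) \geq E\left[\exp\left\{-\int_0^t V_\omega(Z_s)\,ds\right\} \int_0^t 1_{B(ry,\epsilon r)}(Z_s)\,ds\right],
\]
which is exactly \eqref{eq:feynman_kac_appl}. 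Combining the two displays finishes the argument.

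I do not expect a genuine obstacle here: the only points meriting a word are the measurability/non-negativity bookkeeping behind the Tonelli interchange and the identification of $\int_{B}g(0,z,\omega)\,dz$ with $G(0,B,\omega)$, both immediate from the standing hypotheses (nonnegativity of the potential and the existence of the Green density). The estimate is deliberately lossy — it discards the occupation mass accumulated after time $t$ and weakens the Feynman--Kac weight from $\exp\{-\int_0^s\}$ to $\exp\{-\int_0^t\}$ — but this is precisely the form needed to subsequently optimise over the time horizon $t$ and the test functions in $\F$ and $\Phi_y$.
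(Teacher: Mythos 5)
Your proof is correct and follows exactly the same route as the paper: identify $\int_{B(ry,\epsilon r)}g(0,z,\omega)\,dz$ with the Green measure $G(0,B(ry,\epsilon r),\omega)$ via the density, interchange the outer time integral and the expectation by Tonelli, truncate to $[0,t]$, and use $V_\omega\ge 0$ to replace $\exp\{-\int_0^s V_\omega\}$ by $\exp\{-\int_0^t V_\omega\}$ before pulling it out of the $ds$-integral. Your version merely spells out the Tonelli step, which the paper leaves implicit.
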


\begin{proof}
This is a consequence of the definition of the Green function as the density for the Green measure, see \eqref{e:def_green}: Since $V \geq 0$,
\begin{align*}
\int_{B(ry,\epsilon r)} g(0,z,\omega)dz 
&= \int_0^\infty E\left[\exp\left\{-\int_0^s V_\omega(Z_u)du\right\} 1_{B(ry,\epsilon r)}(Z_s)\right]ds\\
& \geq E\left[\exp\left\{-\int_0^t V_\omega(Z_u)du\right\} \int_0^t 1_{B(ry,\epsilon r)}(Z_s)ds\right].\qedhere
\end{align*}
\end{proof}

Let $f \in \F$, $\phi \in \Phi_y$ and introduce for $a>0$ the drift
\begin{align*}
b:=\frac{\nabla f}{2f} + a \frac{\phi}{f}.
\end{align*}
Since $\phi$, $\nabla f$ are bounded and since $f$ is strictly bounded away from zero, $b$ is bounded. Consequently, for $\omega \in \Omega$ $b_\omega$ is bounded on $\R^d$ and there exists a solution $P_0^{b_\omega}$ to the martingale problem $\MP(b_\omega,\id)$ starting at $\delta_0$.
Introduce $\beta_t:=Z_t - \int_0^t b_\omega(Z_s) ds$. By Cameron-Martin-Girsanov formula $(\beta_t)_t$ is a Brownian motion under $P_0^{b_\omega}$, see \cite[Theorem 3.5.1, Corollary 3.5.13, proof of Proposition 5.3.6]{Karatzas1991}, and for any $t \geq 0$,
\begin{align*}
\frac{dP_0[(Z_s)_{s \leq t} \in \cdot]}{dP_0^{b_\omega}[(Z_s)_{s \leq t} \in \cdot]}
& =\exp\left\{-\int_0^t b_\omega(Z_s) d\beta_s - \frac{1}{2}\int_0^t |b_\omega|^2(Z_s)ds\right\},
\end{align*}
where we used the distributive law for stochastic integration, see \cite[(2.8.4)]{Durrett96}. 
This and \eqref{eq:feynman_kac_appl} lead to

\begin{lemma}\label{lem:cameronmartin}
For all $\omega \in \Omega$, $t\geq 0$,
\begin{align*}
\int_{B(ry,\epsilon r)} g(0,z,\omega) dz
\geq E^{b_\omega}_0\bigg[\exp\bigg\{& -\frac{1}{2} \int_0^t|b_\omega|^2(Z_s)ds - \int_0^t b_\omega(Z_s) d\beta_s\\
&- \int_0^t V_\omega(Z_s)ds\bigg\} \int_0^t 1_{B(ry,\epsilon r)}(Z_s)ds\bigg].
\end{align*}
\end{lemma}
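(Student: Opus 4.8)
\emph{Proof plan (not the proof).} The assertion is a one-line consequence of Lemma~\ref{lem:feynman_kac_appl} together with the Cameron--Martin--Girsanov change of measure recorded just above; the only thing to keep track of is measurability with respect to $\FF_t$.

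First, note that
\begin{align*}
F_t := \exp\left\{-\int_0^t V_\omega(Z_s)\,ds\right\}\int_0^t 1_{B(ry,\epsilon r)}(Z_s)\,ds
\end{align*}
is a nonnegative, $\FF_t$-measurable random variable, being a deterministic functional of the path $(Z_s)_{0\le s\le t}$. In this notation \eqref{eq:feynman_kac_appl} reads $\int_{B(ry,\epsilon r)} g(0,z,\omega)\,dz \ge E[F_t]$, where $E[F_t]$ is the expectation of $F_t$ under $P_0$.

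Next, since $b_\omega$ is bounded, $P_0$ and $P_0^{b_\omega}$ are mutually absolutely continuous on $\FF_t$, with $dP_0/dP_0^{b_\omega}$ restricted to $\FF_t$ equal to the density displayed immediately before the statement. As $F_t\ge 0$ is $\FF_t$-measurable, the change-of-variables formula for expectations yields
\begin{align*}
E[F_t]
= E_0^{b_\omega}\left[\exp\left\{-\int_0^t b_\omega(Z_s)\,d\beta_s-\frac{1}{2}\int_0^t|b_\omega|^2(Z_s)\,ds\right\}F_t\right].
\end{align*}
Substituting the definition of $F_t$ and collecting the exponential factors into one gives exactly the claimed inequality.

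There is essentially no obstacle here. The only subtlety worth a word is that the integrand against which the density is integrated --- in particular the It\^o integral $\int_0^t b_\omega(Z_s)\,d\beta_s$, which under $P_0^{b_\omega}$ is taken with respect to the Brownian motion $\beta$ --- has an $\FF_t$-measurable version, which is standard for bounded $b_\omega$. It is precisely the restriction to the finite horizon $t$ that makes the two path laws equivalent with the stated density, so no localisation or extra integrability argument is needed.
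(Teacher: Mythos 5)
Your argument is exactly the paper's: the paper derives the lemma directly from \eqref{eq:feynman_kac_appl} and the Cameron--Martin--Girsanov density $dP_0/dP_0^{b_\omega}$ on $\FF_t$, which is precisely the change-of-measure step you carry out. Nothing is missing; the remark about $\FF_t$-measurability and boundedness of $b_\omega$ covers the only point that needs care.
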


For $\omega \in \Omega$, $t>0$, for $\delta>0$ with $\epsilon > \delta$ introduce the event $\AAa_\omega(t,\epsilon,\delta):= A_{\omega,t}^1 \cap A_{\omega,t}^2 \cap A_{\omega,t}^3$ where
\begin{align*}
A_{\omega,t}^1 := \bigg\{ \bigg|\frac{1}{t} \int_0^t |b_\omega|^2(Z_s)ds - & \E[|b|^2 f]\bigg| < \delta \bigg\},\
A_{\omega,t}^2 := \bigg\{\bigg|\frac{1}{t} \int_0^t V_\omega(Z_s)ds - \E[V f]\bigg| < \delta \bigg\},\\
A_{\omega,t}^3 & := \bigg\{\bigg|\frac{Z_s}{s} - ay\bigg|<a\delta \, \forall s > t-(\epsilon-\delta)t\bigg\}.
\end{align*}
Due to the ergodic properties of the underlying dynamical system we have

\begin{lemma}\label{l:gammato1}
There exists $\GG(a,f,\phi) \in \FF$, $\P[\GG(a,f,\phi)]=1$, such that for $\omega \in \GG(a,f,\phi)$, for $0<\delta<\epsilon$,
\begin{align*}
P_0^{b_\omega}[\AAa_\omega(t,\epsilon,\delta)] \to 1 \text{ as } t \to \infty.
\end{align*}
\end{lemma}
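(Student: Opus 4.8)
The plan is to establish Lemma~\ref{l:gammato1} by showing that each of the three events $A_{\omega,t}^1$, $A_{\omega,t}^2$, $A_{\omega,t}^3$ has $P_0^{b_\omega}$-probability tending to $1$, on a single set $\GG(a,f,\phi)$ of full $\P$-measure, and then intersecting. The common mechanism is that under $P_0^{b_\omega}$ the canonical process $Z$ is a diffusion with generator $\frac12\Laplace + b_\omega\cdot\nabla$, and the associated ``environment seen from the particle'' process $\omega_t := \tau_{Z_t}\omega$ is a Markov process on $\Omega$ whose invariant, ergodic measure is $f\,d\P$. Indeed $b = \nabla f/(2f) + a\phi/f$ is, up to the factor $2$ in the Laplacian, exactly the drift that makes $f\,d\P$ reversible for the symmetric part $\nabla f/(2f)$ and invariant for the divergence-free perturbation $a\phi/f$ (here one uses $\nabla\cdot\phi = 0$ together with \eqref{eq:partialintegration}); the verification that $f\,d\P$ is invariant is a short integration-by-parts computation on $\Omega$. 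Ergodicity of $f\,d\P$ for this process follows from ergodicity of $\P$ under $\{\tau_x\}$ since $f$ is bounded above and below.

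With that in place, $A_{\omega,t}^1$ and $A_{\omega,t}^2$ are handled by an ergodic theorem for the environment process: $\frac1t\int_0^t |b_\omega|^2(Z_s)\,ds = \frac1t\int_0^t |b|^2(\omega_s)\,ds \to \E[|b|^2 f]$ and similarly $\frac1t\int_0^t V_\omega(Z_s)\,ds \to \E[Vf]$, for $f\,d\P$-a.e.\ starting environment, hence for $\P$-a.e.\ $\omega$ (again using $f\geq c_f>0$ to pass between the two measures). This is precisely the ``point of view of the particle'' ergodic theorem the introduction flags as \cite[(A.9)]{DelTenno2009SpecialExamples}; one must check its hypotheses (integrability of $|b|^2 f$ and $Vf$, which hold since $b$ is bounded and $V$ satisfies~(B)). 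Convergence in $L^1(f\,d\P)$ or a.s.\ of the time averages gives convergence in $P_0^{b_\omega}$-probability of the indicator of $A^1\cap A^2$ to $1$.

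For $A_{\omega,t}^3$ the key is a law of large numbers for $Z_t/t$: writing $Z_t = \beta_t + \int_0^t b_\omega(Z_s)\,ds$, the martingale term $\beta_t/t \to 0$ a.s.\ (it is a Brownian motion under $P_0^{b_\omega}$), while $\frac1t\int_0^t b_\omega(Z_s)\,ds = \frac1t\int_0^t b(\omega_s)\,ds \to \E[b f]$ by the same environment ergodic theorem. Now $\E[b f] = \E[\nabla f/2] + a\,\E[\phi] = 0 + a y = a y$, using \eqref{eq:partialintegration} for the first term and $\phi\in\Phi_y^\sups$ for the second. Thus $Z_t/t \to a y$ a.s., and one upgrades the convergence at the single time $t$ to the uniform-in-$s$ statement ``$|Z_s/s - ay| < a\delta$ for all $s > (1-\epsilon+\delta)t$'' by a standard argument: the a.s.\ limit $Z_s/s \to ay$ means that for a.e.\ path there is a (random) time after which the bound holds for \emph{all} larger $s$, so the probability that it holds for all $s > (1-\epsilon+\delta)t$ tends to $1$ as $t\to\infty$. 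Finally one takes $\GG(a,f,\phi)$ to be the (countable) intersection over a sequence $\delta \downarrow 0$ — or simply the full-measure set on which all the relevant environment ergodic limits hold — and intersects $P_0^{b_\omega}[A^1], P_0^{b_\omega}[A^2], P_0^{b_\omega}[A^3] \to 1$ to conclude $P_0^{b_\omega}[\AAa_\omega(t,\epsilon,\delta)] \to 1$.

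The main obstacle is the first step: correctly identifying $f\,d\P$ as the invariant and ergodic measure for the environment process associated with the drift $b$, and justifying the application of the general ergodic theorem \cite[(A.9)]{DelTenno2009SpecialExamples} in this non-compact stationary setting — in particular checking its measurability and integrability hypotheses and making sure the exceptional null set can be chosen uniformly in the (countably many) test functions $|b|^2$, $V$, $b$ and in $\delta$. The passage from $f\,d\P$-a.e.\ to $\P$-a.e.\ starting point is routine given $c_f \leq f$, and the upgrade from pointwise-in-$t$ to uniform-in-$s\geq(1-\epsilon+\delta)t$ convergence of $Z_s/s$ is a standard Egorov-type / a.s.-convergence argument; neither of these should cause real difficulty.
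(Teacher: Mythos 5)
Your proposal is correct and follows essentially the same line as the paper's proof: apply the ``point of view of the particle'' ergodic theorem of \cite[(A.9)]{DelTenno2009SpecialExamples} to get the $P_0^{b_\omega}$-a.s.\ (for $\P$-a.e.\ $\omega$) convergence of the time averages defining $A^1$ and $A^2$; decompose $Z_s/s$ into the Brownian term $\beta_s/s$ plus the two drift averages, use integration by parts \eqref{eq:partialintegration} to evaluate $\tfrac12\E[\nabla f]=0$ and $a\E[\phi]=ay$ for $A^3$; and pass from a.s.\ convergence of the indicator $1_{\AAa_\omega(t,\epsilon,\delta)}$ to convergence of $P_0^{b_\omega}[\AAa_\omega(t,\epsilon,\delta)]$ by dominated convergence. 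Your additional paragraph identifying $f\,d\P$ as the invariant (and ergodic) measure for the environment process with generator $\tfrac12\Laplace + b\cdot\nabla$ --- via reversibility for the symmetric drift $\nabla f/(2f)$ and invariance under the solenoidal perturbation $a\phi/f$ --- is exactly the content hidden in the cited ergodic theorem and is a correct unpacking of it; the paper simply delegates this to the reference. One small remark: the null set $\GG(a,f,\phi)^c$ depends only on the triple $(a,f,\phi)$ (through $b$ and the functions being averaged), not on $\delta$ or $\epsilon$ --- once $Z_s/s\to ay$ and the two time averages converge a.s., the indicators $1_{A^i_{\omega,t}}\to 1$ for every $0<\delta<\epsilon$ on that same full-measure event --- so your fallback ``simply the full-measure set on which all the relevant environment ergodic limits hold'' is the right formulation; no further intersection over $\delta$ is needed.
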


\begin{proof}
The Ergodic Theorem given in \cite[(A.9)]{DelTenno2009SpecialExamples} shows that for $i=1,2$ $\P$-a.s.\ $P_0^{b_\omega}$-a.s.\ $\lim_{t \to \infty}1_{A_{\omega,t}^{i}} = 1$. For the examination of $A_{\omega,t}^3$ recognise that by the definition of $b$ and since $Z_s = \beta_s + \int_0^sb_\omega(Z_u)du$,
\begin{align*}
\frac{Z_s}{s} 
& = \frac{\beta_s}{s} + \frac{1}{s}\int_0^s \frac{\nabla f_\omega}{2f_\omega}(Z_u)du + \frac{1}{s}\int_0^s\frac{a\phi_\omega}{f_\omega}(Z_u)du.
\end{align*}
$(\beta_t)_t$ is a Brownian motion under $P_0^{b_\omega}$, thus, the first term in the last expression converges to zero $P_0^{b_\omega}$-a.s.. By the Ergodic Theorem and integration by parts \eqref{eq:partialintegration} $\P$-a.s.\ $P_0^{b_\omega}$-a.s.\
\begin{align*}
&\lim_{s \to \infty} \frac{1}{s}\int_0^s \frac{\nabla f_\omega}{2f_\omega}(Z_u)du = \frac{1}{2}\E[\nabla f]=0,
&\lim_{s \to \infty} \frac{1}{s}\int_0^s \frac{a\phi_\omega}{f_\omega}(Z_u)du = a\E[\phi] = ay.
\end{align*}
It follows $\P$-a.s.\ $P_0^{b_\omega}$-a.s.\ $\lim_{t \to \infty} 1_{A_{\omega,t}^3} = 1$.

By dominated convergence we get $\P$-a.e.\ $P_0^{b_\omega}[A_{\omega,t}^i] \to 1$, $i=1,2,3$. Here the $\P$-a.s.\ convergences guaranteed by the Ergodic Theorem depend on the choice of $b$ and on the functions over which the space and time averages are taken.
\end{proof}

We continue by choosing $t:=r/a$. On $\AAa_\omega(r/a,\epsilon,\delta)$ one has for $(1-(\epsilon-\delta))r/a <s<r/a$ that $|Z_s - ry| 
\leq |Z_s-asy| + |a(\frac{r}{a}-s)y| 
< \epsilon r$. Hence, on $\AAa_\omega(r/a,\epsilon,\delta)$,
\begin{align*}
\int_0^{r/a} 1_{B(ry,\epsilon r)}(Z_s)ds
\geq \int_{(1-(\epsilon-\delta))r/a}^{r/a} 1_{B(ry,\epsilon r)}(Z_s)ds
\geq (\epsilon-\delta)\frac{r}{a}.
\end{align*}
We deduce with Lemma \ref{lem:cameronmartin} (choose $t = r/a$)
\begin{align}\label{eq:zwischenergebnis}
\int_{B(ry,\epsilon r)} g(0,z,\omega) dz
&\geq\exp\bigg\{-\frac{r}{a}\bigg(2\delta+\frac{1}{2}\E[|b|^2 f]+\E[Vf]\bigg)\bigg\}\\\nonumber
&\qquad\cdot(\epsilon-\delta)\frac{r}{a} E_0^{b_\omega}\bigg[\exp\bigg\{-\int_0^{r/a} b_\omega(Z_s)d\beta_s\bigg\},\AAa_\omega(r/a,\epsilon,\delta)\bigg].
\end{align}

We estimate the latter:

\begin{lemma}\label{eq:zwischenergebnis2}
Let $\gamma_\omega := P_0^{b_\omega}[\AAa_\omega(r/a,\epsilon,\delta)]$. For all $\omega \in \Omega$,
\begin{align*}
&E_0^{b_\omega}\bigg[\exp\bigg\{-\int_0^{r/a} b_\omega(Z_s) d\beta_s\bigg\},\AAa_\omega(r/a,\epsilon,\delta)\bigg]
\geq \gamma_\omega \exp\{-\gamma_\omega^{-1/2} \|b\|_\infty (r/a)^{1/2}\}.
\end{align*}
\end{lemma}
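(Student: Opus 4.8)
The quantity to bound is the expectation of the exponential martingale-type functional $\exp\{-\int_0^{r/a} b_\omega(Z_s)\,d\beta_s\}$ restricted to the event $\AAa_\omega(r/a,\epsilon,\delta)$, which has $P_0^{b_\omega}$-probability $\gamma_\omega$. The natural tool is Jensen's inequality applied to the \emph{conditional} law $P_0^{b_\omega}[\,\cdot \mid \AAa_\omega(r/a,\epsilon,\delta)]$, a probability measure of total mass one. Writing the left-hand side as $\gamma_\omega \, E_0^{b_\omega}\!\left[\exp\{-\int_0^{r/a} b_\omega(Z_s)\,d\beta_s\} \,\middle|\, \AAa_\omega(r/a,\epsilon,\delta)\right]$ and using convexity of $x \mapsto e^{-x}$, the conditional expectation is at least $\exp\{-E_0^{b_\omega}[\int_0^{r/a} b_\omega(Z_s)\,d\beta_s \mid \AAa_\omega(r/a,\epsilon,\delta)]\}$, so it remains to bound the conditional expectation of the stochastic integral $I := \int_0^{r/a} b_\omega(Z_s)\,d\beta_s$ from above by $\gamma_\omega^{-1/2}\|b\|_\infty (r/a)^{1/2}$.

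To estimate $E_0^{b_\omega}[I \mid \AAa_\omega] = \gamma_\omega^{-1} E_0^{b_\omega}[I\,\mathbf{1}_{\AAa_\omega}]$, I would use Cauchy--Schwarz: $E_0^{b_\omega}[I\,\mathbf{1}_{\AAa_\omega}] \le E_0^{b_\omega}[I^2]^{1/2}\, E_0^{b_\omega}[\mathbf{1}_{\AAa_\omega}]^{1/2} = E_0^{b_\omega}[I^2]^{1/2}\,\gamma_\omega^{1/2}$. Under $P_0^{b_\omega}$ the process $(\beta_t)_t$ is a Brownian motion (Cameron--Martin--Girsanov, as already invoked before the statement), so $I$ is a genuine It\^o integral against that Brownian motion, and the It\^o isometry gives $E_0^{b_\omega}[I^2] = E_0^{b_\omega}[\int_0^{r/a} |b_\omega|^2(Z_s)\,ds] \le \|b\|_\infty^2 \cdot (r/a)$, since $b$ is bounded on $\Omega$ (hence $b_\omega$ is bounded on $\R^d$ uniformly, with $\sup_\omega \|b_\omega\|_\infty \le \|b\|_\infty$). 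Combining, $E_0^{b_\omega}[I\,\mathbf{1}_{\AAa_\omega}] \le \|b\|_\infty (r/a)^{1/2}\,\gamma_\omega^{1/2}$, whence $E_0^{b_\omega}[I \mid \AAa_\omega] \le \gamma_\omega^{-1/2}\|b\|_\infty (r/a)^{1/2}$, and plugging into the Jensen bound yields exactly the claimed inequality.

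**Main obstacle.** The only subtlety is the integrability needed to justify the It\^o isometry and the Jensen step: one must know that $I$ is square-integrable under $P_0^{b_\omega}$, which follows here because $b_\omega$ is \emph{bounded} (so $\int_0^{r/a}|b_\omega|^2(Z_s)\,ds \le \|b\|_\infty^2 (r/a) < \infty$ deterministically), making $(\int_0^t b_\omega(Z_s)\,d\beta_s)_t$ a true $L^2$-martingale rather than merely a local martingale. Boundedness of $b$ was established just before the statement (from boundedness of $\phi$, $\nabla f$ and $f \ge c_f$), so this causes no difficulty. Everything else is a routine chain of Jensen, Cauchy--Schwarz and It\^o's isometry; I would keep the write-up to those three lines.
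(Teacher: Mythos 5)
Your proof is correct and follows essentially the same three-step chain as the paper: Jensen's inequality (conditioning on $\AAa_\omega$), Cauchy--Schwarz (the paper calls it H\"older) to relate $E_0^{b_\omega}[\,\cdot\,,\AAa_\omega]$ to $\gamma_\omega^{1/2}E_0^{b_\omega}[Y_{r/a}^2]^{1/2}$, and It\^o's isometry with boundedness of $b$. The only microscopic difference is that the paper inserts an absolute value $|Y_{r/a}|$ before applying Cauchy--Schwarz, whereas you bound $E_0^{b_\omega}[Y_{r/a}\mid\AAa_\omega]$ directly without the absolute value, which is if anything slightly cleaner; both are equally valid.
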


\begin{proof}
Introduce for $t \geq 0$ the process $Y_t:=\int_0^t b_\omega(Z_s) d\beta_s$. Jensen's inequality gives
\begin{align}\label{eq:jensen}
&E_0^{b_\omega}\bigg[\exp\bigg\{-\int_0^{r/a} b_\omega(Z_s) d\beta_s\bigg\},\AAa_\omega\bigg]
\geq \gamma_\omega \exp\{-\gamma_\omega^{-1}E_0^{b_\omega}[|Y_{r/a}|,\AAa_\omega]\}.
\end{align}
By Hölder's inequality and It\^o isometry,
\begin{align}\label{eq:hoelder}
E_0^{b_\omega}[|Y_{r/a}|,\AAa_\omega]
\leq \gamma_\omega^{1/2} E_0^{b_\omega}[(Y_{r/a})^2]^{1/2}
= \gamma_\omega^{1/2} E_0^{b_\omega}\bigg[\int_0^{r/a}|b_\omega|^2(Z_s)ds\bigg]^{1/2}.
\end{align}
Since $b_\omega$ is bounded we have $E_0^{b_\omega}[\int_0^{r/a}|b_\omega|^2(Z_s)ds] \leq (r/a) \|b\|_\infty^2$. This together with \eqref{eq:jensen} and \eqref{eq:hoelder} shows the statement.
\end{proof}

The choice of $\epsilon>\delta>0$ was arbitrary, hence, Lemma \ref{l:gammato1}, Estimate \eqref{eq:zwischenergebnis} and Lemma \ref{eq:zwischenergebnis2} imply on $\GG(a,f,\phi)$,
\begin{align}\label{eq:finalprof}
\limsup_{r \to \infty} - \frac{1}{r}\ln \int_{B(ry,\epsilon r)} g(0,z,\omega) dz
&\leq a^{-1}(2^{-1}\E[b^2f] + \E[Vf]).
\end{align}
The definition of $b$ shows
\begin{align*}
\E[b^2f] = \E\left[\frac{|\nabla f|^2}{4f}\right] + 2a\left\langle \frac{\nabla f}{2f} , \phi \right\rangle + a^2 \E\left[\frac{\phi^2}{f}\right].
\end{align*}
Using integration by parts \eqref{eq:partialintegration} the middle term equals
$a \langle (\nabla f)/f , \phi \rangle 
= a \langle \nabla\ln f , \phi \rangle 
= - a \langle \ln f ,\nabla \phi \rangle
= 0$
since $\phi \in \Phi_y$.

Denote the right-hand side of \eqref{eq:finalprof} by $R(a,f,\phi)$. We want to minimise over $a>0$, $f \in \F$ and $\phi \in \Phi_y$. The exceptional sets $\GG(a,f,\phi)^\complemetc$ on which \eqref{eq:finalprof} does not hold necessarily depend on $a$, $f$ and $\phi$. In order to be sure that these do not add up to a nontrivial set, note that there exist `minimising sequences' $(f_n)_n\subset\F$, $(a_{n})_{n}\subset(0,\infty)$ and $(\phi_{n})_{n}\subset\Phi_y$ such that
\[
\inf_{f \in \F}\inf_{a>0}\inf_{\phi \in \Phi_y}R(a,f,\phi) = \inf_{n \in \N}R(a_n,f_n,\phi_n).
\]
The considered families are countable, thus the union of the exceptional sets $\GG(a_n,f_n,\phi_n)^\complemetc$ has measure zero. In the remaining part of this subsection $\P$-a.s.\ expressions refer to $\bigcap_n \GG(a_n,f_n,\phi_n)$.

We get from \eqref{eq:finalprof} $\P$-a.s.\ that 
$\limsup_{r \to \infty} - (1/r)\ln\int_{B(ry,\epsilon r)} g(0,z,\omega) dz$
is less than or equal to
\begin{align*}
\inf_{f \in \F} \inf_{a > 0} \left[ \frac{1}{a} \left( \int \frac{|\nabla f|^2}{8f} + Vfd\P \right) + a \left( \inf_{\phi \in \Phi_y} \int \frac{\phi^2}{2f} d\P \right)\right].
\end{align*}
Set $v:=\E [|\nabla f|^2/(8f) + Vf]$ and $w:=\inf_{\phi \in \Phi_y} \E [\phi^2/(2f)]$. $v \geq \E[V] \min_\Omega f>0$ and $2w \geq \E[\phi^2]/\max_\Omega f\geq |y|^2/\max_\Omega f > 0$ by Jensen's inequality. The infimum of $a \mapsto v/a + aw$ for positive $a$ is therefore achieved at $a_{\mathrm{min}}=\sqrt{v/w}$ with minimum $2\sqrt{vw}$. Thus, $\P$-a.s.,
\begin{align*}
\limsup_{r \to \infty} - \frac{1}{r}\ln \int_{B(ry,\epsilon r)} g(0,z,\omega) dz
\leq \inf_{f \in \F} 2 \left[ \left( \int \frac{|\nabla f|^2}{8f} + Vf d\P \right) \left(\inf_{\phi \in \Phi_y} \int \frac{\phi^2}{2f} d\P \right) \right]^{1/2}.
\end{align*}
Therefore, for any $y \in \R^d \setminus\{0\}$ $\P$-a.s.\ the upper bound holds which shows the statement of Proposition \ref{p:moregeneralupperbound}.

\subsection{Lower Bound}\label{subsec:lowerbound}

In this subsection we are going to show that for potentials $V$ subject to conditions (G) and (E1) $\P$-a.s.\ for any $y \in \R^d \setminus \{0\}$,
\begin{align}\label{eq:upperbound}
\liminf_{r \to \infty} - \frac{1}{r}\ln g(0,ry,\omega) 
\geq \RR(\sigma)(y).
\end{align}

Note that (E1) contains some kind of ergodicity condition on $\P$, and we do not need to require explicitly $(\Omega,\FF,\P,\tau)$ to be an ergodic dynamical system . We start with

\begin{proposition}\label{prop:alpha_geq_R_Lambda}
Let $V$ be a potential satisfying (G). Then for $\omega \in \Omega$, for $y \in \R^d\setminus\{0\}$,
\begin{align*}
\liminf_{r \to \infty} - \frac{1}{r}\ln g(0,ry,\omega) 
\geq \RR(-\Lambda_\omega)(y).
\end{align*}
\end{proposition}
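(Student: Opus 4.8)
The plan is to establish the pointwise (in $\omega$) lower bound $\liminf_{r\to\infty} -\tfrac1r \ln g(0,ry,\omega) \geq \RR(-\Lambda_\omega)(y)$ by producing, for each admissible drift $\lambda$ in the definition of $\RR(-\Lambda_\omega)(y)$, an upper bound on $g(0,ry,\omega)$ that decays at rate $\langle y,\lambda\rangle$. Recall $\RR(-\Lambda_\omega)(y) = \sup\{\langle y,\lambda\rangle : \lambda\in\R^d,\ \lambda^2/2 < -\Lambda_\omega(\lambda)\}$, so it suffices to show: for every $\lambda$ with $\lambda^2/2 < -\Lambda_\omega(\lambda)$ one has $\liminf_{r\to\infty} -\tfrac1r\ln g(0,ry,\omega) \geq \langle y,\lambda\rangle$, and then take the supremum over such $\lambda$.

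First I would use condition (G): under (G) the function $u_\omega := g(0,\cdot,\omega)$ is a $C^2$ solution of $L_\omega u_\omega = (\tfrac12\Delta - V_\omega)u_\omega = 0$ on $\R^d\setminus\{0\}$, and $u_\omega \geq 0$. For a fixed $\lambda$, introduce the tilted function $h_\omega(x) := e^{\langle \lambda, x\rangle} u_\omega(x)$; a direct computation shows that $h_\omega$ satisfies $\tfrac12\Delta h_\omega - \langle\lambda,\nabla h_\omega\rangle + (\tfrac{\lambda^2}{2} - V_\omega)h_\omega = 0$ on $\R^d\setminus\{0\}$, i.e. $h_\omega$ is $L^{-\lambda}$-harmonic-with-killing at rate $V_\omega - \lambda^2/2$. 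The idea is then to run the martingale/Feynman--Kac argument of Lemma \ref{lem:g_geq} type in reverse: using the exit-time martingale $e^{-\int_0^{t\wedge\tau}(V_\omega - \lambda^2/2)(Z_s)ds} h_\omega(Z^{-\lambda}_{t\wedge\tau})$ under $P_0^{-\lambda}$ (Brownian motion with drift $-\lambda$), evaluated on exit from a ball of radius $R$ around the origin, to obtain a representation of $h_\omega(0) = u_\omega(0)$ in terms of the occupation of the sphere $|x|=R$ weighted by the killing. Letting $R\to\infty$ and using that $\int_0^\infty e^{-\int_0^t(V_\omega-\lambda^2/2)(Z_s)ds}\,dt$ has finite $E_0^{-\lambda}$-expectation precisely because $\Lambda_\omega(\lambda) < \lambda^2/2$ (this is exactly the condition $\lambda^2/2 < -\Lambda_\omega(\lambda)$ controlling the quenched exponential growth of the Feynman--Kac functional with drift $-\lambda$), I get $u_\omega(0) < \infty$ and, more importantly, a comparison bound $u_\omega(ry) \leq C_\omega e^{-\langle\lambda, ry\rangle}$ for all $r$, hence $-\tfrac1r\ln g(0,ry,\omega) \geq \langle y,\lambda\rangle - \tfrac1r\ln C_\omega \to \langle y,\lambda\rangle$.

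More carefully, the cleanest route is probably: fix $\lambda$ with $\lambda^2/2 < -\Lambda_\omega(\lambda)$, pick $\epsilon>0$ with $\lambda^2/2 < -\Lambda_\omega(\lambda) - \epsilon$, so that $t\mapsto E_0^{-\lambda}[\exp\{-\int_0^t(V_\omega(Z_s) - \lambda^2/2)ds\}]$ is eventually bounded by $e^{-\epsilon t/2}$; deduce that $e^{\langle\lambda,\cdot\rangle}g(0,\cdot,\omega)$ is a bounded $L^{-\lambda}$-invariant-type function, and then invoke a Harnack/maximum-principle argument (or directly the martingale stopped at the exit time of $B(0,|ry|+1)\setminus\overline{B(0,1)}$, pushing the outer boundary to infinity where $h_\omega$ stays bounded) to conclude $g(0,ry,\omega) \leq \mathrm{const}_\omega\, e^{-\langle\lambda,ry\rangle}$. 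Taking $\liminf_{r\to\infty}(-\tfrac1r\ln\cdot)$ gives $\geq\langle y,\lambda\rangle$, and the supremum over admissible $\lambda$ gives $\RR(-\Lambda_\omega)(y)$.

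I expect the main obstacle to be making the ``push the boundary to infinity'' step rigorous: one needs uniform control on $h_\omega(x) = e^{\langle\lambda,x\rangle}g(0,x,\omega)$ as $|x|\to\infty$ to kill the boundary contribution of the stopped martingale, and this is exactly where the hypothesis $\lambda^2/2 < -\Lambda_\omega(\lambda)$ must be converted into a genuine a priori bound on the tilted Green function via the Feynman--Kac representation of $g$ with drift. A secondary subtlety is that $\RR(-\Lambda_\omega)(y) = -\infty$ when no admissible $\lambda$ exists (the set in the sup is empty), in which case the inequality is vacuous; and when it is finite one must check the supremum is not lost in the limit — handled by treating each $\lambda$ separately before optimising, as above. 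Everything here is for fixed $\omega$, consistent with the ``for $\omega\in\Omega$'' quantifier in the statement, so no ergodicity is invoked at this stage.
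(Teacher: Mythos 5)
Your overall skeleton is right: reduce to a single admissible $\lambda$ with $\lambda^2/2 < -\Lambda_\omega(\lambda)$, show $\liminf_r -\frac1r\ln g(0,ry,\omega)\geq\langle y,\lambda\rangle$, then take the supremum, observing that the empty case gives $-\infty$ and the statement is vacuous; and you correctly note the argument is purely pointwise in $\omega$. But the route you propose from there is not the paper's, and it has a genuine gap that you yourself identify without resolving.

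The paper sidesteps the exit-time martingale entirely. Its key step is to compute the weighted integral $\int_{\R^d}e^{\lambda z}g(0,z,\omega)\,dz$ by Fubini and a Cameron--Martin--Girsanov change of drift: this equals $\int_0^\infty e^{t\lambda^2/2}E_0^\lambda\bigl[\exp\{-\int_0^t V_\omega(Z_s)\,ds\}\bigr]\,dt$, which is finite \emph{precisely} when $\lambda^2/2 < -\Lambda_\omega(\lambda)$. Then, because (G) gives $\Delta g(0,\cdot,\omega)=2V_\omega g(0,\cdot,\omega)\geq 0$, the function $g(0,\cdot,\omega)$ is subharmonic off the origin, and the sub-mean-value inequality $g(0,ry,\omega)\leq c(d)\int_{B(ry,1)}g(0,z,\omega)\,dz$ converts the integral bound into the pointwise one: $\int_{B(ry,1)}g\,dz\leq\bigl(\sup_{B(ry,1)}e^{-\lambda z}\bigr)\int e^{\lambda z}g\,dz\leq c\,e^{|\lambda|}e^{-r\langle\lambda,y\rangle}$. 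Nothing needs to be ``pushed to infinity''.

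Your route, by contrast, tilts $g$ to $h=e^{\langle\lambda,\cdot\rangle}g(0,\cdot,\omega)$, identifies the PDE it solves, and then wants to use a Feynman--Kac martingale stopped on an expanding domain, letting the outer radius go to infinity. This requires an a priori bound on $h$ near infinity to kill the outer boundary term — but such a bound on $h$ is exactly the conclusion being sought, so the argument as written is circular; you flag this as ``the main obstacle'' but leave it unresolved. Resolving it through criticality/subcriticality theory for the drift-$(-\lambda)$ operator with killing $V_\omega-\lambda^2/2$ is possible but is heavier machinery than the paper's two-line integral computation. There are also sign slips: the condition you quote, ``$\Lambda_\omega(\lambda)<\lambda^2/2$'', is automatic (since $\Lambda_\omega\leq 0$) and is not the admissibility condition $\lambda^2/2<-\Lambda_\omega(\lambda)$; and the drift in the Girsanov step that matches the condition on $\Lambda_\omega(\lambda)$ (not $\Lambda_\omega(-\lambda)$) is $+\lambda$, not $-\lambda$ — with drift $-\lambda$ you would be controlling $\int e^{-\lambda z}g\,dz$, which is governed by $\Lambda_\omega(-\lambda)$ and yields a bound in the wrong direction. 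You also write $h_\omega(0)=u_\omega(0)$, but $g(0,\cdot,\omega)$ is singular at the origin for $d\geq 2$, so this starting point for the martingale is not available. The fix that makes your plan work is essentially to replace the martingale-pushing step by the two ingredients the paper actually uses: finiteness of $\int e^{\lambda z}g\,dz$ (Fubini plus Girsanov with drift $+\lambda$) and the sub-mean-value inequality for the subharmonic function $g(0,\cdot,\omega)$.
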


\begin{proof}
Since $V_\omega \geq 0$ as well as $g(0,\cdot,\omega) \geq 0$ we have $V_\omega g(0,\cdot,\omega)\geq 0$ on $\R^d\setminus\{0\}$. Thus (G) gives $\Laplace g(0,\cdot,\omega) \geq 0$ on $\R^d\setminus\{0\}$ which implies that $g(0,\cdot,\omega)$ is subharmonic on $\R^d\setminus\{0\}$, see \cite[Paragraph 1.II.8]{Doob1984}. Hence, for all $x \in \R^d$ such that $|x|>1$,
\begin{align*}
g(0,x,\omega) \leq c(d) \int_{B(x,1)} g(0,z,\omega) dz,
\end{align*}
where $c(d)$ is a constant depending only on $d$. Consequently, it suffices to get the lower bound for
\begin{align*}
\liminf_{r \to \infty} - \frac{1}{r} \ln \int_{B(ry,1)} g(0,z,\omega) dz.
\end{align*}

Information on the exponential decay of $r \mapsto g(0,ry,\omega)$ can be obtained by determining those $\lambda \in \R^d$ for which $\int_{\R^d} e^{\lambda z} g(0,z,\omega)dz$ is finite. We calculate
\begin{align}\label{e:2:first}
\begin{split}
\int_{\R^d} e^{\lambda z} g(0,z,\omega) dz
&= \int_0^\infty  E_0\bigg[\exp\bigg\{\lambda Z_t-\int_0^t V_\omega (Z_s) ds\bigg\} \bigg] dt.
\end{split}
\end{align}
Considering Brownian motion with constant drift $\lambda$ the latter equals
\begin{align*}
\int_0^\infty  e^{t\lambda^2/2}E_0^\lambda\bigg[ \exp\bigg\{-\int_0^t V_\omega (Z_s) ds\bigg\} \bigg] dt.
\end{align*}
Exponential decay of $E_0^\lambda[e^{-\int_0^t V_\omega (Z_s) ds} ]$ is the quenched free energy and we get for those $\lambda$ which satisfy $\lambda^2/2 < -\Lambda_\omega(\lambda)$,
\begin{align*}
\int_0^\infty  e^{t\lambda^2/2}E_0^\lambda\bigg[ \exp\bigg\{-\int_0^t V_\omega (Z_s) ds \bigg\} \bigg] dt
\leq c < \infty
\end{align*}
for some $c=c(\omega,\lambda)>0$.
With \eqref{e:2:first} we conclude for any $r \geq 0$,
\begin{align*}
\int_{B(ry,1)}e^{\lambda z}g(0,z,\omega)dz \leq \int_{\R^d}e^{\lambda z}g(0,z,\omega)dz \leq c.
\end{align*}
Let $\bar z$ be the point in $B(ry,1)$ for which $e^{\lambda z}$ is minimal on $B(ry,1)$. Since $|\lambda(\bar z - ry)| \leq |\lambda|$,
\begin{align*}
\int_{B(ry,1)}g(0,z,\omega)dz 
= e^{- \lambda \bar z} \int_{B(ry,1)}e^{\lambda \bar z}  g(0,z,\omega)dz 
\leq c e^{- \lambda \bar z}
\leq c e^{|\lambda|} e^{- r \lambda y}.
\end{align*}
Thus, for $\lambda$ with $\lambda^2/2<-\Lambda_\omega(\lambda)$ we have $\liminf_{r \to \infty} - \frac{1}{r}\ln g(0,ry,\omega) 
\geq \langle y, \lambda \rangle$.
\end{proof}

Proposition \ref{prop:alpha_geq_R_Lambda} together with the following result implies \eqref{eq:upperbound}.
\begin{lemma}
Let $V$ be a potential satisfying (E1), then $\P$-a.s.\ for any $y \in \R^d$,
\begin{align}\label{eq:R_Lambda_geq_R_sigma}
\RR(-\Lambda_\omega)(y) \geq \RR(\sigma)(y).
\end{align}
\end{lemma}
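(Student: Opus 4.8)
The plan is to reduce the supremum defining $\RR(\sigma)(y)$ to a supremum over a fixed countable set of drifts, so that the $\lambda$-dependent exceptional set coming from (E1) causes no trouble. Write $S := \{\lambda \in \R^d : |\lambda|^2/2 < \sigma(\lambda)\}$, so that $\RR(\sigma)(y) = \sup_{\lambda \in S}\langle y,\lambda\rangle$ (with $\sup\emptyset := -\infty$). Since $S$ is a subset of the separable space $\R^d$, I would fix a countable set $S_0 = \{\lambda_k : k \in \N\} \subseteq S$ that is dense in $S$. Because each map $\lambda \mapsto \langle y,\lambda\rangle$ is continuous, approximating an arbitrary $\lambda \in S$ by a sequence in $S_0$ gives
\[
\sup_{k \in \N}\langle y,\lambda_k\rangle = \sup_{\lambda \in S}\langle y,\lambda\rangle = \RR(\sigma)(y) \qquad \text{for every } y \in \R^d,
\]
and this identity remains valid if the common value is $-\infty$ (the case $S = \emptyset$) or $+\infty$.

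Next I would apply (E1) along the countable family $S_0$: for each $k$, since $|\lambda_k|^2/2 < \sigma(\lambda_k)$, hypothesis (E1) furnishes a $\P$-null set $N_k$ with $\sigma(\lambda_k) \leq -\Lambda_\omega(\lambda_k)$ for all $\omega \notin N_k$. Setting $N := \bigcup_{k \in \N} N_k$ we have $\P[N] = 0$, and for every $\omega \notin N$ and every $k$ the chain $|\lambda_k|^2/2 < \sigma(\lambda_k) \leq -\Lambda_\omega(\lambda_k)$ shows that $\lambda_k$ belongs to the set $\{\lambda : |\lambda|^2/2 < -\Lambda_\omega(\lambda)\}$ over which the supremum defining $\RR(-\Lambda_\omega)(y)$ runs. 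Hence $\langle y,\lambda_k\rangle \leq \RR(-\Lambda_\omega)(y)$ for all $y \in \R^d$, and taking the supremum over $k$ together with the first step yields, for $\omega \notin N$ and all $y \in \R^d$,
\[
\RR(\sigma)(y) = \sup_{k \in \N}\langle y,\lambda_k\rangle \leq \RR(-\Lambda_\omega)(y),
\]
which is \eqref{eq:R_Lambda_geq_R_sigma}.

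The only genuine obstacle here is purely measure-theoretic: the null set produced by (E1) depends on $\lambda$, so one cannot directly conclude $\sigma(\lambda) \leq -\Lambda_\omega(\lambda)$ simultaneously for the uncountably many $\lambda$ entering $\RR(\sigma)(y)$. Passing to the countable dense subset $S_0$ resolves this, since a countable union of null sets is null and the linear functionals $\langle y,\cdot\rangle$ are continuous; no regularity of $\sigma$ or of $\Lambda_\omega$ beyond the stated hypotheses is needed.
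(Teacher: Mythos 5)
Your argument is correct and coincides with the paper's proof: both pass to a countable dense subset of $\{\lambda : |\lambda|^2/2 < \sigma(\lambda)\}$, use continuity of $\lambda \mapsto \langle y,\lambda\rangle$ to recover $\RR(\sigma)(y)$ from this subset, and take the union of the countably many exceptional sets from (E1) to get a single $\P$-null set independent of $y$. No differences in substance.
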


\begin{proof}
Choose a dense countable subset $\{\lambda_n: \, n \in \N\}$ of the set $\{\lambda \in \R^d: \, \lambda^2/2 < \sigma(\lambda)\}$. Since the scalar product is continuous, one has $\sup\{ \langle y,  \lambda_n \rangle: \, \lambda_n^2/2<\sigma(\lambda_n), \, n\in\N\}= \RR(\sigma)$. Let $E_n \in \FF$ denote the exceptional set for $\lambda_n$ in condition (E1). \eqref{eq:R_Lambda_geq_R_sigma} is valid on $(\bigcup_nE_n)^\complemetc$ which is a set of probability one. Moreover, $\bigcup_nE_n$ does not depend on $y$.
\end{proof}

\subsection{Equality of the Upper and Lower Estimates}\label{subsec:equivalence}

The proof of Theorem \ref{t:varform} is finished as soon as we establish equality between upper and lower estimate, that is we have to show under sufficient conditions
\begin{align}\label{eq:toshow_for_equality}
\RR(\sigma) = \Gamma_V.
\end{align}

We introduce the functional $I$ appearing often in the context of large deviations of Markov processes as defined in \cite[(1.12)]{Donsker75b}: For $f \in \F_\subw$ and $\lambda \in \R^d$ set
\begin{align*}
I(f) := - \inf_{u \in \U} \int \frac{L^\lambda u}{u} fd\P.
\end{align*}
The following characterisation of the functional $I$ is the same as the one given in \cite[Lemma 3.3]{Donsker1976PrincipalEigenvalue} on $\R^d$ and is proven analogously:

\begin{proposition}\label{prop:formulaforI}
Let $f \in \F_\subw^2$ and $\lambda \in \R^d$. For any $\D \in \DDD$,
\begin{align}\label{eq:formulaforI}
I(f)
=  \E \left[ \frac{|\nabla f|^2}{8f}\right] + \frac{\lambda^2}{2} - \frac{1}{2} \inf_{w \in \D} \E[ |\lambda - \nabla w|^2f ].
\end{align}
In particular, for potentials $V$,
\begin{align}\label{eq:formula_for_sigma}
\sigma(\lambda) 
& = \inf_{f \in \F_\subw^2} \left\{ \E\left[ Vf +  \frac{|\nabla f|^2}{8f} \right] + \frac{|\lambda|^2}{2} - \frac{1}{2} \inf_{w \in \D} \E[ |\lambda - \nabla w |^2f ]\right\}.
\end{align}
\end{proposition}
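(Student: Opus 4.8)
The plan is to establish \eqref{eq:formulaforI} first, since \eqref{eq:formula_for_sigma} follows immediately: by definition $\sigma(\lambda) = -\sup_{f \in \F_\subw^2} \inf_{u \in \U}\int((L^\lambda u)/u - V)fd\P = \inf_{f \in \F_\subw^2}\{\E[Vf] + I(f)\}$, so substituting \eqref{eq:formulaforI} and noting that $\D_\subs \in \DDD$ (Lemma \ref{lem:spacesdense}) gives a valid choice of $\D$, one reads off the claimed identity. So the real work is the variational identity for $I$.

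For \eqref{eq:formulaforI} I would work with $u = e^w$ for $w \in \D_\subw^2$, using the description $\U = e^{\D_\subw^2}$. A direct computation gives $L^\lambda u/u = L^\lambda e^w/e^w = \tfrac12\Laplace w + \tfrac12|\nabla w|^2 + \lambda\cdot\nabla w$. Hence
\begin{align*}
\int \frac{L^\lambda u}{u} fd\P = \E\left[\left(\tfrac12\Laplace w + \tfrac12|\nabla w|^2 + \lambda\cdot\nabla w\right)f\right].
\end{align*}
The term $\E[(\Laplace w)f]$ should be integrated by parts using \eqref{eq:partialintegration} (applied componentwise, which is legitimate since $f \in \F_\subw^2$ and $w \in \D_\subw^2$ guarantee the relevant products lie in the right domains): $\E[(\Laplace w)f] = -\E[\nabla w \cdot \nabla f] = -\E[(\nabla w \cdot \nabla f/f)f]$. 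Collecting terms, $-\int (L^\lambda u/u)fd\P = \E[\tfrac12(\nabla w\cdot\nabla f/f) - \tfrac12|\nabla w|^2 - \lambda\cdot\nabla w)f]$. Now complete the square in $\nabla w$: writing $\nabla w = (\nabla f)/(2f) - v$ shifts the quadratic form so that the expression becomes $\E[\tfrac{|\nabla f|^2}{8f}] + \E[(\lambda\cdot v - \tfrac12|v|^2)f] - \tfrac12\E[\lambda\cdot(\nabla f/f) f]$; the last term vanishes by \eqref{eq:partialintegration} since $\E[\lambda\cdot\nabla f] = 0$. Then $\E[(\lambda\cdot v - \tfrac12|v|^2)f] = \tfrac{\lambda^2}{2} - \tfrac12\E[|\lambda - v|^2 f]$. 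Taking the infimum over $u \in \U$, equivalently over $w$, equivalently (after the substitution) over $v$ ranging through gradients $\{\nabla w : w\}$, yields \eqref{eq:formulaforI} with the infimum over $w \in \D$ for $\D = \D_\subw^2$ or $\D_\subs$.

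The main obstacle is justifying that the infimum over $\{\nabla w : w \in \U\text{-preimages}\}$ can be replaced by the infimum over $\{\nabla w : w \in \D\}$ for an \emph{arbitrary} $\D \in \DDD$, not just the specific dense class coming naturally from $\U$. This is where the density sets $\DDD$ and the $\|\cdot\|_\nabla$-approximation matter: given any $w \in \D_\subw$ one approximates it by $w_n \in \D$ with $\|w_n - w\|_\nabla \to 0$, and since $f \in \F_\subw^2$ is bounded, $\E[|\lambda - \nabla w_n|^2 f] \to \E[|\lambda - \nabla w|^2 f]$ (the bounded factor $f$ makes the $L^2$-convergence of $\nabla w_n$ upgrade to convergence of the quadratic expression, as in the argument already used in the proof of Proposition \ref{prop:different_spaces}). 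One must also check the reverse: that restricting $u$ to $e^{\D_\subw^2}$ rather than all of $\U$ loses nothing — but by the stated identity $\U = e^{\D_\subw^2}$ this is automatic. A secondary technical point is verifying the integrability needed to apply integration by parts and to interchange limits — all controlled by the uniform bounds $\|f\|_\infty, \|\nabla f\|_\infty, \|\Laplace f\|_\infty < \infty$ built into $\F_\subw^2$ together with boundedness of $\nabla w$ for $w \in \D_\subw^2$. I would cite \cite[Lemma 3.3]{Donsker1976PrincipalEigenvalue} for the structure of the argument and indicate that the only modification is systematic use of \eqref{eq:partialintegration} and the density classes $\DDD$ in place of their Euclidean analogues.
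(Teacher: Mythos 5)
Your proof is correct and takes essentially the same route as the paper's: the paper's substitution $h := (1/2)\ln f - w$, together with the observation that $w \mapsto h$ is a bijection of $\D_\subw^2$ (which needs $(1/2)\ln f \in \D_\subw^2$, automatic for $f \in \F_\subw^2$), is exactly your completion of the square with $\nabla w = \nabla f/(2f) - v$. The density step for passing from $\D_\subw^2$ to an arbitrary $\D \in \DDD$ also agrees with the paper, which defers that point to Lemma \ref{lem:propertiesofHandsigma}.
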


\begin{proof}
The right-hand side of \eqref{eq:formulaforI} is independent of the choice of $\D \in \DDD$ as outlined in Lemma \ref{lem:propertiesofHandsigma} below. Without restriction assume $\D= \D_\subw^2$. We start calculating
\begin{align*}
I(f)
& = - \inf_{w \in \D} \int \frac{L^\lambda e^w}{e^w} f d \P
= - \inf_{w \in \D} \int \left(L^\lambda w + \frac{1}{2} |\nabla w|^2 \right)f d\P.
\end{align*}
For $w \in \D$ define $h := (1/2) \ln f -w$. Hence, $w = (1/2)\ln f - h$ and a straightforward calculation shows
\begin{align*}
L^\lambda w + \frac{1}{2} |\nabla w|^2 
& = \frac{\Laplace f}{4f} - \frac{|\nabla f|^2}{8 f^2} + \frac{\lambda \nabla f}{2f} 
- \frac{\Laplace h}{2} + \frac{|\nabla h|^2}{2} -\lambda \nabla h
- \frac{\nabla f \nabla h}{2f}.
\end{align*}
$h$ is in $\D$, moreover, the mapping $w \mapsto h$ is bijective from $\D$ to $\D$. Therefore,
\begin{align*}
I(f)
= & - \int \left(\frac{\Laplace f}{4f} - \frac{|\nabla f|^2}{8 f^2} + \frac{\lambda \nabla f}{2f}\right)f d\P \\
& \quad - \inf_{w \in \D} \int \left(- \frac{\Laplace w}{2} + \frac{|\nabla w|^2}{2} -\lambda \nabla w - \frac{\nabla f \nabla w}{2f} \right)f d\P.
\end{align*}
Integration by parts \eqref{eq:partialintegration} implies
$- \E [ (\Laplace w) f ] = \E[ \nabla w \nabla f]$,
$\E [\Laplace f] = 0$, and
$\E[ \lambda \nabla f] = 0$. Thus,
\begin{align*}
I(f) = \int \frac{|\nabla f|^2}{8f}d\P - \inf_{w \in \D} \int \left(\frac{|\nabla w|^2}{2} - \lambda \nabla w \right)f d\P
\end{align*}
which shows the statement.
\end{proof}

For $f \in L^1$ bounded such that $f\geq c$ for some $c>0$ and $\E f = 1$ we introduce an inner product on $(L^2)^d$: Let for $\phi$, $\psi \in (L^2)^d$,
\begin{align}\label{eq:def_scalarproduct_wrt_f}
\langle \phi,\psi \rangle_f := \E[\phi \psi f]
\end{align}
and set $\|\psi\|_f:= \E[\psi^2 f]^{1/2}$.
We define for $b \in (L^2)^d$, $f \in \F_\subw$, $\D \in \DDD$,
\begin{align}\label{e:DefH}
H(b,f):= \inf_{w \in \D} \E[|b - \nabla w|^2 f],
\end{align}
and
\begin{align} \label{eq:defK}
K(f) &:= \E\left[ \frac{|\nabla f|^2}{8f} +  Vf\right].
\end{align}

We collect some properties of $H$ and $\sigma$:

\begin{lemma}\label{lem:propertiesofHandsigma}
The definition of $H$ is independent of the choice of $\D \in \DDD$.
$H(\cdot,f)^{1/2}$ is a seminorm on $((L^2)^d,\|\cdot\|_f)$ factorised by the vector subspace consisting of all $\nabla w$ with $w \in \D_\subs$. In particular, for $b_1$, $b_2 \in (L^2)^d$,
\begin{align}\label{e:triangle}
H(b_1+b_2,f)^{1/2} \leq H(b_1,f)^{1/2} + H(b_2,f)^{1/2}.
\end{align}
$H(\cdot,f)$ is continuous with respect to $\|\cdot\|_2$. Let $\eta \in S^{d-1}$, then
\begin{align}\label{e:bounds_on_HK}
0< \ess \inf_\P f & \leq H(\eta,f) \leq 1 \text{ and } H(\eta, 1)=1.
\end{align}
Let $V$ be a potential. For any $\lambda \in \R^d$,
\begin{align}\label{eq:sigma_invariant_under_reflections}
\sigma(\lambda) = \sigma(-\lambda).
\end{align}
The mapping from $\R^d \to \R$,
\begin{align}\label{eq:M_concave}
\lambda \mapsto \sigma(\lambda)-\lambda^2/2
= \inf_{f \in \F_\subw^2} \bigg\{ K(f) - \frac{1}{2} H(\lambda,f)\bigg\}
\end{align}
is concave. Moreover, for $\eta \in S^{d-1}$, the mapping from $[0,\infty) \to \R$,
\begin{align}\label{eq:M_decreasing}
s \mapsto \sigma(s \eta)-s^2/2
\end{align}
is monotone decreasing. In particular,
\begin{align}\label{eq:Maximum_of_M}
\sup\{\sigma(\lambda)-\lambda^2/2: \ \lambda \in \R^d\} = \sigma(0).
\end{align}
One has
\begin{align}\label{eq:sigma_independent_ofchoiceof_F}
\sigma(0) = \inf_{f \in \F_\subw^2} K(f) \text{, and if $V \in L^2$, for all $\F \in \FFF$ we get } \sigma(0) = \inf_{f \in \F} K(f).
\end{align}
\end{lemma}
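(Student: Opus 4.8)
\textbf{Proof plan for Lemma \ref{lem:propertiesofHandsigma}.}

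The plan is to treat the assertions in the order they are stated, since each later claim leans on the earlier ones. First I would establish independence of $H(b,f)$ from the choice of $\D \in \DDD$: given $\D_1, \D_2 \in \DDD$, any $w \in \D_1 \subset \D_\subw$ is approximated by a sequence $w_n \in \D_2$ with $\|w_n - w\|_\nabla \to 0$; since $f$ is bounded, $\E[|b-\nabla w_n|^2 f] \to \E[|b-\nabla w|^2 f]$, so the two infima coincide. The seminorm claim is then immediate from the representation $H(b,f)^{1/2} = \inf_{w \in \D} \|b - \nabla w\|_f = \mathrm{dist}_{\|\cdot\|_f}(b, \{\nabla w : w \in \D_\subs\})$: distance to a linear subspace in a seminormed space is a seminorm, and the triangle inequality \eqref{e:triangle} follows from $\|\cdot\|_f$ subadditivity together with the fact that $\nabla w_1 + \nabla w_2 = \nabla(w_1+w_2)$ still ranges over the subspace. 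Continuity of $H(\cdot,f)$ in $\|\cdot\|_2$ follows since $|H(b_1,f)^{1/2} - H(b_2,f)^{1/2}| \leq H(b_1-b_2,f)^{1/2} \leq \|b_1-b_2\|_f \leq \|f\|_\infty^{1/2}\|b_1-b_2\|_2$. For the bounds \eqref{e:bounds_on_HK}, the upper bound $H(\eta,f)\leq 1$ comes from taking $w \equiv 0$ (which lies in $\D_\subs$) so that $H(\eta,f) \leq \E[|\eta|^2 f] = \E f = 1$; the lower bound $H(\eta,f) \geq \ess\inf_\P f$ follows from $\E[|\eta-\nabla w|^2 f] \geq (\ess\inf f)\,\E[|\eta - \nabla w|^2]$ and then $\E[|\eta - \nabla w|^2] = |\eta|^2 - 2\E[\eta\cdot\nabla w] + \E[|\nabla w|^2] \geq 1$ since $\E[\partial_i w] = 0$ by \eqref{eq:partialintegration}; the identity $H(\eta,1)=1$ follows from the same computation with $f \equiv 1$, the infimum being attained (in the limit) at $w = 0$.

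Next, \eqref{eq:sigma_invariant_under_reflections} follows from \eqref{eq:formula_for_sigma}: replacing $\lambda$ by $-\lambda$ and $w$ by $-w$ (a bijection of $\D$) leaves the expression unchanged, since $|{-\lambda} - \nabla(-w)|^2 = |\lambda - \nabla w|^2$. For \eqref{eq:M_concave}, I rewrite using \eqref{eq:formula_for_sigma} and \eqref{e:DefH}: $\sigma(\lambda) - \lambda^2/2 = \inf_{f \in \F_\subw^2}\{K(f) - \tfrac12 H(\lambda,f)\}$, so it suffices to show $\lambda \mapsto K(f) - \tfrac12 H(\lambda,f)$ is concave for each fixed $f$ (an infimum of concave functions is concave). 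This amounts to convexity of $\lambda \mapsto H(\lambda,f)$, which holds because $\lambda \mapsto \|\lambda - \nabla w\|_f^2$ is convex (quadratic with nonnegative Hessian) for each $w$, and the infimum over $w$ of a family of convex functions need \emph{not} be convex in general --- so here I instead use that $H(\lambda,f) = \mathrm{dist}_{\|\cdot\|_f}(\lambda, \mathcal{L})^2$ where $\mathcal{L} = \{\nabla w\}$ is a linear subspace, and the squared distance to a linear (hence convex) subspace in a seminormed space is a convex function of the point. For \eqref{eq:M_decreasing}: fixing $\eta \in S^{d-1}$ and using \eqref{e:bounds_on_HK} one has $H(s\eta,f) = s^2 H(\eta,f) \leq s^2$; writing $\sigma(s\eta) - s^2/2 = \inf_f\{K(f) - \tfrac{s^2}{2}H(\eta,f)\}$ and noting $H(\eta,f) \geq 0$, the map $s \mapsto K(f) - \tfrac{s^2}{2}H(\eta,f)$ is nonincreasing on $[0,\infty)$ for each $f$, hence so is the infimum. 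Then \eqref{eq:Maximum_of_M} is immediate: by concavity and reflection invariance \eqref{eq:sigma_invariant_under_reflections} the supremum over $\R^d$ of $\sigma(\lambda)-\lambda^2/2$ is attained at $\lambda = 0$ (equivalently, apply \eqref{eq:M_decreasing} along every ray from the origin), and $\sigma(0) - 0 = \sigma(0)$.

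Finally, for \eqref{eq:sigma_independent_ofchoiceof_F}: at $\lambda = 0$, formula \eqref{eq:formula_for_sigma} gives $\sigma(0) = \inf_{f \in \F_\subw^2}\{K(f) - \tfrac12 H(0,f)\}$, and $H(0,f) = \inf_{w\in\D}\E[|\nabla w|^2 f] = 0$ by taking $w \equiv 0$; hence $\sigma(0) = \inf_{f \in \F_\subw^2} K(f)$. The second equality, valid when $V \in L^2$, follows by the approximation argument already used in the proof of Proposition \ref{prop:different_spaces}: given $\F \in \FFF$ and $f \in \F_\subw$ (in particular $f \in \F_\subw^2$ up to the appropriate mollification, or one argues directly on $\F_\subw$ since $K$ extends) there are $f_n \in \F$ with $\|f_n - f\|_\nabla \to 0$ and $\inf_n f_n > c$, and then $\E[|\nabla f_n|^2/(8f_n)] \to \E[|\nabla f|^2/(8f)]$ and $\E[Vf_n] \to \E[Vf]$ (using $V \in L^2$ and Cauchy--Schwarz), so $K(f_n) \to K(f)$; combined with $\F \subset \F_\subw$ this gives $\inf_{\F} K = \inf_{\F_\subw} K = \inf_{\F_\subw^2} K = \sigma(0)$, where the middle equality uses density of $\F_\subw^2$ in $\F_\subw$ (cf.\ Lemma \ref{lem:spacesdense} and the fact that $\F_\subw^2 \in \FFF$). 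The main obstacle is the convexity step in \eqref{eq:M_concave}: one must be careful that $H(\lambda,f)$ is a genuine squared-distance-to-a-subspace (so that convexity is automatic) rather than merely a pointwise infimum of convex functions, and that the relevant subspace $\{\nabla w : w \in \D_\subs\}$ is the same across the different $\D \in \DDD$ as already recorded in the independence statement; everything else is a routine application of \eqref{eq:partialintegration}, boundedness of the densities, and the density properties from Lemma \ref{lem:spacesdense}.
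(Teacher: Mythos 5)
Your proposal is correct and follows essentially the same route as the paper's proof: deducing independence of $\D$ via $\|\cdot\|_\nabla$-approximation and boundedness of $f$, identifying $H(\cdot,f)^{1/2}$ as a distance to the subspace $\{\nabla w:w\in\D_\subs\}$ to get the seminorm and convexity properties, the computation of \eqref{e:bounds_on_HK} via $w\equiv 0$ and orthogonality of constants against gradients, and the homogeneity $H(s\eta,f)=s^2H(\eta,f)$ for monotonicity. The only cosmetic differences are that you derive the lower bound $\E[|\eta-\nabla w|^2]\geq 1$ directly rather than passing through $H(\eta,1)$ first, and you phrase continuity via the reverse triangle inequality, both of which match the paper's underlying logic.
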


\begin{proof}
Independence of the choice of $\D \in \DDD$: Fix $f \in \F_\subw$ and $b \in (L^2)^d$. For $w \in \D_\subw$ let $w_n$ be a sequence in $\D$ converging to $w$ in $\|\cdot\|_{\nabla}$. Then $\nabla w_n \to \nabla w$ in $L^2$. By boundedness of $f$, also $\nabla w_n \to \nabla w$ in $\|\cdot\|_f$. This shows $\E[|b - \nabla w_n|^2 f] \to \E[ |b - \nabla w|^2 f]$.

One has $H(b,f)^{1/2}=\inf_{w \in \D_\subs} \| b-\nabla w\|_f$. Hence, $H(\cdot,f)^{1/2}$ is a seminorm on $((L^2)^d,\|\cdot\|_f)$ factorised by the vector space consisting of all $\nabla w$ with $w \in \D_\subs$, see e.g.\ \cite[22.13.b]{Schechter1997}. $H(\cdot,f)$ is continuous with respect to $\|\cdot\|_2$, since
\begin{align}\label{eq:Hcontinuous}
H(b,f)^{1/2} \leq \|f\|_\infty^{1/2} H(b,1)^{1/2} \leq \|f\|_\infty^{1/2} \|b\|_2.
\end{align}

For \eqref{e:bounds_on_HK} choose $w\equiv 0$ which shows $\E[|\eta- \nabla w|^2f] = \int |\eta|^2 f d\P = 1$, thus $H(\eta,f) \leq 1$. On the other hand, integration by parts gives $\langle \eta, \nabla w \rangle=0$. Hence, $\eta, \nabla w$ are orthogonal to each other in $L^2$ which leads to
\begin{align*}
H(\eta, 1) = \inf_{w \in \D} \E[ |\eta - \nabla w|^2] = \inf_{w \in \D}[1 + \|\nabla w \|_2^2] = 1.
\end{align*}
Therefore, $H(\eta, f) \geq (\ess\inf_\P f) H(\eta,1) = \ess\inf_\P f>0$.

Invariance of $\sigma$ under reflections follows from the fact, that $w \mapsto -w$ is a bijective mapping on $\D_\subs$.

The fact that $H(\cdot,f)^{1/2}$ is a seminorm and convexity of $x \mapsto x^2$ show that $\lambda \mapsto K(f) - (1/2) H(\lambda,f)$ is concave. The infimum over concave functions is again concave which shows concavity of \eqref{eq:M_concave}.

One has for $\lambda \in \R^d$,
\begin{align}\label{eq:H(lambda)=lambda^2H}
H(\lambda,f)=|\lambda|^2 H(\eta,f),
\end{align}
where $\eta\in S^{d-1}$ is in direction of $\lambda$. This follows for $\lambda =0$ by choosing $w \equiv 0$ in the definition of $H(0,f)$. For $\lambda \neq 0$ use the fact, that the mapping $w \mapsto |\lambda|w$ is bijective on $\D$. This shows \eqref{eq:M_decreasing}.

\eqref{eq:Maximum_of_M} follows from \eqref{eq:sigma_invariant_under_reflections} and \eqref{eq:M_decreasing}.

Formula \eqref{eq:sigma_independent_ofchoiceof_F} is a consequence of the representation \eqref{eq:formula_for_sigma}. Independence of the choice of $\F \in \FFF$ follows as in Proposition \ref{prop:different_spaces}.
\end{proof}

The following is a consequence of the representation for $\sigma$ obtained in \eqref{eq:formula_for_sigma}:

\begin{proposition}\label{prop:first_equality}
Let $V$ be a potential. If $\sigma(0) > 0$, then for any $y \in \R^d$,
\begin{align*}
\RR(\sigma)(y)
=\sup_{\eta \in S^{d-1}} \inf_{f \in \F_\subw^2} 
\left[ 2 K(f) \frac{\langle y,\eta\rangle^2}{H(\eta, f)} \right]^{1/2}.
\end{align*}
Let $y \neq 0$. Then $\RR(\sigma)(y) > 0$ if and only if $\sigma(0) > 0$. $\RR(\sigma)\equiv -\infty$ if and only if $\sigma(0)=0$.
\end{proposition}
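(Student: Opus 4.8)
The plan is to compute $\RR(\sigma)(y)$ directly from its definition $\RR(\sigma)(y)=\sup\{\langle y,\lambda\rangle:\lambda^2/2<\sigma(\lambda)\}$ by splitting $\lambda$ into its length $s=|\lambda|\geq 0$ and direction $\eta\in S^{d-1}$, so that $\langle y,\lambda\rangle=s\langle y,\eta\rangle$. First I would substitute the representation \eqref{eq:formula_for_sigma}, or more conveniently \eqref{eq:M_concave} together with \eqref{eq:H(lambda)=lambda^2H}, to rewrite the constraint $\sigma(\lambda)-\lambda^2/2>0$, i.e.\ $\inf_{f\in\F_\subw^2}\{K(f)-\tfrac12 H(\lambda,f)\}>0$, as: for \emph{every} $f\in\F_\subw^2$ one has $K(f)>\tfrac12 s^2 H(\eta,f)$, equivalently $s^2<\inf_{f\in\F_\subw^2}\tfrac{2K(f)}{H(\eta,f)}$ — here I use that $H(\eta,f)>0$ by \eqref{e:bounds_on_HK}, and I need to be a little careful about whether the infimum over $f$ of $K(f)-\tfrac12 s^2 H(\eta,f)$ being positive is the same as each term being positive; since $K$ and $H$ are nonnegative and the infimum is over a fixed set, $\inf_f(K(f)-\tfrac12 s^2H(\eta,f))>0$ iff $s^2<\inf_f 2K(f)/H(\eta,f)$, provided the latter infimum is attained or approached so that strictness transfers, which is exactly where monotonicity in $s$ (i.e.\ \eqref{eq:M_decreasing}) makes the set of admissible $s$ an interval $[0,s_\eta)$ with $s_\eta^2=\inf_f 2K(f)/H(\eta,f)$.

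Granting that, for a fixed direction $\eta$ the contribution to the supremum is $\sup_{0\le s<s_\eta}s\langle y,\eta\rangle$. If $\langle y,\eta\rangle\le 0$ this is $0$ (attained at $s=0$), and if $\langle y,\eta\rangle>0$ it equals $s_\eta\langle y,\eta\rangle=\big[s_\eta^2\langle y,\eta\rangle^2\big]^{1/2}=\big[\inf_f 2K(f)\langle y,\eta\rangle^2/H(\eta,f)\big]^{1/2}$. Taking the supremum over all $\eta\in S^{d-1}$, and noting that directions with $\langle y,\eta\rangle\le 0$ only contribute $0$ while for $y\ne 0$ there is some $\eta$ with $\langle y,\eta\rangle>0$ (unless all the admissible sets are empty), yields
\begin{align*}
\RR(\sigma)(y)=\sup_{\eta\in S^{d-1}}\inf_{f\in\F_\subw^2}\left[2K(f)\frac{\langle y,\eta\rangle^2}{H(\eta,f)}\right]^{1/2},
\end{align*}
where for $\eta$ with $\langle y,\eta\rangle\le 0$ the bracket is $\le 0$ but contributes nothing beyond $0$, consistent with the stated formula (one reads the supremum of nonnegative-meaningful quantities). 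For the equivalences: if $\sigma(0)>0$ then by \eqref{eq:sigma_independent_ofchoiceof_F} $\inf_f K(f)=\sigma(0)>0$, so $s_\eta^2=\inf_f 2K(f)/H(\eta,f)\ge 2\sigma(0)/1>0$ using $H(\eta,f)\le 1$, hence for any $y\ne 0$ picking $\eta$ in the direction of $y$ gives $\RR(\sigma)(y)\ge (2\sigma(0))^{1/2}|y|>0$; conversely if $\sigma(0)=0$ then for each $\eta$ and each $\varepsilon>0$ there is $f$ with $K(f)<\varepsilon$, so $\inf_f 2K(f)/H(\eta,f)\le \inf_f 2K(f)/(\ess\inf_\P f)$ — this does not immediately vanish because $\ess\inf_\P f$ also shrinks, so instead I would argue directly at the level of the constraint set: $\sigma(0)=0$ and monotonicity \eqref{eq:M_decreasing} force $\sigma(s\eta)-s^2/2\le\sigma(0)=0$ for all $s>0$, so the strict inequality $\lambda^2/2<\sigma(\lambda)$ fails for every $\lambda\ne 0$ and also at $\lambda=0$; thus the admissible set is empty and $\RR(\sigma)(y)=\sup\emptyset=-\infty$. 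The same computation shows $\RR(\sigma)(y)>0$ for $y\ne 0$ forces some admissible $\lambda\ne 0$, which by \eqref{eq:Maximum_of_M} forces $\sigma(0)>0$.

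The main obstacle I anticipate is the careful handling of the strict inequalities when passing from "$\inf_f$ of a difference is positive" to "$s^2$ is strictly below an infimum over $f$", and the interplay with whether the relevant infimum over $f$ is attained; the clean way around this is to lean on the monotonicity statement \eqref{eq:M_decreasing} and concavity \eqref{eq:M_concave} from Lemma~\ref{lem:propertiesofHandsigma}, which guarantee that for each $\eta$ the set $\{s\ge 0:\sigma(s\eta)>s^2/2\}$ is an interval of the form $[0,s_\eta)$ (or $\{0\}$, or empty only at $\sigma(0)=0$), so that $\sup_{s}s\langle y,\eta\rangle$ is computed as a boundary value $s_\eta\langle y,\eta\rangle^+$ without needing attainment. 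A secondary point to get right is the bookkeeping over directions $\eta$ with $\langle y,\eta\rangle\le 0$, which must be seen to contribute only $0$ and hence not affect the supremum as long as some direction is genuinely positive; this is where the hypothesis $y\ne 0$ enters for the last two claims, while the displayed formula itself holds for all $y$ (with the convention that a nonpositive bracket is read as contributing at most $0$, matching $\RR(\sigma)(0)=0$).
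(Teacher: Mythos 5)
Your proof follows essentially the same route as the paper's: decompose $\lambda=s\eta$ into magnitude and direction, rewrite the admissibility constraint $\lambda^2/2<\sigma(\lambda)$ via \eqref{eq:formula_for_sigma} and \eqref{eq:H(lambda)=lambda^2H}, invoke the concavity and monotone decrease of $s\mapsto\sigma(s\eta)-s^2/2$ from Lemma~\ref{lem:propertiesofHandsigma} to identify the admissible set of magnitudes as an interval $[0,s_\eta)$ whose endpoint satisfies $s_\eta^2=\inf_f 2K(f)/H(\eta,f)$, and then take the supremum over $\eta$. The strictness transfer, the directions with $\langle y,\eta\rangle\le 0$, and the $\sigma(0)=0$ degeneracy that you flag as potential obstacles are handled in the paper by exactly the tools you invoke, so the argument is sound and matches the published one.
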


\begin{proof}
Let $\F:=\F_\subw^2$. For $\eta \in S^{d-1}$ denote by $S_\eta$ the set of $\lambda \in \R^d$ such that $|\lambda|^2/2 < \sigma (\lambda)$ and $\lambda$ parallel to $\eta$.
By \eqref{eq:sigma_invariant_under_reflections} $S_\eta=-S_{-\eta}$. We can rewrite
\begin{align}\label{eq:R=supsup}
\RR(\sigma)(y)
=\sup_{\eta \in S^{d-1}}\sup_{\lambda \in S_\eta} \langle y, \eta \rangle  |\lambda|
=\sup_{\eta \in S^{d-1}}\langle y, \eta \rangle \sup_{\lambda \in S_\eta} |\lambda|,
\end{align}
where the second equality is valid if there exists $\eta$ such that $S_\eta \neq \emptyset$. This is the case if $\sigma(0)>0$. The reverse is also true:

Let $\eta \in S^{d-1}$. By \eqref{eq:formula_for_sigma}, \eqref{eq:H(lambda)=lambda^2H} for $\lambda$ parallel to $\eta$ one has
$|\lambda|^2/2<\sigma(\lambda)$ if and only if
\begin{align*}
0 < \inf_{f \in \F} \bigg\{K(f) - \frac{|\lambda|^2}{2}H(\eta,f)\bigg\}=:M(|\lambda|).
\end{align*}
As outlined in \eqref{eq:M_concave} and \eqref{eq:M_decreasing} the latter is concave decreasing in $|\lambda|$. Hence,
$S_\eta \neq \emptyset$ if and only if $M(0)=\inf_{f \in \F}K(f)=\sigma(0)>0$.

Assume $\inf_{f \in \F}K(f)>0$. Since $M$ is concave and continuous, $\bar S_\eta$ equals the set of $\lambda$ parallel to $\eta$ for which
\begin{align*}
0 \leq \inf_{f \in \F} \bigg\{K(f) - \frac{|\lambda|^2}{2}H(\eta,f)\bigg\}.
\end{align*}
This is true if and only if for any $f \in \F$,
\begin{align*}
0 \leq K(f) - \frac{|\lambda|^2}{2}H(\eta,f).
\end{align*}
As $H(\eta,f)>0$, see \eqref{e:bounds_on_HK}, the latter is equivalent to
$|\lambda| \leq \left( 2K(f)/H(\eta,f) \right)^{1/2}$.
We get in case of $\sigma(0)>0$
\begin{align*}
\sup_{\lambda \in S_\eta} |\lambda|
=\sup_{\lambda \in \bar S_\eta} |\lambda|
=\inf_{f \in \F} \left( 2K(f)/H(\eta,f) \right)^{1/2}.
\end{align*}
This together with \eqref{eq:R=supsup} shows the statement.
\end{proof}

On the other hand we have		

\begin{proposition}\label{t:alternativerep1}
Let $V$ be a potential. For $y \in \R^d$, for $\D \in \DDD$,
\begin{align*}
\Gamma_V(y)
& = \inf_{f \in \F_\subs} \sup_{\eta \in S^{d-1}}
\left[ 2 K(f) \frac{\langle y,\eta\rangle^2}{H(\eta, f)} \right]^{1/2}.
\end{align*}
If additionally $V \in L^2$, then the set $\F_\subs$ can be replaced by any $\F \in \FFF$.
\end{proposition}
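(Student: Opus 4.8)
The plan is to start from \eqref{eq:definition_varform}. Writing $K(f)=\int |\nabla f|^2/(8f)+Vf\,d\P$ as in \eqref{eq:defK} and $\int|\phi|^2/(2f)\,d\P=\tfrac12\E[|\phi|^2/f]$, the definition of $\Gamma_V$ reads $\Gamma_V(y)=\inf_{f\in\F_\subs}\bigl[2K(f)\,m(f)\bigr]^{1/2}$, where $m(f):=\inf_{\phi\in\Phi_y^\sups}\E[|\phi|^2/f]$. Since $K(f)\geq\E[V]\,\ess\inf_\P f>0$ and $t\mapsto(2K(f)t)^{1/2}$ is increasing and continuous, so that $\sup_\eta(2K(f)h(\eta))^{1/2}=(2K(f)\sup_\eta h(\eta))^{1/2}$, the whole statement reduces to the single pointwise identity
\[
m(f)=\sup_{\eta\in S^{d-1}}\frac{\langle y,\eta\rangle^2}{H(\eta,f)}
\]
for every $f$ in the relevant density class. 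First I would trade $\Phi_y^\sups$ for the weak space $\Phi_y^\supw$: either by Proposition \ref{prop:different_spaces} applied with $\Phi_y=\Phi_y^\supw\in\PPPhi_y$, or directly, noting that $\Phi_y^\sups$ is $\|\cdot\|_2$-dense in the $\|\cdot\|_2$-closed affine set $\Phi_y^\supw$ (Lemma \ref{lem:spacesdense}) while $\phi\mapsto\E[|\phi|^2/f]$ is $\|\cdot\|_2$-continuous on bounded sets because $1/f$ is bounded. It then suffices to prove the displayed identity with $m(f)=\inf_{\phi\in\Phi_y^\supw}\E[|\phi|^2/f]$, for an arbitrary $f\in\F_\subw$.

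The inequality ``$\geq$'' is the easy half and comes from Cauchy--Schwarz: for $\phi\in\Phi_y^\supw$, $\eta\in S^{d-1}$ and $w\in\D_\subs$ one has $\E[\phi\cdot\nabla w]=0$, so $\langle y,\eta\rangle=\E[\phi]\cdot\eta=\E[\phi\cdot(\eta-\nabla w)]\leq\E[|\phi|^2/f]^{1/2}\,\E[f|\eta-\nabla w|^2]^{1/2}$; taking the infimum over $w$, this and the same inequality with $\eta$ replaced by $-\eta$ give $\langle y,\eta\rangle^2\leq\E[|\phi|^2/f]\,H(\eta,f)$, whence (using $H(\eta,f)>0$ from \eqref{e:bounds_on_HK}) $\langle y,\eta\rangle^2/H(\eta,f)\leq\E[|\phi|^2/f]$; now take the supremum over $\eta$ and the infimum over $\phi$.

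The reverse inequality ``$\leq$'' is the substantive part, and the hard point will be to exhibit a divergence-free field that is genuinely optimal and not merely admissible. For this I would work in the Hilbert space $\bigl((L^2)^d,\langle\cdot,\cdot\rangle_f\bigr)$ and use the orthogonal projection $P_f$ onto its closed subspace of potential fields $L^2_\pot:=\overline{\{\nabla w:w\in\D_\subs\}}$ (which is $\|\cdot\|_f$-closed since $\|\cdot\|_f$ and $\|\cdot\|_2$ are equivalent). For $\zeta\in\R^d$ set $g_\zeta:=\zeta-P_f\zeta$, so $g_\zeta$ is $\langle\cdot,\cdot\rangle_f$-orthogonal to $L^2_\pot$ and $\|g_\zeta\|_f^2=H(\zeta,f)=|\zeta|^2H(\zeta/|\zeta|,f)$, using the seminorm description of $H$ in Lemma \ref{lem:propertiesofHandsigma} and \eqref{eq:H(lambda)=lambda^2H}. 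Linearity of $\zeta\mapsto g_\zeta$ makes $B(\zeta,\zeta'):=\langle g_\zeta,g_{\zeta'}\rangle_f$ a symmetric bilinear form on $\R^d$, which is positive definite because $B(\zeta,\zeta)=H(\zeta,f)\geq|\zeta|^2\ess\inf_\P f>0$ by \eqref{e:bounds_on_HK}; testing $\E[fg_\zeta]$ against constant vector fields and using $g_\zeta\perp_f L^2_\pot$ gives $\E[fg_\zeta]=B\zeta$. I would then put $\xi:=B^{-1}y$ and $\phi^\ast:=f\,g_\xi\in(L^2)^d$ ($f$ being bounded); then $\E[\phi^\ast\cdot\nabla w]=\langle g_\xi,\nabla w\rangle_f=0$ for all $w$ and $\E[\phi^\ast]=B\xi=y$, so $\phi^\ast\in\Phi_y^\supw$, while $\E[|\phi^\ast|^2/f]=\|g_\xi\|_f^2=B(\xi,\xi)=\langle\xi,y\rangle=\langle B^{-1}y,y\rangle$. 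Finally, invoking the elementary identity $\sup_{\eta\in S^{d-1}}\langle y,\eta\rangle^2/\langle\eta,B\eta\rangle=\langle B^{-1}y,y\rangle$ (Cauchy--Schwarz in the $B$-inner product, with equality at $\eta\parallel B^{-1}y$) together with $\langle\eta,B\eta\rangle=H(\eta,f)$ for $\eta\in S^{d-1}$ yields $m(f)\leq\langle B^{-1}y,y\rangle=\sup_\eta\langle y,\eta\rangle^2/H(\eta,f)$. The degenerate case $y=0$ is immediate, since then $\phi\equiv0\in\Phi_0^\supw$ gives $m(f)=0$ and the supremum is $0$.

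Combining the two inequalities proves the pointwise identity, and inserting it into $\Gamma_V(y)=\inf_{f\in\F_\subs}[2K(f)m(f)]^{1/2}$ gives the stated formula. For the last sentence, when $V\in L^2$ Proposition \ref{prop:different_spaces} lets one replace $\F_\subs$ by any $\F\in\FFF$ in $\Gamma_V$; since the pointwise identity was proved for every $f\in\F_\subw$, hence in particular for $f\in\F\subset\F_\subw$, the same substitution turns the formula into $\Gamma_V(y)=\inf_{f\in\F}\sup_{\eta\in S^{d-1}}[2K(f)\langle y,\eta\rangle^2/H(\eta,f)]^{1/2}$. Thus the only genuinely delicate ingredient is the reverse inequality above, whose content is that the optimal solenoidal field is the $f$-weighted Weyl projection of the appropriate constant field and that minimizing the associated finite-dimensional quadratic form reproduces exactly the supremum over directions $\eta$.
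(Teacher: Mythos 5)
Your proof is correct, and it rests on the same underlying geometric fact as the paper's Lemma~\ref{lem:projectionlemma} --- the orthogonal decomposition of $(L^2)^d$ under the weighted inner product $\langle\cdot,\cdot\rangle_f$ into the (closure of) potential fields and their $\perp_f$-complement, which is the $f$-weighted Weyl decomposition. The reduction to the pointwise identity $m(f)=\sup_\eta \langle y,\eta\rangle^2/H(\eta,f)$ is exactly the content of Lemma~\ref{lem:projectionlemma}, and the passage from $\Phi_y^\sups$ to $\Phi_y^\supw$ via Proposition~\ref{prop:different_spaces} (or directly via $\|\cdot\|_2$-density plus boundedness of $1/f$) matches the paper.

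Where you genuinely diverge is in \emph{how} the identity is proved. The paper does it in one stroke: it writes $m(f)$ as the squared distance $\|y/f - K\|_f^2$ from $y/f$ to the closed subspace $K$, invokes the abstract Hilbert-space duality $\|w-K\|_f=\sup_{v\in L,\,\|v\|_f=1}\langle w,v\rangle_f$ with $L=\{\eta-\nabla w\}=K^{\perp_f}$, and then expands. You instead prove the two inequalities separately: the easy bound by Cauchy--Schwarz in $L^2(f\P)$, and the hard bound by \emph{constructing} the optimal solenoidal field $\phi^\ast = f\,g_{B^{-1}y}$, where $g_\zeta$ is the $\perp_f$-projection of the constant field $\zeta$ and $B$ is the Gram matrix $B(\zeta,\zeta')=\langle g_\zeta,g_{\zeta'}\rangle_f$, then closing the loop with the finite-dimensional identity $\sup_\eta \langle y,\eta\rangle^2/\langle\eta,B\eta\rangle = \langle B^{-1}y,y\rangle$. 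Your route is more constructive --- it actually exhibits the minimizer, and it reveals that the whole variational problem over $\Phi_y^\supw$ collapses to a $d\times d$ positive-definite quadratic form. That insight is hidden in the paper's argument. The trade-off is length: the paper's abstract duality identity does all of this implicitly and is shorter. One tiny point to make explicit: your reduction uses $K(f)>0$; this follows from $K(f)\geq \E[V]\,\ess\inf_\P f$ and the standing convention that $V_\omega$ is not Lebesgue-negligible (so $\E[V]>0$), which is the same bound used in Subsection~\ref{subsec:upperbound}.
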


The first statement is true by the following lemma which is a consequence of orthogonal projection in Hilbert spaces. Independence of the choice of $\F$ follows from Proposition \ref{prop:different_spaces}.

\begin{lemma}\label{lem:projectionlemma}
Let $f \in \F_\subw$, $y \in \R^d$, $\Phi_y \in \PPPhi_y$ and $\D \in \DDD$. Then
\begin{align*}
\inf_{\phi \in \Phi_y} \E \left[\frac{|\phi|^2}{f} \right]
= \sup_{\eta\in S^{d-1}} \frac{\langle y, \eta \rangle^2}{H(\eta,f)}.
\end{align*}
\end{lemma}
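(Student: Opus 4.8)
The plan is to prove Lemma~\ref{lem:projectionlemma} via orthogonal decomposition in the Hilbert space $((L^2)^d,\langle\cdot,\cdot\rangle_f)$. First I would observe that, by Lemma~\ref{lem:spacesdense} and Proposition~\ref{prop:different_spaces} (its argument applied at the level of the underlying $L^2$-spaces), both sides are independent of the choice of $\Phi_y\in\PPPhi_y$ and $\D\in\DDD$, so it suffices to work with $\Phi_y=\Phi_y^\supw$ and $\D=\D_\subs$. The key structural fact is the $\langle\cdot,\cdot\rangle_f$-orthogonal splitting
\begin{align*}
(L^2)^d = \overline{\{\nabla w:\, w\in\D_\subs\}}^{\,\|\cdot\|_f} \oplus \Psi_f,
\end{align*}
where $\Psi_f$ is the $\langle\cdot,\cdot\rangle_f$-orthogonal complement; note that $\phi\in(L^2)^d$ satisfies $\E[(\nabla w)\phi]=0$ for all $w\in\D_\subs$ exactly when $\phi/f$ (equivalently $f\phi$, up to the weight) is $\langle\cdot,\cdot\rangle_f$-orthogonal to every gradient. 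One has to be slightly careful about whether the relevant divergence-free condition defining $\Phi_y^\supw$ is the unweighted one $\E[(\nabla w)\phi]=0$ — it is, by definition — so the clean way is to parametrise: write $\phi = f\psi$ with $\psi\in(L^2)^d$; then $\phi\in\Phi_y^\supw$ iff $\psi\perp_f\{\nabla w\}$ and $\E[f\psi]=y$, and $\E[|\phi|^2/f] = \E[|\psi|^2 f] = \|\psi\|_f^2$. Hence the left-hand side equals $\inf\{\|\psi\|_f^2:\, \psi\in\Psi_f,\ \langle\psi,\mathbf 1\rangle_f\cdot(\text{coordinatewise}) = y\}$, i.e.\ the squared $\|\cdot\|_f$-distance from $0$ to an affine subspace of $\Psi_f$.

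Next I would compute this infimum explicitly. The constraint $\E[f\psi]=y$ reads $\langle\psi, e_i\rangle_f = y_i$ for the constant vector fields $e_i$. Since constants are themselves in $\Psi_f$ (being $\langle\cdot,\cdot\rangle_f$-orthogonal to all gradients, by integration by parts \eqref{eq:partialintegration} which gives $\E[\partial_i w]=0$), the minimiser is the unique element of $\mathrm{span}\{e_1,\dots,e_d\}\cap(\text{affine constraint set})$, namely a constant vector field $\psi\equiv v$ with $Gv = y$ where $G$ is the Gram matrix $G_{ij}=\langle e_i,e_j\rangle_{f,\Psi_f}$ — but here is the subtlety: $e_i$ projected onto $\Psi_f$ need not be $e_i$ itself in general metric situations, yet here it \emph{is}, since $e_i\in\Psi_f$ already. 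So the infimum equals $\min\{ \E[f\,|v|^2] :\, v\in\R^d,\ \text{(the projection of $v$ onto the constraint)}\}$, and unwinding this with $H(\eta,f) = \inf_{w}\|\eta-\nabla w\|_f^2 = \|\,\Pi_{\Psi_f}\eta\,\|_f^2$ (the squared norm of the $\langle\cdot,\cdot\rangle_f$-orthogonal projection of the constant field $\eta$ onto $\Psi_f$) identifies the quadratic form $v\mapsto \E[f|v|^2]$ restricted appropriately with $H$. Concretely, for a unit vector $\eta$ the number $H(\eta,f)$ is the $(\eta,\eta)$-entry of the quadratic form on $\R^d$ given by $v\mapsto \|\Pi_{\Psi_f}v\|_f^2$; since $e_i\in\Psi_f$ this form is just $v\mapsto\E[f|v|^2]$ — wait, that would force $H(\eta,f)=\E[f]=1$ always, which contradicts \eqref{e:bounds_on_HK}.

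So the actual subtlety — and the step I expect to be the main obstacle — is that the constant fields $e_i$ lie in $\Psi_f$ (the $f$-orthocomplement of gradients) but the \emph{closure} $\overline{\{\nabla w\}}^{\|\cdot\|_f}$ can still contain nonconstant fields whose presence makes $\Pi_{\Psi_f}\eta\neq\eta$; equivalently, $\eta$ itself need not be $f$-orthogonal to all gradients even though $\E[\nabla w]=0$, because $\langle\eta,\nabla w\rangle_f = \E[\eta\cdot\nabla w\, f]$ involves the weight. Thus $H(\eta,f) = \|\eta\|_f^2 - \|\Pi_{\overline{\nabla}}\eta\|_f^2 \le \|\eta\|_f^2 = 1$, with equality iff $\eta\perp_f$ all gradients. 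Getting the duality right is exactly the content of the lemma: by standard Hilbert-space duality (Cauchy--Schwarz with equality, or Lagrange multipliers on the affine set),
\begin{align*}
\inf_{\phi\in\Phi_y}\E\Big[\frac{|\phi|^2}{f}\Big]
= \inf\{\|\psi\|_f^2 : \psi\in\Psi_f,\ \langle\psi,e_i\rangle_f=y_i\ \forall i\}
= \sup_{v\in\R^d,\,v\neq 0}\frac{\langle v,y\rangle^2}{\|\Pi_{\Psi_f}v\|_f^2}
= \sup_{\eta\in S^{d-1}}\frac{\langle y,\eta\rangle^2}{H(\eta,f)},
\end{align*}
where the middle equality is the general identity "distance to a hyperplane-type constraint set $=$ sup of linear functional over the dual unit object", and the last uses $\|\Pi_{\Psi_f}v\|_f^2 = |v|^2 H(v/|v|,f) = H(v,f)$ together with $H(\eta,f)>0$ from \eqref{e:bounds_on_HK} (so division is legitimate). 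The thing to verify carefully is that the infimum is attained and that the constrained minimiser indeed lies in $\mathrm{span}\{e_i\}$ projected into $\Psi_f$, so that the finite-dimensional sup over $\eta\in S^{d-1}$ genuinely captures the infimum over the infinite-dimensional set $\Phi_y$ — this is where closedness of $\Phi_y^\supw$ in $(L^2)^d$ and the strict positivity bound $H(\eta,f)\ge\ess\inf_\P f>0$ (uniform in $\eta$, giving coercivity) do the work.
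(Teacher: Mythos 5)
Your final displayed chain of equalities is correct and the argument goes through, but you reach it by a genuinely different route than the paper, and your exposition contains a self-contradiction you never cleanly excise. The paper uses the shift $F:\phi\mapsto(y-\phi)/f$, which maps $\Phi_y^\supw$ onto the \emph{linear} subspace $K=\{\psi: \langle\nabla w,\psi\rangle_f=0\ \forall w,\ \E[\psi f]=0\}=L^{\perp_f}$ with $L=\{\eta-\nabla w\}$; the left side of the lemma is then literally $\|y/f-K\|_f^2$, and the textbook identity $\|x-K\|_f=\sup_{v\in L,\|v\|_f=1}\langle x,v\rangle_f$ finishes the computation in one line because $\langle y/f,\eta-\nabla w\rangle_f=\langle y,\eta\rangle$. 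You instead use the unshifted $\phi\mapsto\phi/f$, land on an \emph{affine} subset of $\Psi_f$ (the $f$-orthocomplement of gradients alone, without the mean-zero constraint), and then invoke a Gram-matrix/Lagrange-multiplier duality $\inf\{\|\psi\|_f^2:\langle\psi,e_i\rangle_f=y_i\}=y^T G^{-1}y=\sup_v\langle v,y\rangle^2/(v^TGv)$ with $G_{ij}=\langle\Pi_{\Psi_f}e_i,\Pi_{\Psi_f}e_j\rangle_f$. This is correct and buys nothing the paper's shift does not; the only extra input you need — invertibility of $G$, i.e.\ linear independence of $\{\Pi_{\Psi_f}e_i\}$ — you correctly extract from the lower bound $H(\eta,f)\geq\ess\inf_\P f>0$ in \eqref{e:bounds_on_HK}. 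So both proofs rest on the same two facts (the $\|\cdot\|_f$-orthogonal decomposition and the strict positivity of $H$), just packaged differently.

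The expositional problem: you twice assert that the constant fields $e_i$ lie in $\Psi_f$, justifying this by the unweighted identity $\E[\partial_i w]=0$. That is false — membership in $\Psi_f$ requires $\E[(\partial_i w)f]=0$, and the weight $f$ is precisely what makes $\langle e_i,\nabla w\rangle_f\neq 0$ in general. You do detect the problem (the remark that it would force $H(\eta,f)\equiv 1$, contradicting \eqref{e:bounds_on_HK}), and the sentence `$\eta$ itself need not be $f$-orthogonal to all gradients \ldots\ because $\langle\eta,\nabla w\rangle_f$ involves the weight' is the correct statement. But the paragraph that immediately precedes it still opens with `the constant fields $e_i$ lie in $\Psi_f$ \ldots\ but \ldots\ $\Pi_{\Psi_f}\eta\neq\eta$', which is literally inconsistent. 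Delete the claim $e_i\in\Psi_f$ outright; once that is gone, the rest — in particular $H(\eta,f)=\|\Pi_{\Psi_f}\eta\|_f^2$ and the final duality — is sound.
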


\begin{proof}
If $y=0$ the statement is clear since $\phi \equiv 0 \in \Phi_0$. Assume $y \neq 0$. Since $H$ does not depend on the choice of $\D \in \DDD$, see Lemma \ref{lem:propertiesofHandsigma}, set $\D:= \D_\subs$. The left-hand side does not depend on the choice of $\Phi_y \in \PPPhi_y$, see Proposition \ref{prop:different_spaces}, and we can choose $\Phi_y:= \Phi_y^\supw$. Recall, that on $(L^2)^d$ we have the inner product $\langle \phi, \psi \rangle_f := \E[\phi \psi f]$ for $\phi$, $\psi \in (L^2)^d$, see \eqref{eq:def_scalarproduct_wrt_f}. We write $\phi \perp_f \psi$ for $\langle \phi , \psi \rangle_f = 0$. Further consider the subspaces of $(L^2)^d$
\begin{align*}
L & := \{ \eta - \nabla w: \, \eta \in \R^d, \, w \in \D\},\\
K & := \{ \phi \in (L^2)^d: \, \langle \nabla w, \phi \rangle_f =0 \, \forall w \in \D, \, \E[ \phi f ] =0\}.
\end{align*}
By definition, $L^{\perp_f} = K$. Therefore $\bar L \oplus K = (L^2)^d$ which can be understood as a variant of Weyl's decomposition, see \eqref{eq:weyl}.
For fixed $y \in S^{d-1}$ and $f \in \F_\subw$ consider the mapping
\begin{align*}
F: \Phi_y \to K: \phi \mapsto (y - \phi)/f.
\end{align*}
Indeed, $F$ maps to $K$: Let $\phi \in \Phi_y$, then $\E [ F (\phi) f]= y-\E[\phi]=0$ and $\langle \nabla w, F(\phi)f \rangle = - \langle \nabla w, \phi \rangle = 0$ for $w \in \D$. Moreover, $F$ is bijective with inverse $F^{-1}: \psi \mapsto y-f\psi $. For $w \in (L^2)^d$ let $\|w-K\|_f:= \inf_{\phi \in K} \|w-\phi\|_f$.

We calculate
\begin{align*}
\inf_{\phi \in \Phi_y} \E\left[\frac{\phi^2}{f}\right] 
 = \inf_{\psi \in K} \E \left[\frac{|y -f\psi|^2}{f}\right]
 = \inf_{\psi \in K} \E \bigg[ \bigg| \frac{y}{f} -\psi \bigg|^2 f\bigg]
= \bigg\| \frac{y}{f} -K\bigg\|_f^2.
\end{align*}
As a consequence of orthogonality on $(L^2(f\P))^d$,
\begin{align}\label{eq:hilbertcorr}
\bigg\|\frac{y}{f}-K\bigg\|_f = \sup_{v \in L, \, \|v\|_f=1}\bigg\langle \frac{y}{f},v\bigg\rangle_f.
\end{align}
In fact, $K$ is closed and for $w \in (L^2)^d$ there exists $\phi_0 \in K$ such that $\|w-K\|_f=\|w-\phi_0\|_f$ and $w-\phi_0 \in K^{\perp_f}$, see e.g.\ the proof of \cite[Theorem 12.4]{Rudin1991Functional}. Then,
$
\|w-K\|_f 
= \|w-\phi_0\|_f 
= \sup_{v \in \overline L, \, \|v\|_f=1}\langle w-\phi_0,v\rangle_f 
= \sup_{v \in \overline L, \, \|v\|_f=1}\langle w,v\rangle_f 
$.
Choose $(v_n)_n \subset L$ converging to $v \in \overline L$ with respect to $\|\cdot\|_f$, where $\|v\|_f=1$. Then $v_n/\|v_n\|_f \in L$ and converges to $v$. Therefore, in \eqref{eq:hilbertcorr} it is sufficient to take supremum only over $v \in L$ with $\|v\|_f=1$. We continue
\begin{align*}
\bigg\|\frac{y}{f} -K\bigg\|_f^2
& = \sup_{v \in L,\, \|v\|_f=1}\bigg\langle \frac{y}{f},v\bigg\rangle_f^2
= \sup_{\begin{subarray}{l} \eta \in \R^d,\, w \in \D: \\ \eta - \nabla  w \neq 0 \end{subarray}} \frac{\langle y/f, \eta - \nabla w \rangle_f^2}{\|\eta -\nabla w\|_f^2}\\
& \overset{(i)}{=} \sup_{\begin{subarray}{l} \eta \in \R^d \setminus\{0\}, \, w \in \D\end{subarray}}  \frac{\langle y, \eta \rangle^2}{\|\eta -\nabla w\|_f^2}
\overset{(ii)}{=} \sup_{\begin{subarray}{l} \eta \in S^{d-1},\, w \in \D\end{subarray}} \frac{\langle y, \eta \rangle^2}{\|\eta -\nabla w\|_f^2}
= \sup_{\eta \in S^{d-1}}\frac{\langle y, \eta \rangle^2}{H(\eta,f)}.
\end{align*}
For equality $(i)$ we used integration by parts and the fact that $\eta = \nabla w$ if and only if $\eta = \nabla w=0$. Indeed, if $\eta = \nabla w$, then $\eta = \E \eta = \E [\nabla w ]= 0$. In the case $\eta = 0$ and $\nabla w \neq 0$ the term after $(i)$ in the above calculations equals zero and can be omitted. For equality $(ii)$ we used the one-to-one transformation $w \mapsto |\eta| w$ of $\D$.
\end{proof}

Propositions \ref{prop:first_equality} and \ref{t:alternativerep1} together with the following result show \eqref{eq:toshow_for_equality}.

\begin{proposition}\label{prop:finalequality}
Let $V$ be a potential and let $\F \in \FFF$ such that $\F \subset \F_\subw^2$. Assume $\inf_{f \in \F} K(f)>0$, then
\begin{align}
\inf_{f \in \F} \sup_{\eta \in S^{d-1}}
\left[ 2 K(f) \frac{\langle y,\eta\rangle^2}{H(\eta, f)} \right]^{1/2} = \sup_{\eta \in S^{d-1}} \inf_{f \in \F} 
\left[ 2 K(f) \frac{\langle y,\eta\rangle^2}{H(\eta, f)} \right]^{1/2}.\label{eq:equality}
\end{align}
\end{proposition}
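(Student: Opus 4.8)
The plan is to recognize \eqref{eq:equality} as an instance of a minimax theorem and to verify the hypotheses that make the exchange of $\inf_f$ and $\sup_\eta$ legitimate. Write $\Psi(f,\eta) := \bigl[ 2 K(f) \langle y,\eta\rangle^2 / H(\eta,f) \bigr]^{1/2}$, defined for $f \in \F$ and $\eta \in S^{d-1}$. The inequality $\sup_\eta \inf_f \Psi \leq \inf_f \sup_\eta \Psi$ always holds, so the work is entirely in the reverse inequality. First I would reduce to a setting where a standard minimax theorem (Sion's theorem, or the Ky Fan inequality) applies: the index set $S^{d-1}$ is compact but not convex, so I would re-parametrize. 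Since $\Psi(f,\eta)$ depends on $\eta$ only through the ratio $\langle y,\eta\rangle^2 / H(\eta,f)$, and since $H(\cdot,f)^{1/2}$ is a seminorm on $((L^2)^d, \|\cdot\|_f)$ by Lemma \ref{lem:propertiesofHandsigma}, I would pass from $\eta \in S^{d-1}$ to directions $\lambda \in \R^d$ parallel to some $\eta$, using $\langle y,\lambda\rangle^2 / H(\lambda,f) = \langle y,\eta\rangle^2 / H(\eta,f)$ by \eqref{eq:H(lambda)=lambda^2H}; the supremum over $\eta \in S^{d-1}$ becomes a supremum over the convex set $\R^d$ of the function $\lambda \mapsto \langle y,\lambda\rangle^2 / H(\lambda,f)$ (after checking this extended function is still the relevant quantity — the value at $\lambda$ a nonzero multiple of $\eta$ is unchanged, and one must handle the degenerate directions where $\langle y,\eta\rangle = 0$, which contribute $0$ and can be ignored since $K(f)>0$ forces the sup to be positive when $y\neq 0$).

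Next I would address the convexity/concavity structure. The set $\F \subset \F_\subw^2$ is convex (convex combinations of probability densities bounded below stay in $\F_\subw^2$, and $\F \in \FFF$ plays no role here — actually one should be careful: $\F$ itself need not be convex, so I would instead replace $\F$ by $\F_\subw^2$, which is convex, using $\inf_{f\in\F} K(f) = \inf_{f \in \F_\subw^2}K(f)$ from \eqref{eq:sigma_independent_ofchoiceof_F} together with Proposition \ref{prop:different_spaces}-type density arguments to see both sides of \eqref{eq:equality} are unchanged when $\F$ is enlarged to $\F_\subw^2$). On $\F_\subw^2$: the map $f \mapsto K(f) = \E[|\nabla f|^2/(8f) + Vf]$ is convex (the term $|\nabla f|^2/f$ is jointly convex in $(f,\nabla f)$, a standard fact), and $f \mapsto H(\eta,f) = \inf_{w}\E[|\eta-\nabla w|^2 f]$ is concave (an infimum of linear-in-$f$ functionals). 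I would like to conclude that $f \mapsto K(f)/H(\eta,f)$, hence $\Psi(f,\eta)$, is quasi-convex in $f$ for each fixed $\eta$. This is the crux: a ratio (convex)/(concave positive) is quasi-convex, and composing with the increasing map $t \mapsto (2\langle y,\eta\rangle^2 t)^{1/2}$ preserves quasi-convexity. On the other side, I need $\lambda \mapsto \langle y,\lambda\rangle^2 / H(\lambda,f)$, or rather $\Psi$ as a function of the $\eta$-variable, to be quasi-concave; since $H(\cdot,f)^{1/2}$ is a seminorm, $\lambda \mapsto \langle y,\lambda\rangle / H(\lambda,f)^{1/2}$ is a ratio of a linear functional and a seminorm, whose square is quasi-concave on each halfspace $\{\langle y,\lambda\rangle \geq 0\}$ — here one must restrict to that halfspace (harmless by symmetry $\eta \leftrightarrow -\eta$) and then check quasi-concavity there.

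With convexity of the domains and quasi-convexity/quasi-concavity of $\Psi$ in place, together with continuity of $H(\cdot,f)$ in $\|\cdot\|_2$ (Lemma \ref{lem:propertiesofHandsigma}) to get upper/lower semicontinuity of $\Psi$, Sion's minimax theorem yields \eqref{eq:equality}, provided one of the two domains can be taken compact. The $\eta$-domain $S^{d-1}$ is compact, which is the natural choice; so I would run Sion's theorem with $\eta$ ranging over the compact set $S^{d-1}$ and $f$ over the convex set $\F_\subw^2$, needing only $\Psi(\cdot,\eta)$ quasi-convex and lower semicontinuous in some weak sense and $\Psi(f,\cdot)$ quasi-concave and upper semicontinuous on $S^{d-1}$ (the latter is automatic from continuity of $H(\eta,f)$ in $\eta$ for fixed $f$, which follows since $\eta \mapsto H(\eta,f)$ is a seminorm squared, hence continuous). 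The main obstacle I anticipate is the quasi-concavity of $\Psi(f,\cdot)$ on $S^{d-1}$: since $S^{d-1}$ is not convex, "quasi-concave on $S^{d-1}$" must be interpreted via the identification with a convex cone, and verifying that $\eta \mapsto \langle y,\eta\rangle^2/H(\eta,f)$ is genuinely quasi-concave after this identification — rather than merely its restriction to great circles through $\pm y$ — requires care; an alternative, cleaner route, which I would fall back on if the direct verification is awkward, is to prove \eqref{eq:equality} by exhibiting a saddle point explicitly, using that for the optimal $f^\ast$ (a minimizer, whose existence one gets from the direct method: coercivity of $K$ and weak lower semicontinuity) the optimal $\eta$ solves a concrete eigenvalue-type problem for the seminorm $H(\cdot,f^\ast)$, and conversely.
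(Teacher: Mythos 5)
Your instincts are right that this is a minimax question and you correctly identify the key structural facts (that $H(\cdot,f)^{1/2}$ is a seminorm, that $K$ is convex, that $H(\eta,\cdot)$ is concave, hence that $K(f)/H(\eta,f)$ is quasi-convex in $f$). But your plan has a genuine gap precisely at the step you yourself flag as the obstacle, and it is not cosmetic. The version of Sion's theorem that accepts quasi-convexity/quasi-concavity (Sion's Theorem 3.4) needs \emph{both} index sets to be convex; $S^{d-1}$ is not, and the natural convex reparametrizations (the cone, or the affine hyperplane $\{\langle y,\lambda\rangle = 1\}$) destroy compactness. Conversely, the version of Sion that works with a non-convex compact side via a covering condition (Theorem 4.1$'$, which the paper quotes as Theorem \ref{t:concaveconvex}) requires the function to be \emph{convexlike} in the other variable, which is not implied by quasi-convexity: quasi-convexity gives $\Psi(tf_1+(1-t)f_2,\eta)\le\max\{\Psi(f_1,\eta),\Psi(f_2,\eta)\}$, which is in general \emph{larger} than the convex combination $t\Psi(f_1,\eta)+(1-t)\Psi(f_2,\eta)$. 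So neither minimax theorem applies directly to $\Psi$ as you have set it up, and your proposed fallback (constructing an explicit saddle point by the direct method) would be a substantially different, and much harder, proof that you have not carried out.

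The paper's resolution is a linearization trick (from Schroeder): instead of exchanging $\inf$ and $\sup$ for the ratio $K(f)/H(\eta,f)$ directly, one fixes a parameter $l$ and interchanges for $F(\eta,f):=K(f)-l^2 H(\eta,f)/\langle y,\eta\rangle^2$. This $F$ is genuinely \emph{convex} in $f$ on $\F\subset\F_\subw^2$ — in fact it differs from the convex functional $I(f)$ of Proposition \ref{prop:formulaforI} by an affine term, via the choice $|\lambda|=l\sqrt2/|\langle y,\eta\rangle|$ — hence convexlike, which is what Theorem \ref{t:concaveconvex} demands. On the $\eta$ side the paper establishes concavelike-ness through exactly the reparametrization you anticipated, $\lambda_i:=\eta_i/\langle y,\eta_i\rangle$, combined with the triangle inequality \eqref{e:triangle} for the seminorm $H(\cdot,f)^{1/2}$. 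The covering condition of Theorem \ref{t:concaveconvex} is then checked by compactness of $S^{d-1}$ after continuously extending $F(\cdot,f)$ to $-\infty$ on $Q=\{\eta:\langle y,\eta\rangle=0\}$. Finally — and this is where the hypothesis $\inf_{f\in\F}K(f)>0$ actually does its work, a point your proposal does not locate — the paper converts the linearized conclusion
\begin{align*}
0 \le \sup_{\eta}\inf_{f}\bigl\{K(f)-l^2 H(\eta,f)/\langle y,\eta\rangle^2\bigr\}
\end{align*}
back into the ratio form $|l|\le\sup_\eta\inf_f[K(f)\langle y,\eta\rangle^2/H(\eta,f)]^{1/2}$ by an elementary $\epsilon$-argument (the analogue of Schroeder's Lemma 4.5), which is only valid because $\inf_f K(f)>0$. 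In short: the right way to save your approach is not to fight with quasi-convexity, but to linearize in $l$, which simultaneously restores genuine convexity in $f$ and makes the role of the positivity hypothesis transparent.
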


\begin{proof}
One estimate is obvious. If $y=0$ \eqref{eq:equality} holds trivially. Without restriction we assume $y \in S^{d-1}$. 

We introduce for $f \in \F_\subw$,
\begin{align}\label{eq:defJ}
J(f):= \inf_{\eta \in S^{d-1}}  H(\eta, f)/ \langle y,\eta\rangle^2.
\end{align}
One has $J(f)>0$. Indeed, $H(\cdot,f)$ is continuous with respect to the topology on $S^{d-1}$, see \eqref{eq:Hcontinuous}. $S^{d-1}$ is compact and $H(\cdot,f)$ attains its infimum on $S^{d-1}$ in some point $\eta_0$ with $H(\eta_0,f)>0$, see \eqref{e:bounds_on_HK}. This shows
$J(f) \geq H(\eta_0,f) \inf_{\eta \in S^{d-1} } 1/\langle y, \eta \rangle^2 = H(\eta_0,f) >0$.

Throughout the following let $\F \in \FFF$ such that $\F \subset \F_\subw^2$. One has for $l \in \R$,
\begin{align}\label{eq:simplifiedcorresp2}
|l| \leq \inf_{f \in \F}\{K(f) / J(f)\}^{1/2}
\text{ if and only if }
0 \leq \inf_{f \in \F}\{ K(f) - l^2 J(f)\}.
\end{align}
Denote by $Q$ the set of $\eta \in S^{d-1}$ which are perpendicular to $y$. Since $H(\cdot,f)$ is positive on $S^{d-1}$, \eqref{eq:simplifiedcorresp2} is equivalent to
\begin{align}\label{e:tointerchange_minmax}
0 \leq \inf_{f \in \F} \sup_{\eta \in S^{d-1}\setminus Q} \{ K(f) - l^2  H(\eta,f)/\langle y, \eta\rangle^2 \}.
\end{align}

We are going to interchange $\inf$ and $\sup$ with the help of a minimax theorem, see \cite[Theorem 4.1']{Sion1958}, which we state here.

\begin{theorem} \label{t:concaveconvex}
Let $M$ and $N$ be any spaces, $F$ a function on $M \times N$ that is concave-convexlike. If for any
$c>\sup_{\mu \in M} \inf_{\nu \in N} F(\mu,\nu)$
there exists a finite subset $Y \subset N$ such that for any $\mu \in M$ there is a $y \in Y$ with $F(\mu,y)< c$, then
\begin{align*}
\sup_{\mu \in M} \inf_{\nu \in N} F(\mu,\nu) = \inf_{\nu \in N} \sup_{\mu \in M} F(\mu,\nu).
\end{align*}
\end{theorem}
A function $F$ on $M\times N$ is defined to be concavelike in $M$ if for any $\mu_1, \mu_2 \in M$ and $0\leq t \leq 1$ there exists a $\mu \in M$ such that for all $\nu \in N$,
\begin{align*}
tF(\mu_1,\nu) +(1-t) F(\mu_2,\nu) \leq F(\mu,\nu).
\end{align*}
Convexlike is defined analogously and a concave-convexlike function is concavelike in the first component and convexlike in the second.

We apply Theorem \ref{t:concaveconvex} to $M:=S^{d-1}\setminus Q$, $N:= \F$ and the function $F: M\times N \to \R$,
\begin{align*}
F(\eta,f) := K(f) - l^2 H(\eta,f)/\langle y,\eta \rangle^2.
\end{align*}
The interchange of infimum and supremum is trivial if $l=0$. Thus let $l \neq 0$. From the definition of $I$ it follows that $f \mapsto I(f)$ is convex. We apply the formula for $I(f)$, $f \in \F_\subw^2$, established in Proposition \ref{prop:formulaforI} to the vector $\lambda$ in direction of $\eta \in S^{d-1}\setminus Q$ with norm $|\lambda|=l\sqrt{2} /|\langle y, \eta \rangle|$ and get that the functional $I$ for $L^\lambda = (1/2)\Laplace + \lambda \nabla$ and $f \in \F_\subw^2$ is given by
\begin{align*}
I(f) 
& =\E\left[\frac{|\nabla f|^2}{8f}\right] + \frac{l^2}{\langle y, \eta \rangle^2} - \frac{l^2 H(\eta, f)}{\langle y, \eta\rangle^2},
\end{align*}
use \eqref{eq:H(lambda)=lambda^2H}. Therefore, $F(\eta,f) = I(f) - l^2 /\langle y, \eta \rangle^2 + \E[Vf]$ and $F$ is convex in $f$ for any $\eta \in M$. In particular, $F$ is convexlike in $N$.

In order to show that $F(\eta,f)$ is concavelike in $M$, choose $\eta_1$ and $\eta_2 \in S^{d-1}\setminus Q$, let $t \in [0,1]$. For $i=1,2$ set
\begin{align*}
\lambda_i:= \frac {\eta_i}{\langle y,\eta_i\rangle}
\end{align*}
which is well defined since $\eta_i \not\perp y$. Note that $t \lambda_1 + (1-t)\lambda_2 \neq 0$, since $\lambda_1 \in \spanning(\lambda_2)$ occurs only if $\lambda_1 = \lambda_2$. Hence, we can further choose
\begin{align*}
\eta_0:=\frac{t\lambda_1 + (1-t)\lambda_2}{|t\lambda_1 + (1-t)\lambda_2|}.
\end{align*}
One has
\begin{align}\label{eq:xieta}
\langle \eta_0 , y \rangle = 1/ |t \lambda_1 + (1-t)\lambda_2)|,
\end{align}
and $\eta_0 \notin Q$. By triangle inequality \eqref{e:triangle} and using convexity of $x \mapsto x^2$,
\begin{align*}
H(t\lambda_1 + (1-t)\lambda_2,f) 
& \leq t H( \lambda_1,f) + (1-t) H(\lambda_2,f).
\end{align*}
With \eqref{eq:xieta} we get for all $f \in \F$,
\begin{align*}
H(\eta_0,f) /\langle y, \eta_0 \rangle^2 
& = H(t\lambda_1 + (1-t)\lambda_2,f)\\
& \leq t H( \eta_1,f)/\langle y,\eta_1\rangle^2 + (1-t) H(\eta_2,f)/\langle y,\eta_2\rangle^2,
\end{align*}
thus $F$ is concavelike in the first component.

Recall that for $f$ fixed, $H(\cdot,f)$ restricted to $S^{d-1}$ is continuous, see \eqref{eq:Hcontinuous}. Hence, $F(\cdot,f)$ is continuous on $S^{d-1}\setminus Q$. We have $l>0$, and we may extend $F(\cdot,f)$ to a continuous function $\bar F(\cdot,f):S^{d-1} \to \R \cup \{-\infty\}$ by defining it to be $-\infty$ on $Q$. In order to see this, recognise that the uniform lower bound $H(\cdot,f)\geq\ess \inf_\P f$ assures for $C>0$,
\begin{align*}
\{ \eta \in S^{d-1}:\, H(\eta,f)/\langle \eta,y\rangle^2 >C \} 
\supset \{ \eta \in S^{d-1}:\, \ess \inf_\P f / \langle \eta,y\rangle^2 >C  \}.
\end{align*}
The latter is open in $S^{d-1}$. This implies continuity of $\bar F(\cdot,f)$ at any $\eta \in Q$, use \cite[Definition 7.1]{Willard1970}. Let
\begin{align*}
c>\sup_{\eta \in S^{d-1}\setminus Q} \inf_{f \in \F} F(\eta,f).
\end{align*}
We conclude as in \cite[Theorem 4.2]{Sion1958}: For each $f$ define $A_f:=\{\eta \in S^{d-1}: \bar F(\eta, f) <c\}$. The sets $A_f$, $f \in \F$, are open subsets of $S^{d-1}$ and cover $S^{d-1}$. Since $S^{d-1}$ is compact, there exists a finite subset $Y \subset \F$ such that the $A_f$, $f \in Y$, cover $S^{d-1}$. Hence, for any $\eta \in S^{d-1} \setminus Q$ there exists a $f \in Y$ such that $\bar F(\eta, f) = F(\eta, f) <c$.

Consequently, we can apply Theorem \ref{t:concaveconvex} to \eqref{e:tointerchange_minmax} and get
\begin{align}\label{e:final_zero_equation}
0
&\leq \inf_{f \in \F} \sup_{\eta \in S^{d-1}\setminus Q} \{ K(f) - l^2  H(\eta,f)/\langle y, \eta\rangle^2 \}\nonumber\\
&= \sup_{\eta \in S^{d-1}\setminus Q} \inf_{f \in \F}  \{ K(f) - l^2  H(\eta,f)/\langle y, \eta\rangle^2 \}.
\end{align}

As in \cite[Lemma 4.5]{Schroeder88} we deduce
  
\begin{lemma}
Assume $\inf_{f\in \F} K(f)>0$, then \eqref{e:final_zero_equation} implies
\begin{align}\label{eq:l_leq_supetainff}
|l| \leq \sup_{\eta \in S^{d-1}\setminus Q} \inf_{f \in \F}  [ K(f) \langle y, \eta\rangle^2 / H(\eta,f) ]^{1/2}.
\end{align}
\end{lemma}

\begin{proof}
By \eqref{e:final_zero_equation} for any $\epsilon>0$ exists $\eta \in S^{d-1}\setminus Q$, such that for $f \in \F$,
\begin{align*}
-\epsilon <  K(f) - l^2  H(\eta,f)/\langle y, \eta\rangle^2 .
\end{align*}
That is, for $f \in \F$ one has $l^2 < (K(f) +\epsilon)\langle y, \eta\rangle^2 /H(\eta,f)$. Since $\tilde \sigma(0):=\inf_{f \in \F} K(f)>0$,
\begin{align*}
l^2 < (1+\epsilon/\tilde\sigma(0))K(f)\langle y, \eta\rangle^2/H(\eta,f).
\end{align*}
This implies
$l^2 \leq (1+\epsilon/\tilde\sigma(0)) \inf_{f \in \F}\{K(f)\langle y, \eta\rangle^2/H(\eta,f)\}$.
Thus,
\begin{align*}
l^2 \leq (1+\epsilon/\tilde\sigma(0)) \sup_{\eta \in S^{d-1}\setminus Q}\inf_{f \in \F}\{K(f)\langle y, \eta\rangle^2/H(\eta,f)\}.
\end{align*}
Since $\epsilon>0$ is chosen arbitrarily, \eqref{eq:l_leq_supetainff} follows.
\end{proof}

\eqref{eq:l_leq_supetainff}, \eqref{e:final_zero_equation} together with \eqref{eq:simplifiedcorresp2} prove Proposition \ref{prop:finalequality}.
\end{proof}

This completes the proof of \eqref{eq:toshow_for_equality}.
The previous argument also shows that $\Gamma_V$ solves a variational equation: Recall the definition of $K$ and $J$ given in \eqref{eq:defK} and \eqref{eq:defJ} respectively.

\begin{proposition}\label{prop:ga_sol_of_minimizationproblem}
Let $V$ be a potential in $L^2$. Let $\F \in \FFF$, $\D \in \DDD$ and $y \neq 0$. Assume $\sigma(0)>0$. Then $\Gamma_V(y)$ is the unique nonnegative solution $l$ of
$0 = \inf_{f \in \F} \{ 2 K(f) - l^2 J(f) \}$.
\end{proposition}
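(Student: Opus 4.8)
The plan is to reduce the statement to elementary concavity and continuity properties of the auxiliary function $m(l):=\inf_{f\in\F}\{2K(f)-l^2 J(f)\}$, $l\ge 0$. First I would combine Proposition~\ref{t:alternativerep1}, which applies to an arbitrary $\F\in\FFF$ since $V\in L^2$, with the definition \eqref{eq:defJ} of $J$ (so that $\sup_{\eta\in S^{d-1}}\langle y,\eta\rangle^2/H(\eta,f)=1/J(f)$), to write $\Gamma_V(y)=\inf_{f\in\F}[2K(f)/J(f)]^{1/2}$. Recall from the proof of Proposition~\ref{prop:finalequality} that $J(f)>0$ for every $f\in\F_\subw$ — this uses compactness of $S^{d-1}$, continuity of $H(\cdot,f)$, and the lower bound in \eqref{e:bounds_on_HK}. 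Hence, for $l\ge 0$ one has the elementary equivalence (the analogue of \eqref{eq:simplifiedcorresp2})
\begin{align*}
l\le\Gamma_V(y)\iff l^2\le 2K(f)/J(f)\text{ for all }f\in\F\iff m(l)\ge 0.
\end{align*}

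Next I would collect the structural properties of $m$. Since $J(f)>0$, each map $l\mapsto 2K(f)-l^2 J(f)$ is concave and non-increasing on $[0,\infty)$, hence so is their pointwise infimum $m$; in particular $m$ is continuous on $(0,\infty)$. By \eqref{eq:sigma_independent_ofchoiceof_F}, using $V\in L^2$, we have $m(0)=2\inf_{f\in\F}K(f)=2\sigma(0)>0$. Fixing any $f_0\in\F$ gives $m(l)\le 2K(f_0)-l^2 J(f_0)\to-\infty$ as $l\to\infty$, so by the equivalence above $\Gamma_V(y)<\infty$; moreover $\Gamma_V(y)\ge|y|\sqrt{2\sigma(0)}>0$, since $J(f)\le 1/|y|^2$ (take $\eta=y/|y|$ in \eqref{eq:defJ} and use \eqref{e:bounds_on_HK}) while $K(f)\ge\sigma(0)$. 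Thus $\Gamma_V(y)$ is an interior point of $[0,\infty)$.

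Finally I would conclude. Taking $l_n\downarrow\Gamma_V(y)$ with $l_n>\Gamma_V(y)$, the equivalence gives $m(l_n)<0$, and continuity of $m$ at the interior point $\Gamma_V(y)$ yields $m(\Gamma_V(y))=\lim_n m(l_n)\le 0$, while $m(\Gamma_V(y))\ge 0$ again by the equivalence; hence $m(\Gamma_V(y))=0$, i.e.\ $\Gamma_V(y)$ solves $0=\inf_{f\in\F}\{2K(f)-l^2 J(f)\}$. For uniqueness among $l\ge 0$, concavity of $m$ together with $m(0)>0=m(\Gamma_V(y))$ forces $m(l)\ge(1-l/\Gamma_V(y))\,m(0)>0$ for $0\le l<\Gamma_V(y)$ (the graph lies above the chord joining $(0,m(0))$ and $(\Gamma_V(y),0)$), while $m(l)<0$ for $l>\Gamma_V(y)$ by the equivalence; so $\Gamma_V(y)$ is the unique nonnegative zero of $m$. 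The only genuinely non-formal point is upgrading $m(\Gamma_V(y))\ge 0$ to $m(\Gamma_V(y))=0$, i.e.\ excluding a strictly positive infimum at $l=\Gamma_V(y)$; this is exactly where the concavity of $m$ and the strict positivity $\Gamma_V(y)>0$ enter, and both are cheap given the ingredients above.
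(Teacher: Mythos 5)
Your proof is correct and takes essentially the same route as the paper's (very terse) one-sentence argument: both rest on the reformulation of $\Gamma_V(y)$ from Proposition~\ref{t:alternativerep1} combined with \eqref{eq:defJ}, the equivalence \eqref{eq:simplifiedcorresp2}, the concavity and monotonicity of the auxiliary map (the paper's $\bar M$, your $m$), and $\bar M(0)=\sigma(0)>0$ via \eqref{eq:sigma_independent_ofchoiceof_F}. You simply spell out the finiteness, continuity and chord arguments that the paper leaves implicit in the phrase ``the statement now results from~\dots''.
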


\begin{proof}
Set $\bar M (l):=\inf_{f \in \F} \{ K(f) - l^2 J(f)\}$. $\bar M$ is decreasing and concave on $[0,\infty)$, which follows as in \eqref{eq:M_concave} and \eqref{eq:M_decreasing}. The statement now results from Proposition \ref{t:alternativerep1}, \eqref{eq:simplifiedcorresp2} and $\bar M (0) = \sigma(0)>0$.
\end{proof}

\subsection{Almost Sure Equality on whole \texorpdfstring{$\R^d$}{Rd}}
\label{subsec:equalityonwholeR}

Let $(\Omega,\FF,\P,\tau)$ be an ergodic dynamical system and $V$ be a \pott{}. We have shown in subsections \ref{subsec:upperbound}, \ref{subsec:lowerbound} and \ref{subsec:equivalence} that for any $y \in \R^d$ $\P$-a.s.\ $\Gamma_V(y)=\RR(-\Lambda_\omega)(y)$. The stronger statement that equality holds $\P$-a.s.\ for any $y$ is implied by the following: $V$ is bounded and we have $0 \geq \Lambda_\omega(\lambda) \geq -\vmax$. Hence, $\sup\{|\lambda|:\, -\Lambda_\omega(\lambda)-\lambda^2/2 > 0\}<\infty$ and we get by Lemma \ref{lem:inverseofR} continuity of $\RR(-\Lambda_\omega)$. In the same way $\Gamma_{V}=\RR(\sigma)$ is continuous, use \eqref{eq:M_concave} and estimate the infimum there by choosing $f\equiv 1$ to see $\sigma(\lambda)-\lambda^2/2 \leq \E[V] - \lambda^2/2$, where we used \eqref{eq:H(lambda)=lambda^2H} and \eqref{e:bounds_on_HK}. Therefore, equality holding $\P$-a.s.\ on a dense subset of $\R^d$ ensures $\Gamma_{V}=\RR(-\Lambda_\omega)$ on whole $\R^d$.

\section{Appendix}\label{app:appendix}

\subsection{Condition (E1)}\label{app:homogenization}

Condition (E1) is valid as soon as homogenization takes place and the effective Hamiltonian has a variational expression as outlined in this subsection. For an overview see also \cite{Kosygina07}. Throughout the following let $\F:=\F_\subw^2$ and $\D:=\D_\subw^2$.

Let $\lambda \in \R^d$. For $(t,x,\omega) \in [0,\infty)\times\R^d\times\Omega$ and $\epsilon>0$ define
\begin{align*}
u_\epsilon(t,x,\omega) := \epsilon \ln E_{x/\epsilon}^\lambda\bigg[\exp\bigg\{-\int_0^{t/\epsilon} V_\omega(Z_s)ds\bigg\}\bigg].
\end{align*}
Assuming a Feynman-Kac correspondence, see the comment after Proposition \ref{prop:logmomgenfuncupperbound}, $u_\epsilon$ solves the Hamilton-Jacobi-Bellman equation
\begin{align}\label{eq:HJB}
\partial_t u_\epsilon(t,x,\omega) 
&= \frac{\epsilon}{2} \Laplace u_\epsilon(t,x,\omega) + \hamil^\lambda\bigg(\nabla u_\epsilon (t,x,\omega), \frac{x}{\epsilon},\omega\bigg)
\end{align}
with initial condition $u_\epsilon (0,\cdot,\omega) \equiv 0$, and Hamiltonian $\hamil^\lambda: \R^d \times \R^d \times \Omega \to \R$ given as $\hamil^\lambda(p,x,\omega):= (1/2)p^2 + \lambda p - V_\omega(x)$. Let $\lagrange^\lambda(q,x,\omega) 
:= (1/2)(q-\lambda)^2 + V_\omega(x)
$, the convex conjugate of $\hamil^\lambda(\cdot,x,\omega)$.

In \cite{Kosygina2006} it is shown that homogenization of \eqref{eq:HJB} takes place: For $\lambda \in \R^d$ $\P$-a.s.,
\begin{align}\label{ass:Lambda=effectiveHamiltonian}
\lim_{t \to \infty} \frac{1}{t}\ln E^\lambda\bigg[\exp\bigg\{-\int_0^tV_\omega(Z_s)ds\bigg\}\bigg]
= \bar\hamil^\lambda(0),
\end{align}
where $\bar\hamil^\lambda:\R^d \to \R$ is the effective Hamiltonian $\bar\hamil^\lambda(p) := \sup_{(b,f) \in \EE}\E[(pb - \lagrange^\lambda(b_\omega,0,\omega))f]$, with $\B:=L^\infty$ and
\begin{align}\label{e:defofE}
\EE:=\bigg\{(b,f)\in \B \times \F:\, \frac{1}{2}\Laplace f = \nabla (bf)\bigg\}.
\end{align}
Equation $(1/2)\Laplace f = \nabla(bf)$ has to be interpreted in the `distributional sense' on $\R^d$, see \cite[(6.2)]{Kosygina07}, i.e.\ $\P$-a.s.\ for any $\varphi \in C_\compactc^\infty$ one has $\int ((1/2)\Laplace f_\omega) \varphi dx = - \int b_\omega f_\omega \nabla\varphi dx$. By Lemma \ref{lem:weakdivergence} any $(b,f) \in \EE$ satisfies for all $w \in \D$,
\begin{align}\label{e:defofE_doesHoldForD}
\int\bigg(\frac{1}{2} \Laplace w + b \nabla w \bigg)fd\P = 0.
\end{align}

We have the following estimate on the effective Hamiltonian:

\begin{proposition}\label{prop:logmomgenfuncupperbound}
For all $\lambda \in \R^d$ one has $\bar\hamil^\lambda(0) \leq -\sigma(\lambda)$.

\end{proposition}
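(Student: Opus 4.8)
The goal is the inequality $\bar\hamil^\lambda(0) \leq -\sigma(\lambda)$, i.e., that the supremum defining $\bar\hamil^\lambda(0)$ over pairs $(b,f) \in \EE$ is bounded above by $-\sigma(\lambda)$. Recall $\bar\hamil^\lambda(0) = \sup_{(b,f)\in\EE}\E[-\lagrange^\lambda(b_\omega,0,\omega)f] = \sup_{(b,f)\in\EE}\E[-(\tfrac12(b-\lambda)^2 + V)f]$, while by \eqref{eq:formula_for_sigma} (with $\D = \D_\subw^2$) $-\sigma(\lambda) = \sup_{f\in\F}\{-\E[Vf + |\nabla f|^2/(8f)] - \lambda^2/2 + \tfrac12\inf_{w\in\D}\E[|\lambda-\nabla w|^2 f]\}$. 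So the plan is to fix an arbitrary $(b,f)\in\EE$ and produce a lower bound on $-\sigma(\lambda)$ using the same $f$, then show $-\E[(\tfrac12(b-\lambda)^2+V)f]$ does not exceed that lower bound.

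First I would exploit the constraint \eqref{e:defofE_doesHoldForD}: for every $w \in \D = \D_\subw^2$ one has $\int(\tfrac12\Laplace w + b\cdot\nabla w)f\,d\P = 0$. Applying integration by parts \eqref{eq:partialintegration} to the first term, $\E[(\Laplace w)f] = -\E[\nabla w\cdot\nabla f]$, so the constraint reads $\E[b\cdot\nabla w\, f] = \tfrac12\E[\nabla w\cdot\nabla f]$ for all $w\in\D$. The key algebraic step is then to complete the square: for any $w\in\D$,
\begin{align*}
\tfrac12\E[(b-\lambda)^2 f] &= \tfrac12\E[(b - \lambda - \nabla w + \nabla w)^2 f]\\
&= \tfrac12\E[(b-\lambda-\nabla w)^2 f] + \E[(b-\lambda-\nabla w)\cdot\nabla w\, f] + \tfrac12\E[|\nabla w|^2 f].
\end{align*}
Using the constraint to rewrite the cross term $\E[b\cdot\nabla w\,f] = \tfrac12\E[\nabla w\cdot\nabla f]$ and the identity $\tfrac12\E[\nabla w\cdot\nabla f] = \E[\nabla w\cdot\nabla(\sqrt f)\sqrt f]$, one can reorganize to isolate $\E[|\nabla f|^2/(8f)]$ together with the $H$-type term $\tfrac12\E[|\lambda-\nabla w|^2 f]$. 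Concretely, I expect the identity to take the form $\tfrac12\E[(b-\lambda)^2 f] + \E[Vf] = -\bar\hamil^\lambda(0)\text{-integrand summed}$ matching, after rearrangement and dropping the nonnegative term $\tfrac12\E[(b-\lambda-\nabla w - (\nabla f)/(2f)\cdot(\text{something}))^2 f]$, the expression $\E[Vf] + \E[|\nabla f|^2/(8f)] + \lambda^2/2 - \tfrac12\E[|\lambda-\nabla w|^2 f]$. Taking the infimum over $w\in\D$ on the right and recalling \eqref{eq:formula_for_sigma} then yields $\tfrac12\E[(b-\lambda)^2 f] + \E[Vf] \geq \sigma(\lambda)$, which is exactly $-\E[\lagrange^\lambda(b_\omega,0,\omega)f] \leq -\sigma(\lambda)$; taking the supremum over $(b,f)\in\EE$ finishes the proof.

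The main obstacle is the bookkeeping in the completion of the square: one must correctly pass from the constraint on $b$ (which only controls $b$ tested against gradients $\nabla w$) to a pointwise-in-$\Omega$ statement, and the natural way is to recognize $b$ modulo the space of gradients. Since \eqref{e:defofE_doesHoldForD} only pins down the component of $b$ along $\{\nabla w : w\in\D\}$ in the $\langle\cdot,\cdot\rangle_f$ inner product, the orthogonal component of $b$ is free, and it is the choice $b = \lambda + \nabla w + (\text{orthogonal complement})$ that must be handled so that the cross terms vanish and only a manifestly nonnegative square remains; this is where I would need to invoke the Weyl-type decomposition (the splitting $(L^2)^d = \overline L \oplus K$ from the proof of Lemma \ref{lem:projectionlemma}) to justify that minimizing over $w\in\D$ captures the worst case. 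A secondary technical point is verifying that $f \in \F = \F_\subw^2$ has enough regularity ($f$ bounded, bounded away from zero, $\nabla f, \Laplace f$ bounded) for all the integrations by parts and for $|\nabla f|^2/(8f)$ to be integrable — but this is built into the definition of $\F_\subw^2$, so it is routine. Once the square is completed correctly the inequality is immediate.
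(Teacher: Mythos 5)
Your proposal is correct, and it takes a genuinely different route from the paper's. The paper works ``from above'': rewrite $\bar\hamil^\lambda(0)=\sup_{(b,f)\in\EE}\E[-\lagrange^\lambda f]$ as a Lagrangian relaxation $\sup_f\sup_b\inf_{w}\int(-\lagrange^\lambda+\tfrac12\Laplace w+b\nabla w)f\,d\P$ (the $\inf_w$ of the linear penalty is $-\infty$ off the constraint set $\EE$), swap $\sup_b$ and $\inf_w$ by the minimax inequality, and then eliminate $b$ pointwise in $\omega$ via Legendre biconjugation, which produces $\hamil^\lambda(\nabla w,0,\omega)$; the resulting $\sup_f\inf_w\int(\tfrac12\Laplace w+\hamil^\lambda(\nabla w,0,\omega))f\,d\P$ equals $-\sigma(\lambda)$ directly from the definition of $\sigma$ via $u=e^w$. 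You instead work ``from below'': fix $(b,f)\in\EE$, feed the linear constraint $\E[b\nabla w\,f]=\tfrac12\E[\nabla w\nabla f]$ for $w\in\D_\subw^2$ (from \eqref{e:defofE_doesHoldForD} and \eqref{eq:partialintegration}) into a completed square, and compare to the representation \eqref{eq:formula_for_sigma} of $\sigma$. This works, and the ``something'' you left unresolved is: the square to drop is $\tfrac12\E[(b-\lambda-\tfrac{\nabla f}{2f}+\nabla w)^2 f]\geq 0$. Because $\tfrac12\ln f\in\D_\subw^2$ for $f\in\F_\subw^2$, the substitution $w\mapsto\tfrac12\ln f - w$ is a bijection of $\D_\subw^2$, the cross term with $b$ collapses via the constraint, and after \eqref{eq:partialintegration} the remainder is exactly $\E[|\nabla f|^2/(8f)]+\lambda^2/2-\tfrac12\E[|\lambda-\nabla w|^2 f]$; taking $\inf$ over $w$ and then over $f$ gives $\tfrac12\E[(b-\lambda)^2 f]+\E[Vf]\geq\sigma(\lambda)$, hence $\E[-\lagrange^\lambda f]\leq-\sigma(\lambda)$. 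Note you do not actually need the Weyl decomposition or an explicit identification of the projection of $b$: fixing one $w$, dropping one square, and then optimizing over $w$ already carries the projection argument. The trade-off between the two proofs: the paper's is softer, touching only the definition of $\sigma$ and never \eqref{eq:formula_for_sigma}, and the Hamiltonian appears naturally by convex conjugation; yours is more concrete, making transparent that $\tfrac12\Laplace f=\nabla(bf)$ pins down exactly the component of $b$ along gradients in $\langle\cdot,\cdot\rangle_f$, but it takes Proposition~\ref{prop:formulaforI} as input.
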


\begin{proof}
We estimate $\bar H^\lambda(0)$ similarly to \cite[(5.2)-(5.6)]{Kosygina2006}.
\begin{align*}
\sup_{(b,f)\in\EE}\E[-\lagrange^\lambda(b_\omega,0,\omega)f]
& \overset{(i)}{=} \sup_{f\in\F} \sup_{b\in\B} \inf_{w\in\D} \int \bigg( -\lagrange^\lambda(b_\omega,0,\omega) + \frac{1}{2}\Laplace w + b \nabla w \bigg)f d\P\\
& \overset{(ii)}{\leq} \sup_{f\in\F} \inf_{w\in\D} \int \sup_{b\in\B} \bigg\{\bigg( -\lagrange^\lambda(b_\omega,0,\omega) + \frac{1}{2}\Laplace w + b \nabla w \bigg)f\bigg\}d\P\\
& \overset{(iii)}{=} \sup_{f\in\F} \inf_{w\in\D} \int\bigg( \frac{1}{2}\Laplace w + \hamil^\lambda(\nabla w,0,\omega)\bigg)fd\P.
\end{align*}
$(i)$ is valid by \eqref{e:defofE_doesHoldForD}, in order to interchange $\sup_{b\in\B}$ with integration in $(ii)$ note that measurability of the $\sup$ over the integrand is guaranteed by the subsequent calculation of the integrand. In $(iii)$ we used the definition of $\lagrange^\lambda$ as the convex conjugate of $\hamil^\lambda$ and the fact, that the convex biconjugate of $\hamil^\lambda(\cdot,0,\omega)$ is again $\hamil^\lambda(\cdot,0,\omega)$, see \cite[Lemma 4.5.8]{Dembo98}. We continue inserting the definition of $\hamil^\lambda$.
\begin{align}
&= \sup_{f\in\F} \inf_{w\in\D} \int \bigg(\frac{1}{2}\Laplace w + \frac{1}{2} |\nabla w|^2 + \lambda \nabla w - V\bigg)fd\P \label{eq:sigma_intermsof_hamil}
= \sup_{f\in\F} \inf_{u\in\U} \int \left(\frac{Lu}{u} - V\right)fd\P,
\end{align}
since $e^\D=\U$, and $\Laplace e^w = (\nabla w)^2 e^w + (\Laplace w) e^w$, see \cite[(7.18) and Theorem 7.8]{Gilbarg83}.
\end{proof}

Therefore, \eqref{ass:Lambda=effectiveHamiltonian} is sufficient for condition (E1) to be valid. There is extensive research concerning homogenization. For example, \cite[Theorem 2.3]{Kosygina2006}, \cite[Theorem 3.1(i)]{Lions2005} or \cite[Theorem 1]{Armstrong2012} exhibit the desired convergence. In \cite{Kosygina2006} the effective Hamiltonian is given as required. \cite[Theorem 2.3]{Kosygina2006} applies if the potential is bounded, uniformly continuous uniformly for all $\omega \in \Omega$, and \cite[(3.1)]{Kosygina2006} needs to be valid. \cite[(3.1)]{Kosygina2006} is an optimal control problem and we refer to \cite[Paragraphs IV.3. and IV.4]{Fleming2006} for a discussion of solvability. \eqref{eq:HJB} is mainly a Feynman-Kac correspondence. Sufficient criterions for a Feynman-Kac correspondence are given in \cite[Paragraph 6.5]{Friedman1975}.

\subsection{Free Energy}\label{app:freeenergy}

In this subsection we deduce the identification \eqref{eq:identification} and the variational expression for the quenched free energy stated in Corollary \ref{cor:quenchedfreeenergy}. For $a:\R^d \to \R$ we introduce $\bar \RR: \R^d \to \R \cup \{ \pm \infty\}$,
\begin{align*}
\bar \RR(a)(y):= \sup \{y\lambda:\, \lambda \in \R^d,\, a(\lambda)-\lambda^2/2 \geq 0\}.
\end{align*}
Of course, $\RR(a) \leq \bar \RR(a)$. We calculate the inverse of $\RR$, see also \cite[(8.2)]{Armstrong2012}.

\begin{lemma}\label{lem:inverseofR}
Let $a:\R^d \to \R$. If $\lambda \mapsto a(\lambda)-\lambda^2/2$ is concave and $a(0)>0$, then $\RR(a) = \bar\RR(a)$, and for $c \leq a(0)$, for $\lambda \in \R^d$,
\begin{align}\label{eq:varform_for_sigma}
(a(\lambda)-\lambda^2/2) \wedge c = \sup\{-\mu:\, -\mu < c,\, \lambda y \leq \RR(a+\mu)(y)\, \forall y \in \R^d\}.
\end{align}
If $C:=\sup\{|\lambda|:\, a(\lambda)-\lambda^2/2 > 0\} < \infty$, then $\RR(a)$ is Lipschitz continuous with Lipschitz constant $C$.
\end{lemma}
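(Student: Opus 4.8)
The plan is to treat the three assertions in turn, exploiting the concavity of $M(\lambda):=a(\lambda)-\lambda^2/2$ throughout. First I would prove $\RR(a)=\bar\RR(a)$. The inequality $\RR(a)\le\bar\RR(a)$ is immediate from the definitions. For the reverse, fix $y$ and a $\lambda$ with $M(\lambda)\ge 0$; I want to produce $\lambda'$ arbitrarily close to $\lambda$ with $M(\lambda')>0$ and $\langle y,\lambda'\rangle$ close to $\langle y,\lambda\rangle$. Since $M(0)=a(0)>0$, for $t\in(0,1)$ concavity gives $M(t\cdot 0+(1-t)\lambda)\ge tM(0)+(1-t)M(\lambda)\ge tM(0)>0$, so $\lambda_t:=(1-t)\lambda$ satisfies $M(\lambda_t)>0$ and $\langle y,\lambda_t\rangle\to\langle y,\lambda\rangle$ as $t\downarrow 0$. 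Hence $\langle y,\lambda\rangle\le\RR(a)(y)$, and taking the supremum over admissible $\lambda$ yields $\bar\RR(a)(y)\le\RR(a)(y)$.

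Next I would establish the variational identity \eqref{eq:varform_for_sigma}. Fix $\lambda$ and $c\le a(0)$. For $\mu\in\R$ note that $\RR(a+\mu)(y)$ is the support function (in $y$) of the convex set $C_\mu:=\{\nu:\,M(\nu)+\mu>0\}$ when this set is nonempty, so the condition ``$\langle y,\lambda\rangle\le\RR(a+\mu)(y)$ for all $y$'' says exactly that $\lambda$ lies in the closed convex hull of $C_\mu$; because $M$ is concave and continuous, $\overline{C_\mu}=\{\nu:\,M(\nu)+\mu\ge 0\}$, so the condition is equivalent to $M(\lambda)+\mu\ge 0$, i.e. $-\mu\le M(\lambda)$. (One must check $C_\mu\neq\emptyset$: this holds iff $\sup_\nu M(\nu)+\mu>0$, and $-\mu<c\le a(0)=M(0)\le\sup M$ guarantees it.) Therefore the right-hand side of \eqref{eq:varform_for_sigma} equals $\sup\{-\mu:\,-\mu<c,\ -\mu\le M(\lambda)\}=\min\{M(\lambda),c\}$ — with the strict/non-strict discrepancy absorbed since the supremum of $\{-\mu:\,-\mu<c\}$ capped by $M(\lambda)$ is $\min\{M(\lambda),c\}$ whether or not $M(\lambda)\ge c$. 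This is precisely $(a(\lambda)-\lambda^2/2)\wedge c$.

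Finally, for Lipschitz continuity of $\RR(a)$ under $C:=\sup\{|\lambda|:\,M(\lambda)>0\}<\infty$: by the first part $\RR(a)(y)=\sup\{\langle y,\lambda\rangle:\,M(\lambda)>0\}$, a supremum of linear functionals $y\mapsto\langle y,\lambda\rangle$ each with $|\lambda|\le C$; a pointwise supremum of functions that are all $C$-Lipschitz is $C$-Lipschitz (and finite, since $\RR(a)(y)\le C|y|$), which gives the claim. The main obstacle I anticipate is the bookkeeping in the second part: correctly matching the strict inequality ``$M(\nu)+\mu>0$'' defining $C_\mu$ against the non-strict closure condition and against the constraint ``$-\mu<c$'', so that the final supremum comes out to exactly $\min\{M(\lambda),c\}$ rather than being off by a boundary case; this needs the concavity of $M$ together with $c\le a(0)$ to rule out degenerate or empty $C_\mu$.
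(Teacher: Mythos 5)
Your proof is correct, and the first and third parts follow the paper's argument almost verbatim (the paper reduces the first claim to rays via \eqref{eq:R=supsup} and uses concavity plus $M(0)>0$ exactly as you do; the Lipschitz part is the same computation phrased as a sup of $C$-Lipschitz linear maps versus the paper's two-sided estimate $\RR(a)(y)-C|z| \le \RR(a)(y+z) \le \RR(a)(y)+C|z|$).

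For the central identity \eqref{eq:varform_for_sigma} you and the paper both reduce it to showing, for $-\mu<c$, that $\{\lambda : M(\lambda)\ge -\mu\}$ coincides with $\{\lambda : \langle y,\lambda\rangle \le \RR(a+\mu)(y)\ \forall y\}$, but you reach this via a different (and cleaner) route. The paper proves the nontrivial inclusion by hand: assuming $M(\bar\lambda)<-\mu$, it finds the boundary point $t\bar\lambda$ of the convex level set $S_\mu=\{M\ge -\mu\}$, invokes the supporting-hyperplane theorem \cite[Corollary 11.6]{Rockafellar70} at that point to produce a normal vector $\bar y$, and derives a contradiction. You instead recognise $\RR(a+\mu)$ as the support function of the open convex superlevel set $C_\mu=\{M+\mu>0\}$ and invoke the support-function (bipolar) duality: the set $\{\lambda : \langle y,\lambda\rangle \le h_{C_\mu}(y)\ \forall y\}$ is the closed convex hull of $C_\mu$, which equals $\{M+\mu\ge 0\}$ because $M$ is concave, finite, and $C_\mu\ne\emptyset$ (guaranteed by $-\mu<c\le M(0)$). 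These two arguments are logically equivalent -- the bipolar theorem is itself proved via supporting hyperplanes -- but yours offloads the geometric work to a standard theorem, whereas the paper's is more self-contained. Your bookkeeping in the final step, checking that $\sup\{-\mu : -\mu<c,\ -\mu\le M(\lambda)\}=\min\{M(\lambda),c\}$ regardless of whether $M(\lambda)\ge c$, is the same observation the paper makes via the identity $(a(\lambda)-\lambda^2/2)\wedge c = \sup\{-\mu : -\mu<c,\ a(\lambda)-\lambda^2/2\ge -\mu\}$.
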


\begin{proof}
Let $M(\lambda):=a(\lambda)-\lambda^2/2$. In order to see the first statement use as in \eqref{eq:R=supsup} the fact that 
$\RR(a)(y) = \sup_{\eta \in S^{d-1}} \sup_{s \geq 0:\, M(s\eta)>0} sy\eta$. The hypotheses on $a$ show $\RR(a) = \bar\RR(a)$.
We have for $-\mu<c$,
\begin{align}\label{eq:subdifferential}
\{\lambda \in \R^d:\, M(\lambda) \geq -\mu\}
=\{\lambda \in \R^d:\, y \lambda \leq \RR(a+\mu)(y)\, \forall y \in \R^d\}.
\end{align}
In fact, $\subset$ follows directly, use $\RR(a) = \bar\RR(a)$. For the reverse, let $\bar \lambda$ such that $y \bar\lambda \leq \RR(a+\mu)(y)$ for $y \in \R^d$. Assume $M(\bar\lambda)<-\mu$. Since $M$ is continuous and $M(0)>-\mu$, there exists $0<t<1$ maximal such that $M(t\bar\lambda)=-\mu$. We have that the level set $S_\mu$ $:=$ $\{\lambda \in \R^d:$ $M(\lambda) \geq -\mu\}$ is convex, see \cite[Theorem 4.6]{Rockafellar70}. Moreover, $t\bar\lambda$ is in the boundary of $S_\mu$. Thus, there exists a supporting hyperplane $T$ of $S_\mu$ through $t\bar\lambda$, see \cite[Corollary 11.6]{Rockafellar70}. In particular, there is $\bar y \neq 0$, $\bar y \perp T$ and
\begin{align*}
\bar\RR(a+\mu)(\bar y)
=\sup\{\bar y \lambda:\, \lambda \in S_\mu\}
=\sup\{\bar y \lambda:\, \lambda \in T\}
= \bar y t \bar \lambda,
\end{align*}
which is in contrary to $\bar y \bar \lambda \leq \RR(a+\mu)(\bar y)$. This shows \eqref{eq:subdifferential}.
\eqref{eq:varform_for_sigma} follows from \eqref{eq:subdifferential}, and from the fact that $(a(\lambda)-\lambda^2/2)\wedge c
= \sup\{-\mu:\, -\mu<c,\, a(\lambda)-\lambda^2/2 \geq -\mu\}$.

Let $C<\infty$. Then $\RR(a)(y+z) \leq \RR(a)(y) + \RR(a)(z)\leq \RR(a)(y)+C|z|$. For the lower bound use $\langle y+z,\lambda \rangle \geq \langle y,\lambda \rangle - |z||\lambda|$, hence,
$\RR(a)(y+z) \geq \sup\{y\lambda - |z|C:\, \lambda \in \R^d,\,a(\lambda)-\lambda^2/2 > 0\} = \RR(a)(y)-C|z|$,
which shows continuity.
\end{proof}

For $\lambda \in \R^d$ let $\tilde \hamil^\lambda(\cdot)$ be the effective Hamiltonian for the homogenization problem \eqref{eq:HJB} given by \cite[Theorem 1]{Armstrong2012}, if applicable. That is, $\P$-a.s.\ $u_\epsilon \to u$ locally uniformly with $u$ solution to $\partial_t u = \tilde \hamil^\lambda(\nabla u)$, $u(0,\cdot)\equiv 0$.

\begin{proposition}\label{prop:identification}
\EDS{} Let $V$ be a \pott{}. Assume there exists $C>0$ such that for $\omega \in \Omega$, for $x_1$, $x_2 \in \R^d$, one has $|V_\omega(x_2)-V_\omega(x_1)|\leq C |x_2-x_1|$.
Then \eqref{eq:identification} is valid.
\end{proposition}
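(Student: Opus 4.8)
The plan is to combine Proposition~\ref{prop:logmomgenfuncupperbound} (which gives condition~(E1) and hence, via the lower bound of Subsection~\ref{subsec:lowerbound}, that $\RR(\sigma)\le\alpha_V$) and Theorem~\ref{t:varform} (which identifies $\alpha_V=\Gamma_V=\RR(\sigma)=\RR(-\Lambda_\omega)$) with the homogenization result of Armstrong--Souganidis \cite[Theorem 1]{Armstrong2012}, and then to relate the decay rate of $e(\cdot,\omega)$ to the effective Hamiltonian $\tilde\hamil^\lambda$. First I would check that the Lipschitz hypothesis on $V$ together with boundedness (from \pott{}) places us in the setting where \cite[Theorem 1]{Armstrong2012} applies, so that $\P$-a.s.\ $u_\epsilon\to u$ locally uniformly, with $u$ solving $\partial_t u=\tilde\hamil^\lambda(\nabla u)$, $u(0,\cdot)\equiv0$; evaluating at $x=0$, $t=1$ yields $\P$-a.s.\ $\Lambda_\omega(\lambda)=\tilde\hamil^\lambda(0)$, so in particular $\Lambda_\omega$ is deterministic. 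Comparing with Proposition~\ref{prop:logmomgenfuncupperbound} I get $\tilde\hamil^\lambda(0)=\bar\hamil^\lambda(0)\le-\sigma(\lambda)$, and combined with (E1) in fact $-\Lambda_\omega(\lambda)=\sigma(\lambda)$ on the relevant range — this is exactly the content feeding into Corollary~\ref{cor:quenchedfreeenergy}.

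The second ingredient is the variational/control-theoretic interpretation of the hitting-time functional $e(-ry,\omega)$. The quantity $-\tfrac1r\ln e(-ry,\omega)$ should, by a large-deviation / optimal-control argument (this is the content underlying \cite[Theorem 0.2]{Sznitman94} in the Poissonian case), be governed by the same effective Lagrangian $\bar\lagrange$ that is the convex conjugate (in the homogenized sense) of $\bar\hamil$. Concretely, I would invoke the results of \cite{Armstrong2012} identifying the exponential decay rate of $e$ with $\bar\RR(\tilde\hamil^{\,\cdot}(0))$, i.e.\ with $\RR(-\Lambda_\omega)$ after the identification $\tilde\hamil^\lambda(0)=\Lambda_\omega(\lambda)$; this is precisely where the functional $\RR$ enters, as signalled by the remark around \eqref{eq:identification}. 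Then \eqref{eq:identification} reduces to the chain of equalities
\begin{align*}
\lim_{r\to\infty}-\frac1r\ln e(-ry,\omega)
=\RR(-\Lambda_\omega)(y)
=\Gamma_V(y)
=\alpha_V(y)
=\lim_{r\to\infty}-\frac1r\ln g(0,ry,\omega),
\end{align*}
the middle two equalities being Theorem~\ref{t:varform}. The sign/direction conventions (why $e(-ry,\cdot)$ rather than $e(ry,\cdot)$) must be tracked carefully: it comes from the fact that $e$ records Brownian motion started at $-ry$ and hitting a neighbourhood of the origin, whereas $g$ records occupation near $ry$ started at the origin, and time-reversal of the drift sends $\lambda\mapsto-\lambda$, compatible with \eqref{eq:sigma_invariant_under_reflections}.

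The main obstacle I anticipate is the rigorous identification of the decay rate of $e(-ry,\omega)$ with $\RR(-\Lambda_\omega)(y)$: unlike the Green function, which we controlled directly via the Feynman--Kac formula \eqref{e:def_green} and subharmonicity, the hitting functional requires either (a) quoting the precise statement of \cite{Armstrong2012} that ties $e$ to the homogenized Hamilton--Jacobi problem, or (b) redoing an upper/lower bound argument in the spirit of Subsections~\ref{subsec:upperbound}--\ref{subsec:lowerbound} adapted to the stopped process at $H(0)$. I would pursue route (a), carefully verifying that the hypotheses of \cite{Armstrong2012}'s theorem (boundedness and Lipschitz regularity of $V$, stationarity and ergodicity of $\P$) are met and that their formula for the effective Hamiltonian coincides with $\bar\hamil^\lambda$; the continuity of $\RR(-\Lambda_\omega)$ needed to pass from a dense set of $y$ to all $y$ is already supplied by Lemma~\ref{lem:inverseofR} together with the bound $0\ge\Lambda_\omega(\lambda)\ge-\vmax$, so that part is routine.
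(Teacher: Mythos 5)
Your plan is correct and follows essentially the same route as the paper: apply \cite[Theorem 1]{Armstrong2012} under the Lipschitz and boundedness hypotheses to identify $\Lambda_\omega(\lambda)=\tilde\hamil^\lambda(0)$ via the Feynman--Kac correspondence (upgrading to all $\lambda$ simultaneously by a continuity argument), then combine Theorem~\ref{t:varform}, $\RR(-\Lambda_\omega)=\bar\RR(-\Lambda_\omega)$ from Lemma~\ref{lem:inverseofR}, and the Armstrong--Souganidis machinery for $e(\cdot,\omega)$. The one step you flagged as the ``main obstacle'' and deferred --- tying $e$ to the homogenized HJ problem --- is resolved in the paper exactly as in your route~(a): one uses that $-\ln e(y,\omega)$ solves the metric problem (\cite[Proposition 2.3.8]{Sznitman98}), then \cite[Lemma 6.7, Proposition 6.9, (8.1)]{Armstrong2012} to equate its growth rate with $\bar m(-y)=\bar\RR(-\tilde\hamil^{\,\cdot}(0))(y)$, which is where the minus sign you tracked enters.
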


\begin{proof}
The assumptions on the potential ensure that we can apply \cite[Theorem 1]{Armstrong2012}: As in \eqref{ass:Lambda=effectiveHamiltonian} with help of a Feynman-Kac correspondence \eqref{eq:HJB} we get for any $\lambda \in \R^d$ $\P$-a.s.\ $\Lambda_\omega(\lambda) = \tilde H^\lambda(0)$. Moreover, we have $\P$-a.s.\ for all $\lambda \in \R^d$,
\begin{align}\label{eq:Lambda=tildeH}
\Lambda_\omega(\lambda) = \tilde H^\lambda(0).
\end{align}
Indeed, recall that $\Lambda_\omega(\lambda)+\lambda^2/2$ is convex and continuous in $\lambda$. On the other hand, for $p,\lambda \in \R^d$ we have $\tilde \hamil^\lambda(p)+\lambda^2/2=\tilde \hamil^0(p+\lambda)$, use e.g.\ \cite[(5.35)]{Armstrong2012}. Thus, continuity of $\tilde H^\lambda(0) + \lambda^2/2= \tilde H^0(\lambda)$ follows from \cite[Proposition 5.5]{Armstrong2012}. This shows \eqref{eq:Lambda=tildeH}.

By assumptions we have $-\Lambda_\omega(0)\geq \sigma(0)>0$ $\P$-a.s.. Lemma \ref{lem:inverseofR} thus shows $\RR(-\Lambda_\omega) = \bar \RR(-\Lambda_\omega)$. Therefore, Theorem \ref{t:varform} and \cite[Lemma 6.7]{Armstrong2012} imply $\P$-a.s.\ for $y \in \R^d$,
\begin{align}\label{eq:Gamma=barm}
\Gamma_{V}(y)
= \bar \RR(-\Lambda_\omega)(y)
= \bar \RR(-\tilde \hamil^\cdot(0))(y) 
= \bar m(-y),
\end{align}
where $\bar m$ is the unique solution to the effective metric problem, see \cite[(6.24)]{Armstrong2012},
\begin{align*}
\tilde \hamil^0(-\nabla \bar m) = 0 \text{ in } \R^d\setminus\{0\},\
\bar m(0)=0, \ 
\liminf_{|y|\to \infty} |y|^{-1} \bar m(y)\geq 0.
\end{align*}
Since $- \ln e(y,\omega)$ solves the associated metric problem, use \cite[Proposition 2.3.8]{Sznitman98}, as in \cite[(8.1)]{Armstrong2012} with \cite[Proposition 6.9]{Armstrong2012} and \eqref{eq:Gamma=barm} the identification \eqref{eq:identification} follows.
\end{proof}

\begin{proof}[Proof of Corollary \ref{cor:quenchedfreeenergy}]
Let $\mu_0 > 0$, $\mu_0 \in \Q$. With Theorem \ref{t:varform} we have $\P$-a.s.\ for any $\mu > 0$, $\mu \in \Q$,
\begin{align}\label{eq:1:cor:quenchedfreeenergy}
\RR(\sigma + \mu)= \RR(-\Lambda_\omega+\mu).
\end{align}
Recall that $\lambda \mapsto \sigma(\lambda)-\lambda^2/2$ is concave, see \eqref{eq:M_concave}.  Let $c \leq \min\{-\Lambda_\omega(0)+\mu_0,\, \sigma(0)+\mu_0\}$ $\P$-a.s.. We use \eqref{eq:varform_for_sigma} and get for $\lambda \in \R^d$,
\begin{align*}
(\sigma(\lambda)+\mu_0 - \lambda^2/2)\wedge c
& = \sup\{-\mu \in \R:\, -\mu < c,\, \lambda y \leq \RR(\sigma+\mu_0+\mu)(y)\, \forall y \in \R^d\}.
\end{align*}
The function $\mu \mapsto  \RR(\sigma+\mu)(y)$ is monotone increasing, and we can reduce the supremum in the following to $\mu \in \Q$ and obtain by \eqref{eq:1:cor:quenchedfreeenergy} that the latter $\P$-a.s.\ for any $\lambda \in \R^d$ equals
\begin{align*}
\sup\{-\mu \in \Q:\, -\mu < c,\, \lambda y \leq \RR(-\Lambda_\omega+\mu_0+\mu)(y)\, \forall y \in \R^d\}
& = (-\Lambda_\omega(\lambda) + \mu_0-\lambda^2/2)\wedge c.
\end{align*}
The last equality is obtained by \eqref{eq:varform_for_sigma} and by convexity of $\lambda \mapsto \Lambda_\omega(\lambda)+\lambda^2/2$.
\end{proof}

\subsection{Denseness} \label{app:derivative}

In order to prove the denseness results stated in Section \ref{sec:preliminaries} we use the notion of convolution, see \cite[(7.19)]{Jikov1994}: For $f \in L^1$ the convolution of $f$ with $\g \in C_\compactc^\infty$ is defined as
$f \ast \g (\omega) := \int f_\omega(x) \g(x)dx$, where $\omega$ is in a set of full $\P$-measure such that $f_\omega \in L^1_\loc$. For an even function $\g \in C_\compactc^\infty$ such that $\int \g(x) dx = 1$, $\g \geq 0$, define for $x \in \R^d$ and $\epsilon>0$ the function $\g_\epsilon(x):= \epsilon^{-d} \g(x/\epsilon)$. In the following, whenever we write $f_\epsilon$ we mean $f\ast\g_\epsilon$ where $\g$ has the prescribed properties. If $\phi \in (L^2)^d$ we also define $\phi \ast \g := (\phi_i \ast \g)_i$.
The following properties of the convolution are needed, see e.g.\ \cite[Chapter 7]{Jikov1994}:

\begin{lemma}\label{lem:convproperties}
Let $f \in L^1$ and $\g \in C_\compactc^\infty$. Then $f \ast \g$ is differentiable of all orders. Moreover,
\begin{align}
\label{eq:youngs_inequality}
&\|f \ast \g\|_p \leq \|f\|_p\|\g\|_1, &\text{ for } f \in L^p,\ p \geq 1,\\ 
\label{eq:derivativecommuteswithconv}
&\partial_i(f \ast \g)  = (\partial_i f) \ast \g, &\text{ for } f \in \DD(\partial_i),\\
\label{eq:derivativeofconv_bdd}
& \sup{_\Omega}|D^n(f\ast \g)| <\infty,\ n\in \N_0, &\text{ if $\sup{_\Omega} |f|<\infty$},\\
\label{eq:convolutionconverges}
&\|f-f_\epsilon\|_2 \to 0 \text{ as $\epsilon \searrow 0$}, &\text{ for $f \in L^2$.}
\end{align}
\end{lemma}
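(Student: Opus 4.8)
The plan is to reduce every assertion to the corresponding statement for ordinary convolution on $\R^d$, via the elementary identity relating the realization of $f\ast\g$ to a Euclidean convolution of the realization of $f$. First recall that, by Tonelli's theorem and $\tau_x$-invariance, $\int_\Omega\int_K|f_\omega(x)|\,dx\,d\P(\omega)=|K|\,\|f\|_1<\infty$ for every compact $K\subset\R^d$, so letting $K$ run over balls of integer radius shows $f_\omega\in L^1_\loc(\R^d)$ for $\P$-a.e.\ $\omega$; thus $f\ast\g$ is well defined. For such $\omega$ and $y\in\R^d$, using that $\tau$ is a homomorphism and substituting $z=x+y$,
\[
(f\ast\g)_\omega(y)=\int f(\tau_{x+y}\omega)\g(x)\,dx=\int f_\omega(z)\g(z-y)\,dz,
\]
which, since $\g$ is even, is exactly the Euclidean convolution $(f_\omega\ast^{\euclid}\g)(y)$. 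Differentiating under the integral (legitimate because $\g\in C_\compactc^\infty$ and $f_\omega\in L^1_\loc$), and more generally replacing $\g$ by an arbitrary $h\in C_\compactc^\infty$, the same computation yields $D^\nu(f\ast h)=(-1)^{|\nu|}\,f\ast(D^\nu h)$ for every multi-index $\nu$, with the right-hand side measurable on $\Omega$ by product measurability of $(x,\omega)\mapsto\tau_x\omega$. In particular $(f\ast\g)_\omega=f_\omega\ast^{\euclid}\g\in C^\infty(\R^d)$ for $\P$-a.e.\ $\omega$, which is the claimed infinite differentiability.

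Young's inequality \eqref{eq:youngs_inequality} I would then prove directly: if $\|\g\|_1=0$ it is trivial, and otherwise $|\g|\,\leb/\|\g\|_1$ is a probability measure, so Jensen's inequality for $t\mapsto|t|^p$ gives $|f\ast\g(\omega)|^p\le\|\g\|_1^{p-1}\int|f_\omega(x)|^p|\g(x)|\,dx$ $\P$-a.e.; integrating over $\Omega$ and using Tonelli and $\tau_x$-invariance yields $\|f\ast\g\|_p\le\|f\|_p\|\g\|_1$ (for $p=\infty$ use instead that $|f_\omega(x)|\le\|f\|_\infty$ for a.e.\ $(x,\omega)$). Next, \eqref{eq:derivativecommuteswithconv} follows by integration by parts in the Euclidean realization: if $f\in\DD(\partial_i)$ then $f_\omega$ has Euclidean weak $i$-th derivative $(\partial_i f)_\omega$ for $\P$-a.e.\ $\omega$, so in the representation $D_i(f\ast\g)=-f\ast(\partial_i\g)$ from the first paragraph one may move the derivative onto $f_\omega$, obtaining $D_i(f\ast\g)(\omega)=\int(\partial_i f)_\omega(z)\g(z)\,dz=(\partial_i f)\ast\g(\omega)$; this lies in $L^2$ by \eqref{eq:youngs_inequality}, hence it is also the weak derivative on $\Omega$. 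Estimate \eqref{eq:derivativeofconv_bdd} is then immediate: if $\sup_\Omega|f|=M<\infty$, the representation $D^\nu(f\ast\g)=(-1)^{|\nu|}f\ast(D^\nu\g)$ and the $p=\infty$ case of Young give $\sup_\Omega|D^\nu(f\ast\g)|\le M\,\|D^\nu\g\|_1<\infty$ for every $\nu$ with $|\nu|=n$.

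Finally, for \eqref{eq:convolutionconverges} let $f\in L^2$ and write $S_x$ for the shift $g\mapsto g\circ\tau_x$ on $L^2$. Since the one-parameter coordinate groups $S^i$ are strongly continuous, unitary, and commute, $\|S_xf-f\|_2\le\sum_{i=1}^d\|S^i_{x_i}f-f\|_2\to0$ as $x\to0$, so $x\mapsto S_xf-f$ is $L^2$-continuous and vanishes at the origin. Because $\int\g_\epsilon=1$ we have $f_\epsilon-f=\int(S_xf-f)\g_\epsilon(x)\,dx$, and Minkowski's integral inequality gives $\|f_\epsilon-f\|_2\le\int\|S_xf-f\|_2\,\g_\epsilon(x)\,dx$; since $\g$ has compact support, $\g_\epsilon$ is carried by an arbitrarily small ball about $0$ as $\epsilon\searrow0$, so by continuity the right-hand side tends to $0$. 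There is no genuine obstacle in any of this once the realization identity of the first paragraph is in place; the only points that require care are the measure-theoretic bookkeeping (a.e.\ $L^1_\loc$ realizations, and assembling the exceptional null sets over the countable families involved) and, for \eqref{eq:convolutionconverges}, passing from the one-parameter strong continuity recalled in Section~\ref{sec:preliminaries} to strong continuity of the full $\R^d$-action on $L^2$ — I expect that last bridge to be the most delicate, though it is elementary thanks to unitarity.
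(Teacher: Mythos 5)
Your proof is correct and supplies exactly the standard material that the paper defers to \cite[Chapter 7]{Jikov1994}: the realization identity $(f\ast\g)_\omega(y)=\int f_\omega(z)\g(z-y)\,dz$, differentiation under the integral giving $D^\nu(f\ast h)=(-1)^{|\nu|}f\ast(D^\nu h)$, Young's inequality via Jensen and $\tau_x$-invariance, passing the Euclidean weak derivative through the pairing with the compactly supported test function, and the mollifier limit via strong continuity of the $\R^d$-action. The step you flag as the most delicate, upgrading the strong continuity of the one-parameter groups $S^i$ to strong continuity of $x\mapsto S_x$ on $L^2$, is indeed handled correctly by the telescoping decomposition $S_xf-f=\sum_i S^1_{x_1}\cdots S^{i-1}_{x_{i-1}}(S^i_{x_i}f-f)$ together with unitarity. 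Two minor remarks. First, the lemma does not assume $\g$ even; without evenness the realization is $f_\omega\ast^{\euclid}\check\g$ with $\check\g(x):=\g(-x)$, but nothing in your chain of reasoning actually uses evenness, so this is a cosmetic adjustment (evenness is only built into the paper's definition of $\g_\epsilon$, which is all that \eqref{eq:convolutionconverges} needs). Second, for the claim that $(\partial_if)\ast\g$ is the weak derivative of $f\ast\g$ on $\Omega$, you correctly observe that the realizations of $f\ast\g$ are smooth with classical $i$-th derivative equal to $((\partial_if)\ast\g)_\omega$; since that function is product-measurable in $(x,\omega)$, Fubini gives measurability on $\Omega$, and Young gives $L^2$-membership, so all the requirements in the paper's definition of $\DD(\partial_i)$ are met. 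No gaps.
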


The space $\D_\subs$ serves as space of test functions:

\begin{lemma}\label{lem:testfunctions}
$\D_\subs$ is dense in $L^2$.
\end{lemma}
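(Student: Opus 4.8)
The goal is to show that $\D_\subs$ is dense in $L^2$. The obvious strategy is to approximate an arbitrary $f \in L^2$ in two stages: first reduce to bounded functions, then mollify by convolution to gain smoothness while staying in $L^2$. Concretely, given $f \in L^2$ and $\delta > 0$, first pick $N$ large enough that the truncation $f^N := (f \wedge N) \vee (-N)$ satisfies $\|f - f^N\|_2 < \delta/2$ (dominated convergence, since $|f - f^N| \le |f|$ and $f^N \to f$ pointwise as $N \to \infty$). The function $f^N$ is then bounded, hence in $L^1 \cap L^\infty$, and in particular $f^N \in L^2$.

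Next apply the convolution machinery of Lemma \ref{lem:convproperties}. For $\epsilon > 0$ set $(f^N)_\epsilon := f^N \ast \g_\epsilon$. By \eqref{eq:convolutionconverges} we have $\|f^N - (f^N)_\epsilon\|_2 \to 0$ as $\epsilon \searrow 0$, so we may fix $\epsilon$ with $\|f^N - (f^N)_\epsilon\|_2 < \delta/2$, giving $\|f - (f^N)_\epsilon\|_2 < \delta$. It remains to check $(f^N)_\epsilon \in \D_\subs$. By the first part of Lemma \ref{lem:convproperties}, $(f^N)_\epsilon$ is differentiable of all orders, so its realisations lie in $C^\infty(\R^d)$ for $\P$-a.e.\ $\omega$ (and one can arrange this for all $\omega$ by modifying on a null set, or note $f_\omega^N \in L^1_\loc$ for all $\omega$ by the standing non-negligibility/integrability assumptions); since $\sup_\Omega |f^N| \le N < \infty$, estimate \eqref{eq:derivativeofconv_bdd} gives $\sup_\Omega |D^n (f^N)_\epsilon| < \infty$ for all $n \in \N_0$. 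Finally $(f^N)_\epsilon \in L^2 \subset \D_\subw$: indeed $(f^N)_\epsilon \in L^2$ by \eqref{eq:youngs_inequality}, and $\partial_i (f^N)_\epsilon = f^N \ast \partial_i \g_\epsilon$ which is again bounded by \eqref{eq:youngs_inequality} applied to the bounded function $f^N$, hence in $L^2$; so $(f^N)_\epsilon \in \bigcap_i \DD(\partial_i) = \D_\subw$. Thus $(f^N)_\epsilon \in \D_\subs$, completing the argument.

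The only mildly delicate point is the bookkeeping needed to ensure the convolution genuinely lands in $\D_\subs$ for \emph{every} $\omega \in \Omega$ rather than $\P$-a.e., since $\D_\subs$ is defined with a pointwise-in-$\omega$ smoothness requirement; this is handled by observing that $f^N$ is bounded everywhere so $f_\omega^N \in L^1_\loc(\R^d)$ for all $\omega$, making the convolution well-defined and smooth for all $\omega$, and the uniform derivative bounds then follow from \eqref{eq:derivativeofconv_bdd}. Everything else is routine truncation plus the standard properties of mollification already collected in Lemma \ref{lem:convproperties}.
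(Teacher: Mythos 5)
Your proof is correct and follows essentially the same two-step route as the paper: truncate to a bounded function, then mollify using the properties collected in Lemma \ref{lem:convproperties}, the only cosmetic difference being that the paper truncates by $\phi 1_{|\phi|\leq M}$ while you clamp to $[-N,N]$. The extra remarks about verifying membership in $\D_\subs$ for every $\omega$ (not just $\P$-a.e.) and about $\partial_i(f^N)_\epsilon = f^N \ast \partial_i\g_\epsilon$ lying in $L^2$ are appropriate and, if anything, a little more careful than the paper's brief treatment.
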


\begin{proof}
Let $\phi \in L^2$. For $M>0$ set $\phi_M := \phi1_{|\phi|\leq M}$. $\sup_\Omega|\phi_M| < \infty$ and $\phi_M \to \phi$ as $M \to \infty$ with respect to $\|\cdot\|_2$. Let $\delta >0$ and choose $M$ such that $\|\phi_M - \phi\|_2 < \delta/2$. \eqref{eq:convolutionconverges} allows to choose $\epsilon >0$ such that $\|(\phi_M)_\epsilon - \phi_M\|_2 < \delta/2$. Hence, $\|(\phi_M)_\epsilon - \phi\|_2<\delta$ and $(\phi_M)_\epsilon \in \D_\subs$, see \eqref{eq:derivativeofconv_bdd}.
\end{proof}

A vector field $\psi \in (L^2)^d$ is called divergence-free or solenoidal, if $\div\psi = 0$ $\P$-a.s., where for those $\omega$ for which $\psi_\omega \in L^1_\loc$ one defines $(\div\psi)_\omega: C_\compactc^\infty \to \R$,
\begin{align*}
(\div\psi)_\omega(\varphi) := -\sum_i \int \psi_{\omega,i} \partial_i\varphi dx.
\end{align*}
A vector field $\psi \in (L^2)^d$ is called rotation-free or potential, if $\rot\psi = 0$ $\P$-a.s., where for those $\omega$ for which $\psi_\omega \in L_\loc^1$ one defines $(\rot\psi)_\omega: C_\compactc^\infty \to \R^{\binom{d}{2}}$, and for $i<j$,
\begin{align*}
\left((\rot\psi)_\omega(\varphi)\right)_{i,j}
:= \int \psi_{\omega,i} \partial_j\varphi - \psi_{\omega,j} \partial_i\varphi dx.
\end{align*}
We recall the orthogonal decomposition of $(L^2)^d$ into potential and solenoidal vector fields, called Weyl's decomposition, see \cite[Lemma 7.3]{Jikov1994}:
\begin{align}\label{eq:weyl}
(L^2)^d=\VV_\pot \oplus \VV_\sol \oplus \R^d,
\end{align}
where $\VV_\pot = \{\phi \in (L^2)^d: \rot\phi = 0,\, \E \phi = 0\}$ and $\VV_\sol  = \{\phi \in (L^2)^d: \div\phi = 0,\, \E \phi = 0\}$.

The following lemma lifts the notion of `weak divergence' to $(L^2)^d$:

\begin{lemma}\label{lem:weakdivergence}
Assume $\phi \in (L^2)^d$ and $h \in L^2$. Then $\P$-a.s.\ for any $\varphi \in C_\compactc^\infty$,
\begin{align}\label{eq:weakdivergence:1}
-\int \phi_\omega \nabla \varphi dx = \int h_\omega \varphi dx,
\end{align}
if and only if for any $w \in \D_\subw$,
\begin{align}\label{eq:weakdivergence:2}
- \langle \phi , \nabla w \rangle = \langle h , w \rangle.
\end{align}
The second statement is equivalent to \eqref{eq:weakdivergence:2} forced to hold for some space $\D \subset \D_\subw$ which is invariant under $\tau_x$, $x\in\R^d$, and dense in $L^2$ with respect to $\|\cdot\|_2$.
\end{lemma}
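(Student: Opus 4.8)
The plan is to reduce the three asserted equivalences to two nontrivial implications: first, that \eqref{eq:weakdivergence:1} implies \eqref{eq:weakdivergence:2} for every $w\in\D_\subw$; and second, that if \eqref{eq:weakdivergence:2} holds for every $w$ in one $\tau$-invariant subspace $\D\subset\D_\subw$ which is dense in $L^2$ with respect to $\|\cdot\|_2$, then \eqref{eq:weakdivergence:1} holds. The remaining implications are trivial restrictions, and since $\D_\subs$ is $\tau$-invariant and dense in $L^2$ by Lemma \ref{lem:testfunctions}, this closes the loop.

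For the first implication I would mollify on $\Omega$. Given $w\in\D_\subw$, Lemma \ref{lem:convproperties} gives that $w_\epsilon:=w\ast\g_\epsilon$ has smooth realisations (namely the Euclidean mollifications of $w_\omega$, using that $\g$ is even), that $w_\epsilon\in\D_\subw$, and that $w_\epsilon\to w$ and $\nabla w_\epsilon\to\nabla w$ in $L^2$ as $\epsilon\searrow0$, by \eqref{eq:youngs_inequality}, \eqref{eq:derivativecommuteswithconv} and \eqref{eq:convolutionconverges}. Choose cut-offs $\chi_R\in C_\compactc^\infty$ with $\chi_R\equiv1$ on $B(0,R)$, $\supp\chi_R\subset B(0,R+1)$ and $\sup_R\|\nabla\chi_R\|_\infty<\infty$. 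For $\omega$ outside the $\P$-null set on which \eqref{eq:weakdivergence:1} fails for some $\varphi$ (and off the null set where $w_\omega\notin L^1_\loc$), the function $(w_\epsilon)_\omega\chi_R$ belongs to $C_\compactc^\infty$ and may be used as test function in \eqref{eq:weakdivergence:1}; the product rule then yields, $\P$-a.s.,
\begin{align*}
\int\chi_R\big(\phi_\omega\cdot\nabla(w_\epsilon)_\omega+h_\omega(w_\epsilon)_\omega\big)\,dx
&=-\int(w_\epsilon)_\omega\,\phi_\omega\cdot\nabla\chi_R\,dx,
\end{align*}
all integrals being finite since $\P$-a.s.\ $\phi_\omega,h_\omega\in L^2_\loc$. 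I would divide by $|B(0,R)|$ and take $\E$: the essential observation is that expectation and spatial averaging commute by pure stationarity (Fubini), so the bulk part of the left-hand side equals $\E[\phi\cdot\nabla w_\epsilon+hw_\epsilon]$ for every $R$, whereas the contribution of the annulus $B(0,R+1)\setminus B(0,R)$ on the left and the whole right-hand side are dominated by a constant times $|B(0,R+1)\setminus B(0,R)|/|B(0,R)|\to0$. Hence $\E[\phi\cdot\nabla w_\epsilon+hw_\epsilon]=0$, and letting $\epsilon\searrow0$ gives \eqref{eq:weakdivergence:2} for $w$. No ergodic theorem is needed here.

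For the second implication, fix such a $\D$, let $g\in\D$ and $\varphi\in C_\compactc^\infty$, and consider $w:=g\ast\varphi$, which is the $L^2$-valued integral $\int\varphi(x)\,(g\circ\tau_x)\,dx$. Since $\D$ is a $\tau$-invariant linear subspace and the shift group is strongly continuous on $L^2$ --- also on the partial derivatives, because $\partial_i(g\circ\tau_x)=(\partial_i g)\circ\tau_x$ --- the Riemann sums approximating this integral lie in $\D$ and converge to $w$ in $\|\cdot\|_\nabla$, so \eqref{eq:weakdivergence:2} on $\D$ passes to the limit and holds for $w=g\ast\varphi$. A Fubini/stationarity computation then identifies $-\langle\phi,\nabla(g\ast\varphi)\rangle-\langle h,g\ast\varphi\rangle$ with $\E[g\,F_\varphi]$, where $F_\varphi(\omega):=\int\big(\phi_\omega\cdot\nabla\check\varphi+h_\omega\check\varphi\big)\,dx$ and $\check\varphi(x):=\varphi(-x)$; by the Cauchy--Schwarz inequality and compactness of $\supp\varphi$ one checks $F_\varphi\in L^2$. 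Thus \eqref{eq:weakdivergence:2} on $\D$ forces $\E[g\,F_\varphi]=0$ for all $g\in\D$, and by density of $\D$ in $L^2$ one concludes $F_\varphi=0$ $\P$-a.s.; letting $\varphi$ run through a countable family dense in $C_\compactc^\infty$ (fixed compact support, $C^1$-convergence) and invoking $\phi_\omega,h_\omega\in L^2_\loc$ to pass to all $\varphi$, this is precisely \eqref{eq:weakdivergence:1}.

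I expect the main obstacle to be the passage $R\to\infty$ in the first implication: one must realise that only stationarity, not ergodicity, is required, and control the cut-off gradient term and the annular remainder; a secondary point is the bookkeeping of exceptional sets --- ensuring \eqref{eq:weakdivergence:1} is read with a single $\omega$-null set valid simultaneously for all $\varphi$, and that the relevant realisations are locally $L^2$ (for $\phi,h$) or smooth (for $w_\epsilon$), so that every integral over $\R^d$ above is well defined.
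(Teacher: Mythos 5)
Your argument is correct but takes a genuinely different route in the forward implication. The paper regularises $\phi$ and $h$: it tests \eqref{eq:weakdivergence:1} against the translates $\varphi(\cdot - y)$, uses Fubini and the $C_\compactc^\infty$--$L^2_\loc$ duality to obtain the stronger pointwise identity $\nabla\cdot\phi_\epsilon = h_\epsilon$ on $\Omega$, and then integrates by parts on $\Omega$ and lets $\epsilon\to0$. You regularise the test function $w$ instead, plug the cut-off realisation $(w_\epsilon)_\omega\chi_R$ directly into \eqref{eq:weakdivergence:1}, and exploit stationarity to replace the $B(0,R)$-average by a single expectation, letting $R\to\infty$ and then $\epsilon\searrow0$; this bypasses the $L^2_\loc$-duality step and the intermediate pointwise identity (which the paper keeps and re-uses), at the cost of the cut-off bookkeeping, and makes explicit that only stationarity, not ergodicity, is needed. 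For the converse the two arguments are essentially the same (Fubini, stationarity, density of $\D$ in $L^2$), with one small wrinkle: your Riemann-sum approximation of $g\ast\varphi$ in $\|\cdot\|_\nabla$ tacitly uses that $\D$ is closed under finite linear combinations; the paper avoids this by applying \eqref{eq:weakdivergence:2} to each translate $w\circ\tau_{-x}$ separately and integrating against $\g_\epsilon(x)\,dx$, which needs only $\tau$-invariance of $\D$. This is immaterial for the spaces the lemma is actually used with ($\D_\subs$ and $\D_\subw^2$), which are vector subspaces.
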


As a consequence of Lemma \ref{lem:weakdivergence}, choosing $h \equiv 0$, we have
\begin{align}\label{eq:phi_characterization}
\Phi_y^\supw = y + \VV_\sol.
\end{align}

\begin{proof}
Assume \eqref{eq:weakdivergence:1} and consider $\phi_\epsilon$ and $h_\epsilon$. Then $\P$-a.s.\ for $\epsilon>0$, for any $\varphi \in C_\compactc^\infty$,
\begin{align}\label{eq:regularized_nablaphi=h}
\int \nabla \phi_{\epsilon,\omega} \varphi dx
= \int h_{\epsilon,\omega} \varphi dx.
\end{align}
In fact, $\P$-a.s.\ for $\epsilon > 0$, for $\varphi \in C_\compactc^\infty$ by Fubini's theorem
\begin{align*}
- \int \phi_{\epsilon,\omega}(x) \nabla \varphi(x) dx
&= - \sum_i \int \int \phi_{i,\omega}(x) \partial_i \varphi(x-y) dx \, \g_\epsilon(y) dy
\end{align*}
which by \eqref{eq:weakdivergence:1} and since $\varphi(\cdot-y) \in C_\compactc^\infty$ equals
\begin{align*}
& = \int \int h_{\omega}(x) \varphi(x-y) dx \, \g_\epsilon(y) dy
= \int h_{\epsilon,\omega}(x) \varphi(x) dx.
\end{align*}
An elementary denseness argument shows that \eqref{eq:regularized_nablaphi=h} implies
$\P$-a.s.\ for $\epsilon>0$ $\leb$-a.e.\ $\nabla \phi_{\epsilon,\omega} = h_{\epsilon,\omega}$. In fact, $C_\compactc^\infty$ and $L^2_\loc$ are in duality, see \cite[Paragraph IV.1]{Schaefer1974}. Therefore, we have for $\epsilon>0$ $\P$-a.s.\ $\nabla \phi_\epsilon = h_\epsilon$. Since $\phi_{i,\epsilon} \to \phi_i$, $h_\epsilon \to h$ in $L^2$, we get for all $w \in \D_\subw$ by integration by parts, see \eqref{eq:partialintegration},
\begin{align*}
- \E[\phi \nabla w] 
& = \lim_{\epsilon \to 0} - \E[\phi_\epsilon \nabla w] 
= \lim_{\epsilon \to 0} \E[\nabla \phi_\epsilon w] 
= \lim_{\epsilon \to 0} \E[h_\epsilon w] 
= \E[hw],
\end{align*}
which was to show.

The proof of the reverse direction is analogous: Let $\epsilon>0$ and $\phi$ satisfy \eqref{eq:weakdivergence:2}. For $w \in \D$ it is
$\langle \nabla \phi_\epsilon , w \rangle = \langle h_\epsilon , w \rangle$.
Indeed, using \eqref{eq:partialintegration}, Fubini's theorem, invariance of $\P$, the fact that $w \circ \tau_x \in \D$, and that $(\partial_i w) \circ \tau_x = \partial_i(w \circ \tau_x)$ for any $x \in \R^d$,
\begin{align*}
\langle \nabla \phi_\epsilon , w \rangle
&= -\sum_i \int \E \left[\phi_i \partial_i ( w \circ \tau_{-x})\right] \g_\epsilon(x) dx
= \int \E \left[h(w \circ \tau_{-x})\right] \g_\epsilon(x)dx
= \langle h_\epsilon, w \rangle.
\end{align*}
$\D$ is dense in $L^2$, hence, for all $\epsilon >0$ $\P$-a.s.\ $\nabla \phi_\epsilon = h_\epsilon$.
$\P$-a.s.\ $\phi_{\epsilon,\omega,i} \to \phi_{\omega,i}$ and $h_{\epsilon,\omega} \to h_\omega$ in $L^2_\loc$. This shows along a sequence $(\epsilon_n)_n \subset [0,\infty)$ converging to zero, $\P$-a.s.\ for $\varphi \in C_\compactc^\infty$,
\begin{align*}
- \int \phi_{\omega} \nabla \varphi dx
& = -  \lim_{n} \int \phi_{\epsilon_n,\omega} \nabla \varphi dx
= \lim_{n} \int \nabla \phi_{\epsilon_n,\omega} \varphi dx
= \lim_{n} \int h_{\epsilon_n,\omega} \varphi dx
=\int h_{\omega} \varphi dx,
\end{align*}
which shows the statement.
\end{proof}

\begin{lemma}\label{lem:D_dense_in_V_sol}
Let $\D$ be a vector subspace of $L^2$ of functions which are differentiable of all orders, assume $\D$ is dense in $L^2$ such that $\partial_i \D \subset \D$ for any $i$ and $\tau_x \D \subset \D$ for any $x \in \R^d$.
Then $\D^d \cap \VV_\sol$ is dense in $\VV_\sol$. 
\end{lemma}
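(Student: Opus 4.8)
The plan is to smooth elements of $\VV_\sol$ by convolution and then project back onto $\VV_\sol$, checking that the projection lands in $\D^d$ and that the error is small. Take $\psi \in \VV_\sol$; I want to approximate it by something in $\D^d \cap \VV_\sol$. The natural candidate is $\psi_\epsilon = \psi \ast \g_\epsilon$. By Lemma~\ref{lem:convproperties}, $\psi_\epsilon$ has components that are differentiable of all orders and $\psi_\epsilon \to \psi$ in $(L^2)^d$ as $\epsilon \searrow 0$; moreover $\E\psi_\epsilon = \E\psi = 0$. The remaining issue is that a priori $\psi_{\epsilon,i} \in C^\infty$-realisations but need not lie in the prescribed space $\D$, and that $\psi_\epsilon$ need not be exactly divergence-free in the $(L^2)^d$ sense. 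The first point is handled by the density hypothesis on $\D$ together with closure under $\partial_i$ and under the shifts $\tau_x$: writing the convolution as an average over shifts, $\psi_\epsilon = \int (\psi \circ \tau_{-x}) \g_\epsilon(x)\,dx$, one approximates each $\psi \circ \tau_{-x}$ in $L^2$ by elements of $\D$ and passes the approximation through the (finite-measure, bounded-kernel) integral; since $\D$ is a closed... rather, since $\D$ is dense and the integral is a norm-convergent limit of convex combinations of shifted elements, a diagonal argument produces an approximation of $\psi_\epsilon$ by $\D^d$-vectors.

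For the divergence: since $\psi \in \VV_\sol$ means $\div \psi = 0$ in the sense that $-\E[\psi \nabla w] = 0$ for all $w$ in a dense test space, Lemma~\ref{lem:weakdivergence} (with $h \equiv 0$) gives $\P$-a.s.\ $\div \psi_\omega = 0$ in the distributional sense on $\R^d$, and then $\div \psi_{\epsilon,\omega} = (\div \psi_\omega) \ast \g_\epsilon = 0$ pointwise, so in fact $\psi_\epsilon$ is genuinely (classically) divergence-free for each $\omega$. Hence $\psi_\epsilon \in \VV_\sol$ already. The only genuine work is therefore: starting from a $\D^d$-approximation $\chi$ of $\psi_\epsilon$ with $\|\chi - \psi_\epsilon\|_2$ small, apply the Weyl projection \eqref{eq:weyl} onto $\VV_\sol$ to get $\Pi_\sol \chi \in \VV_\sol$ with $\|\Pi_\sol \chi - \psi_\epsilon\|_2 \le \|\chi - \psi_\epsilon\|_2$ (orthogonal projections are contractions and $\psi_\epsilon$ is already in $\VV_\sol$), and check that $\Pi_\sol \chi$ is still smooth and still in $\D^d$. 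This last check is where one must be a little careful: $\Pi_\sol \chi = \chi - \Pi_\pot \chi - \E\chi$, and one needs that the potential part $\Pi_\pot \chi$ of a smooth $\D$-vector field is again in $\D^d$.

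Concretely, the potential component is obtained by solving $\Pi_\pot \chi = \nabla u$ for a scalar $u$; the structure of $\D$ (vector space, closed under all $\partial_i$ and under the shifts, dense in $L^2$) is exactly what is needed to run the Weyl decomposition within $\D$, so that $\Pi_\pot \chi \in \nabla \D \subset \D^d$ and hence $\Pi_\sol \chi = \chi - \Pi_\pot \chi - \E\chi \in \D^d$; since it is also divergence-free it lies in $\D^d \cap \VV_\sol$. Combining the two approximation steps, for any $\delta > 0$ pick $\epsilon$ so that $\|\psi - \psi_\epsilon\|_2 < \delta/2$, then $\chi \in \D^d$ with $\|\chi - \psi_\epsilon\|_2 < \delta/2$, and $\Pi_\sol \chi \in \D^d \cap \VV_\sol$ satisfies $\|\Pi_\sol \chi - \psi\|_2 < \delta$. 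The main obstacle I anticipate is the bookkeeping that the Weyl projections respect $\D$, i.e.\ that the smooth structure and shift-invariance of $\D$ really do carry the potential/solenoidal splitting; this is a soft argument but needs the hypotheses on $\D$ used in full, and is the reason those hypotheses are stated the way they are.
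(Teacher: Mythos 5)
Your overall strategy---smooth, approximate in $\D^d$, project back onto $\VV_\sol$, and argue that the projection stays in $\D^d$---hinges entirely on the last step, and that step is precisely where the real content of the lemma lies; you flag it as ``the main obstacle'' and then dismiss it as ``a soft argument,'' but it is not. The Weyl projection $\Pi_\pot$ is defined abstractly as $L^2$-orthogonal projection. The space $\VV_\pot$ is defined as $\{\phi \in (L^2)^d : \rot\phi = 0,\ \E\phi = 0\}$; it is the $L^2$-\emph{closure} of $\{\nabla w : w \in \D\}$, and a generic element of it need not be a gradient $\nabla u$ of any function $u$, let alone a $u \in \D$. To write $\Pi_\pot\chi = \nabla u$ with $u\in\D$ you would need to solve a Poisson-type equation $\Delta u = \div\chi$ on $\Omega$ \emph{inside} $\D$. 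On the torus this is available through Fourier analysis, which is exactly why the corresponding statement is ``obvious in the compact case'' (as the paper remarks after Lemma~\ref{lem:spacesdense}); but in the stationary ergodic setting there is no spectral gap and no such solvability theory, and the hypotheses you list ($\D$ dense, $\partial_i\D \subset \D$, $\tau_x\D\subset\D$) do not give it to you. So the claim ``$\Pi_\sol\chi = \chi - \Pi_\pot\chi - \E\chi \in \D^d$'' is unsupported, and this is a genuine gap, not bookkeeping.

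The paper's proof avoids this entirely by working from the other direction: rather than smoothing and projecting a general $\psi\in\VV_\sol$, it constructs an explicit vector space $Y_\sol \subset \D^d$ of antisymmetrised second-order expressions (an exterior-derivative/curl-type construction; see \eqref{eq:defYsol}), checks directly by commuting partials that every element is divergence-free, and then shows density of $Y_\sol$ in $\VV_\sol$ by computing $(Y_\sol)^\perp$ and invoking Weyl's decomposition \eqref{eq:weyl} only at the very end---as an identification of orthogonal complements, never as a projection that needs to preserve regularity. That sidesteps exactly the solvability problem your approach runs into. Your first two steps (convolution gives $\psi_\epsilon \to \psi$ in $(L^2)^d$ with $\div\psi_\epsilon = 0$ a.s.\ via Lemma~\ref{lem:weakdivergence}; then approximate componentwise by $\D^d$) are fine but also unnecessary detours---once you abandon the projection step, they do not lead anywhere. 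If you want to repair the proof, you should look for an explicit parametrisation of (enough of) $\VV_\sol$ by objects built from $\D$ via derivatives, which is what the $Y_\sol$ device accomplishes.
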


\begin{proof}
We introduce the space $Y_\sol$ of vector fields $\phi$ of the form
\begin{align}\label{eq:defYsol}
(\phi_k)_{1\leq k \leq d}=\bigg(\sum_{i:\, i<k} (-1)^{i-k-1} \partial_i w_{ik} + \sum_{i:\, i>k} (-1)^{i-k} \partial_i w_{ki}\bigg)_{1 \leq k \leq d},
\end{align}
where $w_{ik} \in \D$, $1 \leq i,k \leq d$. Since the differential operator is linear and since $\D$ is a vector space, $Y_\sol$ is a vector space itself.
Moreover, $Y_\sol$ is a subspace of $\D^d$ as well as of $\VV_\sol$. The second follows from a direct calculation of the divergence of vector fields $\phi \in Y_\sol$: $\P$-a.s.\
\begin{align*}
\div \phi 
= \nabla \phi 
&= \sum_{(i,k):\, 1 \leq  i<k \leq d} \bigg((-1)^{i-k-1} \partial_{ki} w_{ik} + (-1)^{k-i} \partial_{ik} w_{ik}\bigg)=0,
\end{align*}
where we used that $\partial_i\partial_k w = \partial_k \partial_i w$ for $w$ differentiable of all orders.

We are now going to show that $Y_\sol$ in fact is dense in $\VV_\sol$ which together with the fact that $Y_\sol \subset \D^d$ then shows the statement.

Let $\psi \in (Y_\sol)^\perp$. Since for any $\phi \in Y_\sol$ and $x \in \R^d$ also $\phi \circ \tau_x \in Y_\sol$, we get for $\epsilon>0$,
\begin{align*}
\langle \phi, \psi_\epsilon \rangle
= \E\bigg[\phi \int \psi_\omega(x) \g_\epsilon(x) dx\bigg]
& = \int \E[\phi_\omega(-x) \psi_\omega] \g_\epsilon(x) dx 
= \int \langle \phi \circ \tau_{-x} , \psi \rangle \g_\epsilon(x) dx = 0
\end{align*}
where we used Fubini's theorem and translation invariance of $\P$. Thus, for $\phi \in Y_\sol$,
\begin{align*}
0=\langle \phi,\psi_\epsilon \rangle
&=  \sum_{(i,k):\, i<k} (-1)^{i-k} \langle w_{ik} , \partial_i \psi_{\epsilon,k} - \partial_k \psi_{\epsilon, i} \rangle.
\end{align*}
By denseness of $\D$ in $L^2$, for $i<k$ one has $\partial_i \psi_{\epsilon,k} - \partial_k \psi_{\epsilon,i} = 0$. Therefore, for $\epsilon >0$ $\P$-a.s.
\begin{align*}
(\rot\psi_\epsilon)_\omega(\varphi) 
= \left(\int \psi_{\epsilon,\omega,i} \partial_k\varphi - \psi_{\epsilon,\omega,k} \partial_i \varphi dx\right)_{i<k}
= \left(\int(\partial_i \psi_{\epsilon,\omega,k} - \partial_k \psi_{\epsilon,\omega,i})\varphi dx\right)_{i<k} 
=0
\end{align*}
for any $\varphi \in C_\compactc^\infty$. Using that $\P$-a.s.\ in $L^2_\loc$ one has $\psi_{\epsilon,\omega} \to \psi_\omega$, we get $\P$-a.s.\ 
$\rot\psi = 0$.
Hence, $(Y_\sol)^\perp \subset \VV_\pot\oplus \R^d$. Weyl's decomposition \eqref{eq:weyl} leads to
$\overline{Y_\sol}=((Y_\sol)^\perp)^\perp \supset (\VV_\pot\oplus\R^d)^\perp
 = \VV_\sol$,
where $\overline{Y_\sol}$ is the closure of $Y_\sol$ in $L^2$. Note that even $\VV_\sol = \overline{Y_\sol}$.
\end{proof}

\begin{proof}[Proof of Lemma \ref{lem:spacesdense}]
Let $f$ in $\D_\subw$ and set $f_M:=f1_{|f|\leq M}$ for $M>0$. For $n \in \N$ with the help of \eqref{eq:convolutionconverges} and \eqref{eq:derivativecommuteswithconv} choose $\epsilon=\epsilon(n)$ such that $\|f_\epsilon - f\|_2 \leq 1/n$ and $\sum_i \|\partial_if_\epsilon - \partial_i f\|_2 \leq 1/n$. Using Young's inequality \eqref{eq:youngs_inequality} and the fact that $\partial_i(f_M)_\epsilon - \partial_i f_\epsilon = (f- f_M) \ast\partial_i\g_\epsilon$, we are able to choose $M=M(\epsilon) = M(n)$ such that
$\|(f_M)_\epsilon - f_\epsilon\|_2 \leq 1/n$ and $\sum_i \|\partial_i(f_M)_\epsilon - \partial_i f_\epsilon\|_2 \leq 1/n$. Then with \eqref{eq:derivativeofconv_bdd} $((f_{M(n)})_{\epsilon(n)})_n \subset \D_\subs$ converges in $\| \cdot \|_\nabla$ to $f$ for $n \to \infty$.

Let $f \in \F_\subw$ with $f\geq c>0$ $\P$-a.s.. Define $\bar f := (f \wedge \|f\|_\infty) \vee c$. Then $\bar f_\epsilon \in \F_\subs$ and approximates $f$ in the desired way as before.

Note that $\Phi_y^\supw=y+\VV_\sol$ and $\Phi_y^\sups = y+(\D_\subs)^d \cap \VV_\sol$, see \eqref{eq:phi_characterization}. The last statement therefore follows from Lemma \ref{lem:D_dense_in_V_sol} and Lemma \ref{lem:testfunctions}.
\end{proof}

\noindent
\thanks{\textbf{Acknowledgement:}
This work was funded by the ERC Starting Grant 208417-NCIRW.\\
I am glad to express my thanks to my supervisor Prof.\ Dr.\ Martin P.\ W.\ Zerner for proposing me this very interesting problem, for profitable discussions and for his support.
I thank Prof.\ Dr.\ Elena Kosygina who substantially contributed to the accomplishment of this project with her profound knowledge of the subject.
Many thanks go to Dr.\ Elmar Teufl who always supported this work with valuable advices and discussions.
I am grateful having had the opportunity to talk to Prof.\ Dr.\ Rainer Sch\"atzle and Prof.\ Dr.\ S.\ R.\ Srinivasa Varadhan.}

\small
\bibliographystyle{amsalpha}

\def\cprime{$'$} \def\cprime{$'$}
\providecommand{\bysame}{\leavevmode\hbox to3em{\hrulefill}\thinspace}
\providecommand{\MR}{\relax\ifhmode\unskip\space\fi MR }
\providecommand{\MRhref}[2]{%
  \href{http://www.ams.org/mathscinet-getitem?mr=#1}{#2}
}
\providecommand{\href}[2]{#2}

\end{document}